\newtheorem{theorem}{Theorem}[section]
\newtheorem{Corol}[theorem]{Corollary}
\newtheorem{Lemm}[theorem]{Lemma}
\newcommand{\p}{\partial}
\newcommand{\R}{\mathbb{R}}
\newcommand{\C}{\mathbb{C}}
\newcommand{\abs}[1]{\left\vert#1\right\vert}
\newcommand{\set}[1]{\left\{#1\right\}}
\newcommand{\para}[1]{\left(#1\right)}
\begin{document}
\title[Wave equation with time-dependent coefficients]{Stable determination outside a cloaking region of two time-dependent coefficients in an hyperbolic equation from Dirichlet to Neumann map}
\author[M.~Bellassoued]{Mourad~Bellassoued}
\author[I.~Ben A\"{\i}cha]{Ibtissem Ben A\"{\i}cha}
\address{M.~Bellassoued.  University of Tunis El Manar, National Engineering School of Tunis, ENIT-LAMSIN, B.P. 37, 1002 Tunis, Tunisia}
\email{mourad.bellassoued@fsb.rnu.tn}
\address{I.~Ben A\"{\i}cha. University of Aix-Marseille, 58 boulevard Charles Livon, 13284 Marseille,  France. \& University of Carthage,
Faculty of Sciences of Bizerte, 7021 Jarzouna Bizerte, Tunisia.}
\email{ibtissembenaicha91@gmail.com} \maketitle
\begin{abstract}
In this paper, we treat the inverse problem of determining two
time-dependent coefficients appearing in a dissipative wave equation,
from measured Neumann boundary observations. We establish in dimension $n\geq2$, stability estimates with respect to the
Dirichlet-to-Neumann map of these coefficients  provided that are known outside a cloaking regions. Moreover, we prove that it can be stably recovered in larger subsets of the domain by enlarging
the set of
data.\\
\textbf{Keywords:} Inverse problems, Dissipative wave equation,
Time-dependent coefficients, Stability estimates.
\end{abstract}
\section{Introduction}
\subsection{Statement of the problem}
This paper deals with the inverse problem of determining two time-dependent coefficients in a dissipative wave equation from
boundary observations. Let $\Omega$ be a bounded domain of $\R^{n}$, $n\geq
2$, with $\mathcal{C}^{\infty}$ boundary $\Gamma=\p \Omega$. Given $T> 0$, we
introduce the following dissipative wave equation
\begin{equation}\label{EQ1} \left\{
  \begin{array}{ll}
    \p_{t}^{2}u-\Delta u +a(x,t)\p_{t}u+b(x,t)u=0\, & \mbox{in}\,\,\,Q=\Omega\times (0,T), \\
\\
   u(x,0)=u_{0}(x),\,\,\,\,\p_{t}u(x,0)=u_{1}(x)  & \mbox{in}\,\,\,\Omega, \\
\\
   u(x,t)=f(x,t)  & \mbox{on}\,\,\,\Sigma=\Gamma\times (0,T),
  \end{array}
\right.
\end{equation}
where $f\in H^{1}(\Sigma)$, $u_{0}\in H^{1}(\Omega)$, $u_{1}\in
L^{2}(\Omega)$, and the coefficients  $a\in\mathcal{C}^{2}({Q})$ and
$b\in\mathcal{C}^{1}({Q})$ are assumed to be real valued. It is well known
(see \cite{[R15]}) that if $f(\cdot,0)=u_{0|\Gamma}$, there exists a unique solution
$u$ to the equation (\ref{EQ1}) satisfying
$$u\in \mathcal{C}([0,T],H^{1}(\Omega))\cap \mathcal{C}^{1}([0,T],L^{2}(\Omega)).$$
Moreover, there exists $C>0$, such that
\begin{equation}\label{estim energie}
\|\p_{\nu}u\|_{L^{2}(\Sigma)}+\|u(\cdot,t)\|_{H^{1}(\Omega)}+\|\p_{t}u(\cdot,t)\|_{L^{2}(\Omega)}\leq C \para{\|f\|_{H^{1}(\Sigma)}+\|u_{0}\|_{H^{1}(\Omega)}
+\|u_{1}\|_{L^{2}(\Omega)}}.
\end{equation}
Here $\nu$ denotes the unit outward normal to $\Gamma$ at $x$ and $\p_{\nu}u$
stands for $\nabla u\cdot\nu$.

 In the present paper, we address the
uniqueness and the stability issues in the study of an inverse problem for
the dissipative wave equation (\ref{EQ1}), in the presence of an absorbing
coefficient $a$ and a potential $b$ that depend on both space and time
variables.  We consider three different sets of data and we aim to show that
$a$ and $b$ can be recovered in some specific subsets of the domain, by
probing it with disturbances generated on the boundary.  The Dirichlet data
$f$ is considered as a disturbance that is used to probe the medium which is
assumed to be quiet initially.
\medskip

The problem of identifying coefficients appearing in hyperbolic boundary
value problems was treated very well and there are many works that are
relevant to this topic.  In the case where the unknown coefficient is
depending only on the spatial variable, Rakesh and Symes \cite{[R23]} proved
by means of geometric optics solutions, a uniqueness result in recovering a
time-independent potential in a wave equation from global Neumann data. The
uniqueness by local Neumann data, was considered by Eskin \cite{[R13]} and
Isakov \cite{[R17]}.  In \cite{[R5]}, Bellassoued, Choulli and Yamamoto
proved a log-type stability estimate, in the case where the Neumann data are
observed on any arbitrary subset of the boundary.  Isakov and Sun
\cite{[R19]} proved that the knowledge of local Dirichlet-to-Neumann map
yields a stability result of H\"older type in determining a coefficient in a
subdomain. As for the stability obtained from global Neumann data, one can
see Sun \cite{[R30]}, Cipolatti and Lopez \cite{[R11]}. The case of
Riemannian manifold was considered  by  Bellassoued and Dos  Santos
Ferreira \cite{[R6]}, Stefanov and Uhlmann
 \cite{[R29]}.
 \medskip

All the mentioned papers are concerned only with time-independent coefficients.
 In the case where the coefficient is also depending on
the time variable, There is a uniqueness result proved by  Ramm and Rakesh
\cite{[R24]}, in which they showed that a time-dependent coefficient
appearing in a wave equation with zero initial conditions, can be uniquely
determined from the knowledge of global Neumann data, but only in a precise
subset of the cylindrical domain $Q$ that is  made of lines making an angle
of $45^{\circ}$ with the $t$-axis and meeting the planes $t=0$ and $t=T$
outside $\overline{Q}$.  However, inspired by the work of \cite{[R31]},
Isakov proved in \cite{[R18]}, that the time-dependent coefficient can be recovered from the responses of the medium for all
possible initial data,
over the whole domain $Q$.
\medskip

It is clear that with zero initial data, there is no hope to recover a
time-dependent coefficient appearing in a hyperbolic equation over the whole
cylindrical  domain, even from the knowledge of global Neumann data, because
 the value of the solution can be effected by the value of the
initial conditions, which is actually due  to a fundamental concept concerning
hyperbolic equations called the domain of dependence (see \cite{[R15]}). Moreover, we can prove that the  backward light-cone with base $\Omega$ is a cloaking region, that is we can not uniquely recover the coefficients in this region.
\medskip

As for uniqueness results, we have also the paper of Stefanov \cite{[R27]},
in which he proved that a time-dependent potential appearing in a wave
equation can be uniquely recovered  from scattering data and the paper of
Ramm and Sj\"{o}strand  \cite{[R25]}, in which they proved a uniqueness
result on an infinite time-space cylindrical domain $\Omega\times\R_{t}$.
\medskip

 The stability in this case, was considered by  Salazar \cite{[R26]}, who extended  the result of Ramm and Sj\"{o}strand \cite{[R25]}  to
more general coefficients and he established a stability result for compactly
supported coefficients provided $T$ is sufficiently large. We also refer to
the works of Kian \cite{[R20],[R21]} who followed techniques used by
Bellassoued,  Jellali and  Yamamoto \cite{[R7],[R8]}  and proved
uniqueness and a $\log$-$\log$ type stability estimate from the knowledge of
partial Neumann data. As for stability results from global Neumann data we
refer to  Ben A\"icha \cite{[R10]} who proved recently a stability of
$\log$-type in recovering a zeroth order time-dependent coefficient in
different regions of the cylindrical domain by considering different sets of
data. We  also refer to Waters \cite{[R33]} who derived, in Riemannian
case, conditional H\"older stability estimates for the X-ray transform of the
time-dependent potential appearing in the wave equation.
\medskip

As for results of hyperbolic inverse problems dealing with single measurement
data, one can see \cite{[R2],[R3],[R9],[R12],[R16],[R28]} and the references
therein.
\medskip

Inspired by the work of  Bellassoued \cite{[R4]} and  following the same
strategy as in  Ben A\"icha \cite{[R10]}, we  prove in this paper stability
estimates in the recovery of the unknown coefficients $a$ and $b$ via
different types of measurements and over different subsets of the domain $Q$.
\subsection{ Main results }
Before stating our main results we first introduce the following notations.

Let $r>0$ be such that $T>2r$ and $\Omega\subseteq B(0,r/2)=\Big\{x\in
\R^{n},\,|x|<r/2\Big\}$. We set $Q_{r}=B(0,r/2)\times(0,T)$ and we consider the annular
region around the domain $\Omega$,
$$\mathscr{A}_{r}=\left\{x\in\R ^{n},\,\,\displaystyle\frac{r}{2}<|x|<T-\displaystyle\frac{r}{2}\right\},$$
and the forward and backward light cone:
$$
\mathscr{C}_{r}^{+}=\left\{(x,t)\in Q_{r},\,\,|x|<t-\displaystyle\frac{r}{2},\,t>\frac{r}{2}\right\},$$
$$\mathscr{C}_{r}^{-}=\left\{(x,t)\in Q_{r},
\,\,|x|<T-\displaystyle\frac{r}{2}-t,\,T-\frac{r}{2}> t\right\}.$$ Finally,
we denote 
$$ Q_{r}^{*}=\mathscr{C}_{r}^{+}\cap \mathscr{C}_{r}^{-}
\qquad Q_{r,*}= Q\cap
Q_{r}^{*},\qquad \textrm{and}\,quad Q_{r,\sharp}=Q\cap
\mathscr{C}^{+}_{r}.$$

 We remark that the open subset $Q_{r,*}$ is made of
lines making an angle of $45^{\circ}$
 with the $t$-axis and meeting the planes $t=0$ and $t=T$ outside
$\overline{Q}_{r}$ and $Q_{r,\sharp}$ is made of lines making an angle of
$45°$ with the $t$-axis and meeting only the planes $t=0$ outside
$\overline{Q}_{r}$. We notice that $Q_{r,*}\subset Q_{r,\sharp}\subset Q$.
Note, that in the particular case where $\Omega=B(0,r/2),$ we have
$Q_{r,*}=Q_{r}^{*}$ and $Q_{r,\sharp}=\mathscr{C}^{+}_{r}$ (see Figure 1 in
\cite{[R10]}).

 In the present paper, we will prove stability estimates
for the inverse problem under consideration in three cases. We will consider
three different sets of data and we will prove that the coefficients $a$ and
$b$ can be stably determined in three different regions of the cylindrical
domain $Q$. Given $M_{1},\,M_{2}>0,$  we consider the set of admissible
coefficients $a$ and $b$:
$$\mathcal{A}(M_{1},M_{2})=\{(a,b)\in\mathcal{C}^{2}(\overline{Q}_{r})\times \mathcal{C}^{1}(\overline{Q}_{r});\,\,
\|a\|_{C^{2}(Q)}\leq M_{1},
\,\,\|b\|_{C^{1}(Q)} \leq M_{2}\}.$$
Finally, we define the following space
$$\mathcal{H}_{0}^{1}=\Big\{ f\in H^{1}(\Sigma),\,f(\cdot,0)=0\,\,\mbox{on}\,\,\Gamma \Big\},$$
equipped with the norm of $H^{1}(\Sigma).$
\subsubsection{Determination of coefficients from boundary measurements:}
 In the first case, we will assume that the initial conditions $(u_{0},u_{1})$ are zero and our set of data will be given only
 by boundary measurements \textit{enclosed} by the
  Dirichlet-to-Neumann map $\Lambda_{a,b}$ defined as follows
$$
\begin{array}{rrr}
 \Lambda_{a,b} : &\mathcal{H}_{0}^{1}(\Sigma)\longrightarrow& L^{2}(\Sigma)\\
           &f\longmapsto& \p_{\nu}u.
\end{array}
$$
 Note that from (\ref{estim energie}) we have $\Lambda_{a,b}$ is continuous from $\mathcal{H}_{0}^{1}(\Sigma)$
to $L^{2}(\Sigma)$. We denote by $\|\Lambda_{a,b}\|$ its norm in
$\mathcal{L}(\mathcal{H}_{0}^{1}(\Sigma),L^{2}(\Sigma))$. We can prove that it is hopeless to uniquely determine $a$ and $b$ in the case where these coefficients are supported in the cloaking region 
$$
\mathscr{C}=\set{(x,t)\in Q_r;\,\, \abs{x}<\frac{r}{2}-t,\,\, t<\frac{r}{2}}.
$$
The first result of this paper can be stated as follows
\begin{theorem}\label{Theorem1}
Let $T\!> \!2\!$ Diam$(\Omega)$. There exist constants $C>0$ and
$\mu_{1},\,\mu_{2}\in (0,1)$ such that we have
$$\|a_{2}-a_{1}\|_{L^{\infty}(Q_{r,*})}\leq
 C \Big( \|\Lambda_{a_{1},b_{1}}-\Lambda_{a_{2},b_{2}}\|^{\mu_{1}}+|\log \|\Lambda_{a_{1},b_{1}}-\Lambda_{a_{2},b_{2}}\||^{-1}
 \Big)^{\mu_{2}},$$
for any $(a_{i},b_{i})\in\mathcal{A}(M_{1},M_{2})$ such that
$\|a_{i}\|_{H^{p}(Q)}\leq M_{1} $, for  some $p>n/ 2 +3/2$,
$(a_{1},b_{1})=(a_{2},b_{2})$ in $\overline{Q}_{r}\setminus Q_{r,*}$ and
$(\nabla a_{1},\nabla b_{1})=(\nabla a_{2},\nabla b_{2})$ on $\p Q_{r}\cap\p
Q_{r,*}$. Here $C$ depends only on $\Omega$, $M_{1}$, $M_{2}$, $T$ and $n$.
\end{theorem}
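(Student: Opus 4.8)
The plan is to convert the estimate into the inversion of a light-ray transform, via geometric optics (GO) solutions. Write $a=a_2-a_1$, $b=b_2-b_1$. For a large parameter $\sigma>0$ and a direction $\omega\in\s^{n-1}$, I would use the linear phase $\psi(x,t)=x\cdot\omega-t$, which solves the eikonal equation $(\p_t\psi)^2=\abs{\nabla\psi}^2$, and construct a forward GO solution $u_2=e^{i\sigma\psi}\chi_2+r_2$ of \eqref{EQ1} for $(a_2,b_2)$ together with a GO solution $w=e^{i\sigma\psi}\chi_w+r_w$ of the backward adjoint equation attached to $(a_1,b_1)$. The amplitudes are fixed by cancelling the $O(\sigma)$ term in each equation, i.e. by the transport equations $(\p_t+\omega\cdot\nabla)\chi_2=-\tfrac12 a_2\chi_2$ and $(\p_t+\omega\cdot\nabla)\chi_w=\tfrac12 a_1\chi_w$ along the rays $s\mapsto(y+s\omega,s)$, starting from a common compactly supported transverse profile $\phi$. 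The remainders $r_2,r_w$ are then shown to be $o(1)$ in the norms governed by the energy estimate \eqref{estim energie}, which also supplies the well-posedness used throughout.

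I would then derive an integral identity. With $u_1$ the solution of \eqref{EQ1} for $(a_1,b_1)$ sharing the Dirichlet data of $u_2$, the difference $u=u_1-u_2$ satisfies $(\p_t^2-\Delta)u+a_1\p_t u+b_1 u=a\,\p_t u_2+b\,u_2$ with zero initial data and $\p_\nu u=(\Lambda_{a_1,b_1}-\Lambda_{a_2,b_2})f$ on $\Sigma$. Pairing with $w$ and integrating by parts gives
$$\int_Q\para{a\,\p_t u_2+b\,u_2}\,\overline{w}\,dx\,dt=\seq{(\Lambda_{a_1,b_1}-\Lambda_{a_2,b_2})f,\p_\nu w}_\Sigma+(\text{boundary terms}),$$
where the hypotheses $(a_1,b_1)=(a_2,b_2)$ off $Q_{r,*}$ and $(\nabla a_1,\nabla b_1)=(\nabla a_2,\nabla b_2)$ on $\p Q_r\cap\p Q_{r,*}$ ensure the zero-extension of $a,b$ is regular enough for the boundary contributions to vanish or be absorbed. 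Since $\p_t u_2\sim-i\sigma e^{i\sigma\psi}\chi_2$, the $a$-term dominates at order $\sigma$ while the $b$-term stays at order $1$; after the phases cancel, the transport equations give $\chi_2\overline{\chi_w}=\abs{\phi}^2\exp\!\para{\tfrac12\int(a_1-a_2)\,ds}$ along each ray, and the identity $a\,e^{\frac12\int(a_1-a_2)}=-2\tfrac{d}{ds}e^{\frac12\int(a_1-a_2)}$ integrates the leading contribution to $e^{\frac12\int_\ell(a_1-a_2)}-1$. Thus the order-$\sigma$ term encodes the light-ray transform $\int_\ell a\,ds$ along the speed-one rays sweeping out $Q_{r,*}$, the exponential being harmless under the a priori bound $M_1$. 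Balancing this against the $O(\sigma^{-1})$ remainder (carrying $b,\nabla a$) and the boundary pairing (at most $\norm{\Lambda_{a_1,b_1}-\Lambda_{a_2,b_2}}$ times an $O(\sigma)$ factor from the oscillation) and optimising in $\sigma$ yields a H\"older bound $\norm{\Lambda_{a_1,b_1}-\Lambda_{a_2,b_2}}^{\mu_1}$ for the light-ray transform of $a$, uniformly in $\omega$ and the offset.

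It remains to invert this transform. Letting $\omega\in\s^{n-1}$ and the base point vary, the ray integrals determine the Fourier transform $\widehat{a}(\xi,\tau)$ on the space-like cone $\abs{\tau}\le\abs{\xi}$ with the H\"older control above. Because $a$ is compactly supported in $x$ inside $B(0,r/2)$, $\widehat a$ is analytic in the spatial frequency, which lets me propagate the estimate into the time-like region $\abs{\tau}>\abs{\xi}$; truncating high frequencies by the a priori bound $\norm{a}_{H^p}\le M_1$ with $p>n/2+3/2$ (so that $H^p\hookrightarrow L^\infty$) and optimising the cut-off produces the logarithmic term $\abs{\log\norm{\Lambda_{a_1,b_1}-\Lambda_{a_2,b_2}}}^{-1}$. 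Adding the low-frequency H\"older term and the high-frequency logarithmic term gives the stated $L^\infty(Q_{r,*})$ estimate, with the two exponents $\mu_1,\mu_2$ emerging from the two optimisations.

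The main obstacle I anticipate is twofold. First, the GO construction must make $r_2,r_w$ genuinely negligible in the norms dictated by \eqref{estim energie}, and the boundary terms in the integral identity must be fully absorbed using the prescribed jets of $a,b$ across $\p Q_{r,*}$; this is delicate precisely because of the first-order dissipative term $a\,\p_t u$, which both produces the order-$\sigma$ principal contribution and complicates the adjoint. Second, and most seriously, the inversion of the light-ray transform with the correct H\"older-plus-logarithmic stability --- in particular the passage from the space-like to the time-like frequency cone using compact support and the $H^p$ bound --- is the technical heart of the argument and the origin of the composite modulus.
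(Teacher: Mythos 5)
Your proposal follows essentially the same route as the paper: geometric optics solutions with linear phase and amplitudes solving the transport equation twisted by $\pm\frac12 a_i$, an integral identity pairing the difference of solutions with the backward adjoint GO solution, extraction of the exponentiated light-ray transform $\exp\big(-\frac12\int a\,ds\big)-1$ from the $O(\lambda)$ term, concentration of the profile to bound $\mathcal{R}(a)$ pointwise, passage to $\widehat a$ on the cone $|\tau|<|\xi|$, analytic continuation of $\widehat a$ into the full frequency ball, and a low/high frequency split plus interpolation with the $H^{p}$ a priori bound to reach $L^{\infty}(Q_{r,*})$. The two obstacles you flag (control of the remainders and boundary terms, and the quantitative analytic continuation) are precisely the technical content of the paper's Lemmas 3.1--3.4, so the plan is correct and matches the paper's proof.
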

The above statement claims stable determination of the time-dependent
coefficient $a$ from the Neumann boundary measurements $\Lambda_{a,b}$ in
$Q_{r,*}\!\subset\! Q$, provided $a$ is known outside $Q_{r,*}$. By Theorem
\ref{Theorem1}, we can readily derive the following result
\begin{theorem}\label{Theorem2}
Let $T\!> \!2\!$ Diam$(\Omega)$. There exist two constants $C>0$ and $\mu\in
(0,1)$ such that we have
$$\|b_{2}-b_{1}\|_{H^{-1}(Q_{r,*})}\leq C \Big( \log |\log \|\Lambda_{a_{2},b_{2}}-\Lambda_{a_{1},b_{1}}\||^{\mu} \Big)^{-1}, $$
for any $(a_{i},b_{i})\in \mathcal{A}(M_{1},M_{2})$ such that
$\|a_{i}\|_{H^{p}(Q)}\leq M_{1}$, for some $p>n/2+3/2$,
$(a_{1},b_{1})=(a_{2},b_{2})$ in $\overline{Q}_{r}\setminus Q_{r,*}$ and
$(\nabla a_{1},\nabla b_{1})=(\nabla a_{2},\nabla b_{2})$ on $\p Q_{r}\cap\p
Q_{r,*}$. Here $C$ depends only on $\Omega$, $M_{1}$, $M_{2}$, $T$ and $n$.
\end{theorem}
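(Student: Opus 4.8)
The plan is to deduce Theorem~\ref{Theorem2} from Theorem~\ref{Theorem1} by reopening the geometric optics argument behind the latter and reading off, one order lower in the large parameter, the contribution of the zeroth order coefficient. Put $a=a_1-a_2$ and $b=b_1-b_2$; by hypothesis both vanish in $\overline{Q}_r\setminus Q_{r,*}$ and their gradients agree on $\p Q_r\cap\p Q_{r,*}$, so $b$ extends by zero to a function supported in $\overline{Q}_{r,*}$. Let $u_1$ be a geometric optics solution of $\mathcal{L}_{a_1,b_1}u_1=0$, where $\mathcal{L}_{a,b}=\p_t^2-\Delta+a\,\p_t+b$, and let $u_2$ be a geometric optics solution of the backward adjoint equation associated with $(a_2,b_2)$, both of the form $u_j=e^{i\lambda\varphi}A_j+R_j$ with the common null phase $\varphi(x,t)=x\cdot\omega-t$, $\omega\in\s^{n-1}$, where the amplitudes $A_j$ carry the exponential weight produced by the first transport equation and the remainders satisfy $\|R_j\|=O(\lambda^{-1})$ in the energy norm --- these are exactly the solutions constructed for Theorem~\ref{Theorem1}. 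Multiplying $\mathcal{L}_{a_1,b_1}u_1=0$ by $\overline{u_2}$, integrating over $Q$ and using \eqref{estim energie} yields the basic identity
\begin{equation}\label{id-thm2}
\int_{Q}\big(a\,\p_t u_1+b\,u_1\big)\overline{u_2}\,dx\,dt=\int_{\Sigma}\big(\Lambda_{a_1,b_1}-\Lambda_{a_2,b_2}\big)f\,\overline{u_2}\,d\sigma\,dt,
\end{equation}
whose right-hand side is bounded by $\|\Lambda_{a_1,b_1}-\Lambda_{a_2,b_2}\|$ times the boundary norms of the two geometric optics solutions.

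Next I would insert the ansatz into \eqref{id-thm2} and expand in powers of $\lambda$. Because the two phases cancel, the product $\p_t u_1\,\overline{u_2}$ contributes $i\lambda\int_Q a\,A_1\overline{A_2}$ at order $\lambda$ --- the light-ray transform of $a$ driving Theorem~\ref{Theorem1} --- while $b\,u_1\overline{u_2}$ contributes $\int_Q b\,A_1\overline{A_2}$ at order $\lambda^0$. The point is to isolate this $O(1)$ term. Dividing \eqref{id-thm2} by the appropriate power of $\lambda$ and collecting terms, every factor involving $a$ --- the $O(1)$ part of the $a$-term together with all cross terms with the remainders --- can be transferred to the right-hand side and estimated by the $L^\infty(Q_{r,*})$ bound already provided by Theorem~\ref{Theorem1}, since $A_1\overline{A_2}$ is smooth and $\lambda$-uniformly bounded after normalization. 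Choosing the transverse profile of the amplitudes so that the leading integral reproduces $\widehat{b}(\zeta)$ through the Fourier-slice relation for the light-ray transform, one obtains, for $\zeta=(\xi,\tau)$ ranging over the spacelike frequencies $|\tau|\le|\xi|$ seen by the $45^{\circ}$ rays contained in $Q_{r,*}$, an estimate of the form
\begin{equation}\label{fourier-b}
\big|\widehat{b}(\zeta)\big|\leq C\,e^{C|\zeta|}\big(\|\Lambda_{a_1,b_1}-\Lambda_{a_2,b_2}\|+\varepsilon_1\big)+C\,|\zeta|^{-1},
\end{equation}
where $\varepsilon_1$ denotes the right-hand side of the Theorem~\ref{Theorem1} estimate, the factor $e^{C|\zeta|}$ records the stability of the analytic continuation of $\widehat{b}$ (legitimate since $b$ is compactly supported) needed to reach the timelike cone $|\tau|>|\xi|$ not directly covered by the rays, and $C|\zeta|^{-1}$ is the residual geometric optics remainder.

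Then I would reconstruct $b$ from \eqref{fourier-b}, and here the choice of the $H^{-1}$ norm is what lets the estimate close. Writing $\|b\|_{H^{-1}(Q_{r,*})}^2=\int\langle\zeta\rangle^{-2}|\widehat{b}(\zeta)|^2\,d\zeta$ and splitting at a cutoff $|\zeta|=\rho$, the high-frequency tail is bounded by $\rho^{-1}\|b\|_{L^2}^2\leq C\rho^{-1}M_2^2$ using only the a priori $C^1$ bound, while the low-frequency part is controlled through \eqref{fourier-b} by $C\rho^{N}e^{2C\rho}\big(\|\Lambda_{a_1,b_1}-\Lambda_{a_2,b_2}\|+\varepsilon_1\big)^2$ for some power $N$. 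Since $\varepsilon_1$ dominates the raw $\|\Lambda_{a_1,b_1}-\Lambda_{a_2,b_2}\|$ as the latter tends to $0$, optimising the balance $e^{C\rho}\varepsilon_1\sim\rho^{-1/2}$ forces $\rho\sim\log(1/\varepsilon_1)$ and gives $\|b\|_{H^{-1}(Q_{r,*})}\lesssim\big(\log(1/\varepsilon_1)\big)^{-1/2}$. Substituting $\varepsilon_1\sim|\log\|\Lambda_{a_1,b_1}-\Lambda_{a_2,b_2}\||^{-\mu_2}$ then gives $\log(1/\varepsilon_1)\sim\log|\log\|\Lambda_{a_2,b_2}-\Lambda_{a_1,b_1}\||$, which is precisely the announced doubly logarithmic modulus of continuity.

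The main obstacle is the clean isolation, in \eqref{id-thm2}, of the $O(1)$ contribution of $b$ from that of $a$: the amplitudes $A_j$ themselves encode $a_1$ and $a_2$ through the transport equations, so the full $a$-dependence must be removed using Theorem~\ref{Theorem1} without destroying the $\lambda$-uniform control of $A_1\overline{A_2}$ and of the remainders $R_j$. It is this step that makes $\varepsilon_1$, rather than $\|\Lambda_{a_1,b_1}-\Lambda_{a_2,b_2}\|$ itself, the effective noise level for $b$, and hence converts the single logarithm of Theorem~\ref{Theorem1} into the double logarithm of Theorem~\ref{Theorem2}. Managing this subtraction simultaneously with the competing $e^{C\rho}$ growth of the continued data term and the $\rho^{-1}$ a priori tail, in one optimisation, is the delicate quantitative heart of the argument.
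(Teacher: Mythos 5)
Your proposal follows essentially the same route as the paper: starting from the same Green's-formula identity, you isolate the $O(1)$ contribution of $b$ while pushing all $a$-dependent terms (including the deviation of the amplitude product from $1$) to the right-hand side where they are controlled by the $L^\infty$ bound of Theorem \ref{Theorem1}; you then pass through the light-ray transform and its Fourier-slice identity on the spacelike cone, extend by quantitative analytic continuation, and split the $H^{-1}$ norm at a frequency cutoff $\rho\sim\log(1/\varepsilon_1)$ with $\varepsilon_1$ the Theorem \ref{Theorem1} modulus, which is exactly the mechanism (Lemmas \ref{Lemma3.5}--\ref{Lemma3.7} and the final optimization) that converts the single logarithm into the double logarithm in the paper. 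The argument is correct and matches the paper's proof in all essential respects.
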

This mentioned result shows that the time-dependent potential $b$ can also be
stably determined, from the knowledge of the boundary measurements
$\Lambda_{a,b}$ in the same subset $Q_{r,*}\!\subset\! Q$, provided it is
known outside this region. As a consequence, we have the following existence
result
\begin{Corol} Under
the same assumptions of Theorem \ref{Theorem1} and Theorem \ref{Theorem2}, we
have that $\Lambda_{a_{1},b_{1}}=\Lambda_{a_{2},b_{2}}$
 implies  $ a_{1}=a_{2}$  and $b_{1}=b_{2}$ in $Q_{r,*} $.
\end{Corol}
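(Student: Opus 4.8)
The plan is to obtain this uniqueness statement as an immediate soft consequence of the two quantitative stability estimates, by specializing them to the situation in which the two Dirichlet-to-Neumann maps coincide. If $\Lambda_{a_{1},b_{1}}=\Lambda_{a_{2},b_{2}}$, then $\Lambda_{a_{1},b_{1}}-\Lambda_{a_{2},b_{2}}$ is the zero operator in $\mathcal{L}(\mathcal{H}_{0}^{1}(\Sigma),L^{2}(\Sigma))$, so that $\omega:=\|\Lambda_{a_{1},b_{1}}-\Lambda_{a_{2},b_{2}}\|=0$. The whole argument then reduces to checking that the right-hand sides of the two estimates vanish as $\omega\to 0^{+}$.

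First I would invoke Theorem \ref{Theorem1}. Its right-hand side is $C\big(\omega^{\mu_{1}}+|\log\omega|^{-1}\big)^{\mu_{2}}$ with $\mu_{1},\mu_{2}\in(0,1)$. As $\omega\to 0^{+}$ we have $\omega^{\mu_{1}}\to 0$ and $|\log\omega|^{-1}\to 0$, since $|\log\omega|\to+\infty$; adopting the natural convention $|\log 0|^{-1}=0$ justified by this limit, the bound equals $0$ at $\omega=0$. As the left-hand side $\|a_{2}-a_{1}\|_{L^{\infty}(Q_{r,*})}$ is a fixed nonnegative number, it must vanish, which yields $a_{1}=a_{2}$ in $Q_{r,*}$.

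Next I would apply Theorem \ref{Theorem2} in exactly the same spirit. Its right-hand side $C\big(\log|\log\omega|^{\mu}\big)^{-1}$ likewise tends to $0$ as $\omega\to 0^{+}$: indeed $|\log\omega|\to+\infty$ forces $|\log\omega|^{\mu}\to+\infty$, hence $\log|\log\omega|^{\mu}\to+\infty$ and its reciprocal tends to $0$. Therefore $\|b_{2}-b_{1}\|_{H^{-1}(Q_{r,*})}=0$, so that $b_{1}=b_{2}$ as elements of $H^{-1}(Q_{r,*})$, i.e. as distributions on $Q_{r,*}$. Because $b_{1},b_{2}\in\mathcal{C}^{1}(\overline{Q}_{r})$, this distributional identity upgrades to the pointwise equality $b_{1}=b_{2}$ in $Q_{r,*}$.

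I do not expect any genuine obstacle, since the corollary is purely a logical by-product of the stability estimates already established. The only point deserving a line of care is the interpretation of the logarithmic moduli of continuity at $\omega=0$, which is settled by the monotonicity of each bound together with the limiting values computed above, rather than by a naive substitution that would produce $\log 0$.
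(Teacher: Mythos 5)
Your argument is correct and is exactly the intended reading: the paper offers no separate proof of this corollary, presenting it as an immediate consequence of Theorems \ref{Theorem1} and \ref{Theorem2}, which is precisely what you do by specializing the two stability estimates to $\|\Lambda_{a_{1},b_{1}}-\Lambda_{a_{2},b_{2}}\|=0$. The only point I would tighten is the ``convention'' $|\log 0|^{-1}=0$: rather than a convention, one should note that when the measurement gap vanishes the intermediate bounds in the proofs (e.g.\ $|\widehat{a}(\xi,\tau)|\leq C\lambda^{-\beta}$ from Lemma \ref{Lemma3.3} for all $\lambda\geq\lambda_{0}$, and the analogous bound for $\widehat{b}$) hold for every large parameter, so letting $\lambda\to\infty$ gives $\widehat{a}=\widehat{b}=0$ on $E$ and hence $a=b=0$ --- which makes your limiting argument rigorous without redefining the right-hand sides at zero.
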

\subsubsection{Determination of coefficients from boundary measurements and final data: } In order to extend the above results to a
larger region $Q_{r,\sharp}\!\supset\!Q_{r,*}$, we require more information
about the solution $u$ of the wave equation \ref{EQ1}. So, in this case we
will add the final data of the solution $u$. This leads to defining the
following  boundary operator (response operator):
$$\begin{array}{rrr}
\mathscr{R}_{a,b}:\mathcal{H}_{0}^{1}(\Sigma)&\longrightarrow&\mathcal{K}:=L^{2}(\Sigma)\times H^{1}(\Omega)\times L^{2}(\Omega)\\
f&\longmapsto&(\p_{\nu}u,u(\cdot,T),\p_{t}u(\cdot,T)).
\end{array}$$
We conclude from (\ref{estim energie}), that $\mathscr{R}_{a,b}$ is a
continuous operator form $\mathcal{H}^{1}_{0}(\Sigma)$ to $\mathcal{K}$. We
denote by $\|\mathscr{R}_{a,b}\|$ its norm in
$\mathcal{L}(\mathcal{H}_{0}^{1}(\Sigma),\mathcal{K}).$
\begin{theorem}\label{Theorem3}
Let $T\!> \!2\!$ Diam$(\Omega)$. There exist constants $C>0$ and
$\mu_{1},\,\mu_{2}\in (0,1)$ such that we have
$$\|a_{2}-a_{1}\|_{L^{\infty}(Q_{r,\sharp})}\leq
 C \Big( \|\mathscr{R}_{a_{1},b_{1}}-\mathscr{R}_{a_{2},b_{2}}\|^{\mu_{1}}+
 |\log \|\mathscr{R}_{a_{1},b_{1}}-\mathscr{R}_{a_{2},b_{2}}\||^{-1} \Big)^{\mu_{2}},$$
for any $(a_{i},b_{i})\in\mathcal{A}(M_{1},M_{2})$ such that
$\|a_{i}\|_{H^{p}(Q)}\leq M_{1}$, for some $p>n/2+3/2$,
$(a_{1},b_{1})=(a_{2},b_{2})$ in $\overline{Q}_{r}\setminus Q_{r,\sharp}$ and
$(\nabla a_{1},\nabla b_{1})=(\nabla a_{2},\nabla  b_{2})$ on $\p Q_{r}\cap\p
Q_{r,\sharp}$. Here $C$ depends only on $\Omega$, $M_{1}$, $M_{2}$, $T$ and
$n$.
\end{theorem}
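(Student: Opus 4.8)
The plan is to run the geometric-optics scheme behind Theorem \ref{Theorem1} against the response operator $\mathscr{R}_{a,b}$, the only genuinely new feature being that the integration by parts now retains the contributions carried at the final time $t=T$, which are precisely the data added in passing from $\Lambda_{a,b}$ to $\mathscr{R}_{a,b}$. Write $P_{a,b}=\p_t^2-\Delta+a\p_t+b$ and set $\tilde a=a_2-a_1$, $\tilde b=b_2-b_1$, both supported in $Q_{r,\sharp}$ by hypothesis. For $f\in\mathcal H_0^1(\Sigma)$ let $u_i$ solve $P_{a_i,b_i}u_i=0$ with zero initial data and $u_i|_\Sigma=f$, and put $w=u_2-u_1$, so that $w$ has vanishing Cauchy data at $t=0$, vanishes on $\Sigma$, and solves $P_{a_2,b_2}w=-(\tilde a\,\p_t u_1+\tilde b\,u_1)$. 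Pairing with any solution $v$ of the adjoint equation $P_{a_2,b_2}^{*}v:=\p_t^2 v-\Delta v-\p_t(a_2 v)+b_2 v=0$ and integrating by parts over $Q$ yields an identity of the form
\[
\int_Q(\tilde a\,\p_t u_1+\tilde b\,u_1)\,v\,dx\,dt
=\int_\Sigma \p_\nu w\,v\,d\sigma\,dt-\int_\Omega\big[\p_t w\,v-w\,\p_t v+a_2\, w\,v\big]_{t=T}\,dx .
\]
Every boundary quantity on the right—namely $\p_\nu w=(\Lambda_{a_2,b_2}-\Lambda_{a_1,b_1})f$ together with $w(\cdot,T)$ and $\p_t w(\cdot,T)$—is exactly a component of $\mathscr{R}_{a_2,b_2}-\mathscr{R}_{a_1,b_1}$, so by Cauchy--Schwarz and the energy estimate (\ref{estim energie}) the right-hand side is bounded by $C\,\|\mathscr{R}_{a_1,b_1}-\mathscr{R}_{a_2,b_2}\|$ times the relevant norms of $f=u_1|_\Sigma$ and of $v$. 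In Theorem \ref{Theorem1} the geometry of $Q_{r,*}$ forces $v(\cdot,T)=\p_t v(\cdot,T)=0$, so the $t=T$ bracket drops and $\Lambda_{a,b}$ alone suffices; here we keep it and pay with the extra data.

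Next I would insert geometric-optics solutions concentrated along a forward light ray. Fix $\omega\in\s^{n-1}$ and the phase $\psi(x,t)=x\cdot\omega-t$, which solves the eikonal equation $(\p_t\psi)^2=|\nabla\psi|^2$ for the wave operator and is constant along the lines $s\mapsto(y+s\omega,\tau+s)$. Construct $u_1=e^{i\rho\psi}(\Phi+\rho^{-1}\cdots)+r_1$ solving $P_{a_1,b_1}u_1=0$ and $v=e^{-i\rho\psi}(\Psi+\rho^{-1}\cdots)+r_2$ solving $P_{a_2,b_2}^{*}v=0$, where the principal amplitudes $\Phi,\Psi$ solve the associated transport ODEs along the ray and carry a cut-off localizing to a chosen line, and the remainders obey $\|r_i\|=O(\rho^{-1})$ in the energy norm via (\ref{estim energie}). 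The two phases are conjugate, so in the product $\p_t u_1\cdot v$ the oscillations cancel and the leading contribution is of order $\rho$, whereas the $\tilde b\,u_1 v$ term stays of order one.

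Substituting into the identity, dividing by $\rho$ and letting $\rho\to\infty$, the principal part extracts the light-ray transform $\int_{\R}\tilde a(y+s\omega,\,s+\tau)\,ds$ of $\tilde a$ along forward light rays, and the retained $t=T$ terms are exactly what allows rays leaving $Q_r$ through the top face $\Omega\times\{T\}$—that is, the rays sweeping out $Q_{r,\sharp}\setminus Q_{r,*}$. Since $\tilde a$ is supported in $Q_{r,\sharp}$ and matches $a$ together with its gradient on $\p Q_r\cap\p Q_{r,\sharp}$, I can extend it and apply the stability of the light-ray transform (as in Bellassoued \cite{[R4]} and Ben A\"icha \cite{[R10]}) to bound $\|\tilde a\|_{L^\infty(Q_{r,\sharp})}$ by the transform data; the Sobolev hypothesis $p>n/2+3/2$ is precisely what upgrades the $L^2$ control of the amplitudes and remainders to $L^\infty$ through Sobolev embedding. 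Balancing the remainder error ($\sim\rho^{-1}$) against the measurement error (which carries a factor $\rho$ from $\p_t u_1$ and $v$, i.e.\ $\sim\rho\,\|\mathscr{R}_{a_1,b_1}-\mathscr{R}_{a_2,b_2}\|$) and optimizing over $\rho$ produces the stated power--logarithmic modulus.

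The main obstacle is the bookkeeping around the final time. One must check that the $t=T$ bracket is genuinely dominated by the $H^1(\Omega)\times L^2(\Omega)$ components of $\mathscr{R}_{a,b}$ (this is where the trace regularity of $v(\cdot,T)$ and the energy estimate enter) and, more importantly, that admitting these rays enlarges the reconstructed region to exactly $Q_{r,\sharp}$ and no further—i.e.\ that the light-ray transform restricted to forward rays meeting $t=0$ outside $\overline Q_r$ stays stably invertible on $Q_{r,\sharp}$. The condition $T>2\,\mathrm{Diam}(\Omega)$ guarantees a sufficient family of such rays, and establishing this restricted stability uniformly in the cut-offs used to build $\Phi,\Psi$ is the technically delicate step.
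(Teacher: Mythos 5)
Your proposal follows essentially the same route as the paper: the genuinely new content of Theorem \ref{Theorem3} is precisely the Green's identity retaining the $t=T$ boundary terms (the paper's (\ref{EQ4.43})), their identification with the components $\mathscr{R}^{2}_{a,b},\mathscr{R}^{3}_{a,b}$, and the relaxation of the support condition on $\varphi$ from $\mathrm{supp}\,\varphi\cap\Omega=\emptyset$ \emph{and} $(\mathrm{supp}\,\varphi\pm T\omega)\cap\Omega=\emptyset$ to $\mathrm{supp}\,\varphi\cap\Omega=\emptyset$ alone, which is exactly what enlarges the reconstructed region to $Q_{r,\sharp}$; after that the paper reruns the Theorem \ref{Theorem1} machinery verbatim, as you propose. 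The one imprecision is your closing remark on balancing: optimizing the large parameter only yields a power-type bound for the (exponentiated) light-ray transform, and the logarithmic part of the modulus in fact comes from the analytic continuation of $\widehat{a}$ off the cone $\{|\tau|<|\xi|\}$ via Lemma \ref{Lemma3.4}, not from the $\rho$-optimization.
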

From the above statement, we can readily derive the following consequence
\begin{theorem}\label{Theorem4}
Let $T\!> \!2\!$ Diam$(\Omega)$. There exist two constants $C>0$ and
$\mu\in (0,1)$ such that we have
$$\|b_{2}-b_{1}\|_{H^{-1}(Q_{r,\sharp})}\leq C \Big( \log |\log \|\mathscr{R}_{a_{2},b_{2}}-\mathscr{R}_{a_{1},b_{1}}\||^{\mu} \Big)^{-1}, $$
for any $(a_{i},b_{i})\in\mathcal{A}(M_{1},M_{2})$ such that
$\|a_{i}\|_{H^{p}(Q)}\leq M_{1}$, for some $p>n/2+3/2$,
$(a_{1},b_{1})=(a_{2},b_{2})$ in $\overline{Q}_{r}\setminus Q_{r,\sharp}$ and
$(\nabla a_{1},\nabla b_{1})=(\nabla a_{2},\nabla b_{2})$ on $\p Q_{r}\cap\p
Q_{r,\sharp}$. Here $C$ depends only on $\Omega$, $M_{1}$, $M_{2}$, $T$ and
$n$.
\end{theorem}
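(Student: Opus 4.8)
The plan is to deduce Theorem \ref{Theorem4} from Theorem \ref{Theorem3}, reusing the integral identity and the geometric optics (GO) solutions behind Theorem \ref{Theorem3} but reading off the zeroth order (in the large parameter $\lambda$) contribution, which carries $b$, rather than the leading first order one, which carries $a$. Let $u_i$ solve (\ref{EQ1}) with coefficients $(a_i,b_i)$, zero initial data and Dirichlet data $f$, and set $w=u_1-u_2$, so that $\p_t^2 w-\Delta w+a_1\p_t w+b_1 w=(a_2-a_1)\p_t u_2+(b_2-b_1)u_2$ with vanishing Cauchy and lateral data. Multiplying by a solution $v$ of the adjoint equation associated with $(a_2,b_2)$ and integrating by parts over $Q_r$, I get an identity $\int_{Q_{r,\sharp}}\big[(a_2-a_1)\p_t u_2+(b_2-b_1)u_2\big]\,\overline{v}\,dx\,dt=\langle(\mathscr{R}_{a_1,b_1}-\mathscr{R}_{a_2,b_2})f,\,(\p_\nu v,\,v(\cdot,T),\,\p_t v(\cdot,T))\rangle$, where the supports of $a_2-a_1$ and $b_2-b_1$ in $\overline{Q}_{r,\sharp}$, together with the matching of gradients on $\p Q_r\cap\p Q_{r,\sharp}$, kill all spurious boundary terms. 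Contrary to the situation of Theorem \ref{Theorem2}, the light rays generating $Q_{r,\sharp}$ meet $\{t=0\}$ but not $\{t=T\}$ outside $\overline{Q}_r$, so the GO solutions cannot be taken to vanish near $t=T$; this is exactly why the final traces $u(\cdot,T),\p_t u(\cdot,T)$ enter, i.e. why the response operator $\mathscr{R}_{a,b}$ rather than $\Lambda_{a,b}$ is needed.

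Next I insert the GO solutions from the proof of Theorem \ref{Theorem3}, namely $u_2=e^{i\lambda\varphi}(\alpha_0+\lambda^{-1}\alpha_1+\cdots)+R_\lambda$ and $v=e^{-i\lambda\varphi}(\beta_0+\cdots)+\tilde{R}_\lambda$, with null phase $\varphi=x\cdot\omega-t$, $\omega\in\s^{n-1}$; the amplitudes $\alpha_0,\beta_0$ solve transport equations along the ray $s\mapsto(x+s\omega,s)$ and depend on $a$ through an attenuation factor, while the remainders are $O(\lambda^{-1})$ by (\ref{estim energie}). Since $\p_t u_2=-i\lambda e^{i\lambda\varphi}\alpha_0+O(1)$, the identity becomes $-i\lambda\int(a_2-a_1)\alpha_0\overline{\beta_0}\,dx\,dt+\int(b_2-b_1)\alpha_0\overline{\beta_0}\,dx\,dt+O(\lambda^{-1})=\langle(\mathscr{R}_{a_1,b_1}-\mathscr{R}_{a_2,b_2})f,\cdots\rangle$. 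Whereas Theorem \ref{Theorem3} keeps the leading $\lambda$ term to recover $a$, I move it to the right-hand side and estimate it by the already established $L^\infty$ bound, $\lambda\,|\int(a_2-a_1)\alpha_0\overline{\beta_0}|\le C\lambda\,\|a_2-a_1\|_{L^\infty(Q_{r,\sharp})}$. Inserting in the amplitudes an additional plane wave $e^{-i\zeta\cdot(x,t)}$ with $\zeta$ in the space-like cone $\{|\tau|\le|\xi|\}$ and $|\zeta|\lesssim\lambda$, and using $\|f\|_{H^1(\Sigma)}\lesssim\lambda$, I obtain for such $\zeta$ a bound on the Fourier transform of the zero-extended difference, $|\widehat{(b_2-b_1)}(\zeta)|\le C\big(\lambda^{2}\varepsilon+\lambda\,\delta_a+\lambda^{-1}\big)$, where $\varepsilon:=\|\mathscr{R}_{a_1,b_1}-\mathscr{R}_{a_2,b_2}\|$ and $\delta_a:=\|a_2-a_1\|_{L^\infty(Q_{r,\sharp})}$; this is the light-ray transform of $b_2-b_1$, read frequency by frequency.

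Then I combine with Theorem \ref{Theorem3}, which gives $\delta_a\le C(\varepsilon^{\mu_1}+|\log\varepsilon|^{-1})^{\mu_2}$, only logarithmically small in $\varepsilon$. Since $\varepsilon$ is smaller than any power of $|\log\varepsilon|^{-1}$, the term $\lambda^{2}\varepsilon$ is negligible at the relevant scales, and balancing $\lambda\delta_a$ against $\lambda^{-1}$ (so $\lambda\sim\delta_a^{-1/2}$) controls the light-ray transform of $b_2-b_1$ by a quantity $\beta\le C\delta_a^{1/2}$, again of logarithmic size. It remains to invert the light-ray transform. As $b_2-b_1$ is supported in $\overline{Q}_{r,\sharp}$, its Fourier transform is entire of exponential type and globally bounded in terms of the a priori bound $M_2$; the transform pins down $\widehat{(b_2-b_1)}$ only on the space-like cone, and the time-like frequencies must be recovered by analytic continuation. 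Because reaching arbitrarily high frequencies (needed to bound the $H^{-1}$ tail) forces one to balance this exponential type against the global bound, the continuation is only logarithmically stable, so the bound $\beta$ on the space-like cone turns into a bound of size $C(\log(1/\beta))^{-\kappa}$ on a full frequency ball; splitting low and high frequencies and optimizing its radius then yields $\|b_2-b_1\|_{H^{-1}(Q_{r,\sharp})}\le C(\log(1/\beta))^{-\kappa}$. Since $\beta\sim\delta_a^{1/2}$ is logarithmic in $\varepsilon$, one has $\log(1/\beta)\sim\log|\log\varepsilon|$, which is exactly the announced double-logarithmic rate $C(\log|\log\varepsilon|^{\mu})^{-1}$.

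The hard part will be this last inversion step, namely passing from control of $\widehat{(b_2-b_1)}$ on the space-like cone to control of $\|b_2-b_1\|_{H^{-1}}$: this is where the second logarithm is lost, and the delicate points are to carry out the frequency continuation quantitatively (controlling the exponential type, governed by $\mathrm{Diam}(\Omega)$ and $T$, against the global $\mathcal{C}^1$ bound) and to optimize simultaneously the GO parameter $\lambda$ and the frequency cut-off, so that the merely logarithmic input $\delta_a$ coming from Theorem \ref{Theorem3} produces precisely a double-logarithmic modulus of continuity. Everything else — the integral identity, the pairing with the final data, and the GO construction with its transport equations and remainder estimates — is inherited, up to notation, from the proof of Theorem \ref{Theorem3}.
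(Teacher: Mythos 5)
Your proposal is correct and follows essentially the same route as the paper: the integral identity paired against the final data through $\mathscr{R}_{a,b}$ (the paper's Lemma \ref{Lemma4.2}), isolating the zeroth-order term carrying $b$ while absorbing the $O(\lambda)$ term into $\lambda\|a_2-a_1\|_{L^\infty(Q_{r,\sharp})}$ controlled by Theorem \ref{Theorem3}, a bound for the Fourier transform of $b_2-b_1$ on the space-like cone, analytic continuation via Lemma \ref{Lemma3.4}, and a low/high frequency splitting of the $H^{-1}$ norm yielding the double-logarithmic rate. The only cosmetic differences are that the paper extracts the light-ray transform pointwise with concentrated bumps $\varphi_h$ before Fourier transforming (rather than modulating the amplitude by a plane wave) and optimizes $\lambda$ jointly with the frequency cut-off $\alpha$ at the very end; neither changes the substance.
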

As a consequence, we have the following uniqueness result
\begin{Corol}
Under the same assumptions of Theorem \ref{Theorem3} and Theorem
\ref{Theorem4}, we have that
$\mathscr{R}_{a_{1},b_{1}}=\mathscr{R}_{a_{2},b_{2}}$
 implies  $ a_{1}=a_{2}$  and $b_{1}=b_{2}$ in $Q_{r,\sharp} $.
\end{Corol}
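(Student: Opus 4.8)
The plan is to obtain this uniqueness statement as an immediate consequence of the two quantitative stability estimates already established in Theorem \ref{Theorem3} and Theorem \ref{Theorem4}, by specializing them to the degenerate situation in which the two response operators coincide. No new analysis is required: the entire content is that the modulus of continuity on the right-hand side of each estimate is continuously extended to vanish when the data discrepancy is zero.

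First I would assume $\mathscr{R}_{a_{1},b_{1}}=\mathscr{R}_{a_{2},b_{2}}$, so that the operator norm $\varepsilon:=\|\mathscr{R}_{a_{1},b_{1}}-\mathscr{R}_{a_{2},b_{2}}\|$ equals $0$. Since the pairs $(a_{i},b_{i})$ are assumed to satisfy exactly the hypotheses of Theorem \ref{Theorem3}, that theorem applies and gives
$$\|a_{2}-a_{1}\|_{L^{\infty}(Q_{r,\sharp})}\leq C\left(\varepsilon^{\mu_{1}}+|\log\varepsilon|^{-1}\right)^{\mu_{2}}.$$
I would then note that the right-hand side, viewed as a function of $\varepsilon$, tends to $0$ as $\varepsilon\to 0^{+}$: the first term satisfies $\varepsilon^{\mu_{1}}\to 0$, while $|\log\varepsilon|^{-1}\to 0$ because $|\log\varepsilon|\to+\infty$. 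Adopting the standard convention that this modulus of continuity is extended continuously by the value $0$ at $\varepsilon=0$, the estimate forces $\|a_{2}-a_{1}\|_{L^{\infty}(Q_{r,\sharp})}=0$, that is $a_{1}=a_{2}$ almost everywhere in $Q_{r,\sharp}$.

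Next I would insert the same vanishing discrepancy $\varepsilon=0$ into Theorem \ref{Theorem4}, whose hypotheses are identical, to get
$$\|b_{2}-b_{1}\|_{H^{-1}(Q_{r,\sharp})}\leq C\left(\log|\log\varepsilon|^{\mu}\right)^{-1}.$$
The same reasoning applies: as $\varepsilon\to 0^{+}$ we have $|\log\varepsilon|\to+\infty$, hence $\log|\log\varepsilon|^{\mu}=\mu\log|\log\varepsilon|\to+\infty$, so the right-hand side tends to $0$. With the same continuous-extension convention this yields $\|b_{2}-b_{1}\|_{H^{-1}(Q_{r,\sharp})}=0$, hence $b_{1}=b_{2}$ as elements of $H^{-1}(Q_{r,\sharp})$, and therefore $b_{1}=b_{2}$ in $Q_{r,\sharp}$. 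Combining the two conclusions establishes the corollary.

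The only point deserving a word of care, and the nearest thing to an obstacle, is the interpretation of the logarithmic factors at $\varepsilon=0$, where $\log\varepsilon$ is not literally defined. I would handle this exactly as above, by continuous extension of each modulus of continuity to $\varepsilon=0$: the left-hand side of each estimate is a fixed nonnegative number attached to the given coefficient pair, and it is dominated by a quantity that vanishes in this limit, which forces it to be zero. All the substantive work is already contained in Theorems \ref{Theorem3} and \ref{Theorem4}, so beyond this interpretive remark the deduction is routine.
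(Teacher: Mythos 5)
Your proposal is correct and matches the paper exactly: the corollary is stated there with no separate proof, precisely as the immediate specialization of Theorems \ref{Theorem3} and \ref{Theorem4} to vanishing data discrepancy that you carry out. Your interpretive remark about $\varepsilon=0$ is the only delicate point, and if you want it airtight, rather than extending the modulus of continuity by convention (the theorems, read literally, assert nothing when $\|\mathscr{R}_{a_{2},b_{2}}-\mathscr{R}_{a_{1},b_{1}}\|=0$), you can re-enter the proof: Lemma \ref{Lemma4.1} with vanishing discrepancy gives a bound $C/\lambda$ for every large $\lambda$, so letting $\lambda\to\infty$ annihilates the light-ray transform of $a$, hence $\widehat{a}=0$ on $E$ and, by analyticity of $\widehat{a}$ (compact support), $a_{1}=a_{2}$ in $Q_{r,\sharp}$, after which Lemma \ref{Lemma4.2} with $\|a\|_{L^{\infty}(Q_{r,\sharp})}=0$ yields the same conclusion for $b$.
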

\subsubsection{Determination of coefficients from boundary measurements and final data by varying the initial data:}
In the first and the second case, we can see that there is no hope to recover
the unknown coefficients $a$ and $b$ over the whole domain, since the initial
data $(u_{0},u_{1})$ are  zero. However, we shall prove that this is
no longer the case by considering all possible initial data.

We define the following space
 $\mathcal{F}=H^{1}(\Sigma)\times H^{1}(\Omega)\times
L^{2}(\Omega)$. In this case we will consider observations given by the
following operator:
$$\begin{array}{ccc}
\mathcal{I}_{a,b}:\,\,\,\,\,\,\,\,\,\,\,\,\mathcal{F}&\longrightarrow& \mathcal{K}\\
\,\,\,\,\,\,\,\,\,\,\,\,\,\,\,\,\,\,\,(f,u_{0},u_{1})&\longmapsto& (\p_{\nu}u,u(\cdot,T),\p_{t}u(\cdot,T)).
\end{array}$$
By (\ref{estim energie}), we deduce that $\mathcal{I}_{a,b}$ is continuous
from $\mathcal{F}$ into $\mathcal{K}$, we denote by $\|\mathcal{I}_{a,b}\|$
its norm in $\mathcal{L}(\mathcal{F},\mathcal{K})$. Having said that, we are
now in position to state the last main result.

\begin{theorem}\label{Theorem5}
 There exist constants $C>0$ and $\mu_{1},\,\mu_{2}\in(0,1)$ such that the
 following estimate holds
$$\|a_{2}-a_{1}\|_{L^{\infty}(Q)}\leq
 C \Big( \|\mathcal{I}_{a_{1},b_{1}}-\mathcal{I}_{a_{2},b_{2}}\|^{\mu_{1}}+|\log \|\mathcal{I}_{a_{1},b_{1}}-\mathcal{I}_{a_{2},b_{2}}\||^{-1}
 \Big)^{\mu_{2}},$$
for any $(a_{i},b_{i})\in \mathcal{C}^{2}(\overline{Q})\times
\mathcal{C}^{1}(\overline{Q})$, such that
$\|a_{i}\|_{\mathcal{C}^{2}(Q)}+\|a_{i}\|_{H^{p}(Q)}\leq M_{1}$ for some
$p>n/2+3/2$, $\|b_{i}\|_{\mathcal{C}^{1}(Q)}\leq M_{2}$ and $(\nabla
a_{1},\nabla b_{1})=(\nabla a_{2},\nabla b_{2})$ on $\Sigma$. Here $C$
depends only on $\Omega$, $M_{1}$, $M_{2}$, $T$ and $n$.
\end{theorem}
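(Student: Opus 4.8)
The plan is to adapt the geometric-optics scheme underlying Theorems \ref{Theorem1}--\ref{Theorem4}; the new ingredient is that the freedom to prescribe arbitrary initial data $(u_0,u_1)$ lets the family of light rays used in the reconstruction sweep out the entire cylinder $Q$ rather than only the forward cone $Q_{r,\sharp}$. Write $a=a_2-a_1$, $b=b_2-b_1$ and set $\varepsilon=\norm{\mathcal{I}_{a_1,b_1}-\mathcal{I}_{a_2,b_2}}$. First I would record the basic orthogonality identity: for a fixed input $(f,u_0,u_1)\in\mathcal{F}$, let $u^{(j)}$ solve (\ref{EQ1}) with coefficients $(a_j,b_j)$ and this common data; then $w=u^{(1)}-u^{(2)}$ satisfies $\p_t^2 w-\Delta w+a_2\p_t w+b_2 w=a\,\p_t u^{(1)}+b\,u^{(1)}$ in $Q$ with vanishing Cauchy data at $t=0$ and vanishing trace on $\Sigma$, while its remaining traces $(\p_\nu w,w(\cdot,T),\p_t w(\cdot,T))$ are exactly the components of $(\mathcal{I}_{a_1,b_1}-\mathcal{I}_{a_2,b_2})(f,u_0,u_1)$, hence of size $O(\varepsilon\,\norm{(f,u_0,u_1)}_{\mathcal{F}})$ in $\mathcal{K}$.

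Next I would pair this equation with a solution $v$ of the adjoint equation $\p_t^2 v-\Delta v-\p_t(a_2 v)+b_2 v=0$. Integrating by parts over $Q$ and using that $w$ has zero Cauchy data at $t=0$ and zero lateral trace, every surviving boundary contribution reduces to an integral over $\Sigma$ or over $\Omega\times\set{T}$ pairing a trace of $w$ with a trace of $v$, which yields the key estimate
\begin{equation*}
\abs{\int_Q\para{a\,\p_t u^{(1)}+b\,u^{(1)}}\,\overline{v}\,dx\,dt}\leq C\,\varepsilon\,\norm{u^{(1)}}\,\norm{v},
\end{equation*}
the two norms on the right being the relevant energy norms of the special solutions. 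Here the final data in $\mathcal{I}_{a,b}$ is precisely what absorbs the $t=T$ boundary terms, while the freedom in $(u_0,u_1)$ removes the constraint that forced the earlier constructions to be supported inside a light cone.

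The special solutions would be built by the usual WKB construction: for $\omega\in\s^{n-1}$ and a large parameter $\lambda$, take $u^{(1)}=e^{i\lambda(x\cdot\omega-t)}\Phi_1+R_1$ and $v=e^{i\lambda(x\cdot\omega-t)}\Phi_2+R_2$, where the phase $x\cdot\omega-t$ solves the eikonal equation, the principal amplitudes solve the transport equations $(\p_t+\omega\cdot\nabla)\Phi_1+\tfrac12 a_2\Phi_1=0$ and $(\p_t+\omega\cdot\nabla)\Phi_2-\tfrac12 a_2\Phi_2=0$ along the light ray $s\mapsto(x_0+s\omega,s)$, and the remainders satisfy $\norm{R_j}=O(\lambda^{-1})$ by the energy estimate (\ref{estim energie}). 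Substituting, the phases cancel, the two damping factors cancel in the product $\Phi_1\overline{\Phi_2}$, the $b$-term is of lower order, and after dividing by $\lambda$ and letting $\lambda\to\infty$ the leading contribution is the light-ray transform $\int_{\R}a(x_0+s\omega,t_0+s)\,ds$ of $a$; modulating the amplitudes by an extra oscillatory factor and varying $\omega$ then recovers $\widehat{a}(\xi,\tau)$ on the set $\set{\abs{\tau}\leq\abs{\xi}}$.

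Finally I would turn this into the stated bound. Keeping $\lambda$ finite, the above estimate controls the light-ray transform of $a$ by $C(\varepsilon\lambda+\lambda^{-1})$, whence (optimizing $\lambda$) a H\"older bound in $\varepsilon$; converting the light-ray transform into control of $\widehat{a}$, splitting frequency space at a cutoff $\rho$, bounding the tail $\abs{(\xi,\tau)}>\rho$ by the a priori estimate $\norm{a_i}_{H^p}\leq M_1$, and optimizing $\rho$ against $\varepsilon$, produces a combined H\"older-plus-logarithmic bound on $\norm{a}_{L^2(Q)}$; a Sobolev interpolation between this and the $H^p$ bound upgrades it to the $L^\infty(Q)$ estimate with exponents $\mu_1,\mu_2\in(0,1)$. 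The step I expect to be the main obstacle is the geometric-optics construction together with the bookkeeping guaranteeing that every boundary term is controlled by $\varepsilon$ uniformly over the whole family of rays filling $Q$: one must solve the transport equations with amplitudes whose rays exhaust $Q$ (using nonzero $(u_0,u_1)$), keep $\norm{R_j}=O(\lambda^{-1})$ uniformly, and verify that the damping $a_2$ in the transport equations does not corrupt the extraction of the light-ray transform of $a$.
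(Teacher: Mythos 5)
Your overall architecture matches the paper's: Section 5 of the paper proves Theorem \ref{Theorem5} exactly by rerunning the machinery of Sections \ref{subsection3.1} and \ref{subsection4.1}, with the single new observation you correctly identify as the crux — since the initial data $(u_0,u_1)$ are now at our disposal and the final data absorb the $t=T$ boundary terms, the localizing function $\varphi$ (resp.\ $\varphi_h$) no longer needs its support, or its translates by $\pm T\omega$, to avoid $\Omega$, so the rays sweep out all of $Q$. The orthogonality identity, the pairing with an adjoint geometric-optics solution, and the extraction of the light-ray transform are all as in the paper.

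There is, however, one genuine gap in your final step. The light-ray transform only gives $\widehat{a}(\xi,\tau)$ for $\tau=\omega\cdot\xi$ with $\omega\in\s^{n-1}$, i.e.\ on the cone $E=\set{\abs{\tau}<\abs{\xi}}$; the timelike frequencies $\abs{\tau}>\abs{\xi}$ are simply not reachable by any choice of $\omega$, no matter how the amplitudes are modulated. Your plan "split frequency space at a cutoff $\rho$, bound the tail by the $H^p$ a priori bound, optimize $\rho$" therefore does not close: the low-frequency ball $\abs{(\xi,\tau)}<\rho$ contains a whole sector on which you have no estimate for $\widehat{a}$ at all. The paper bridges this with Lemma \ref{Lemma3.4} (proved in Appendix A): $\widehat{a}$ is analytic with controlled derivative growth because $a$ is compactly supported, and conditional stability for analytic continuation propagates the bound from $E\cap B(0,\alpha)$ to all of $B(0,\alpha)$ at the price of a factor $e^{\alpha(1-\gamma)}$ and an exponent $\gamma\in(0,1)$. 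This step is not cosmetic — it is the source of the $\abs{\log\varepsilon}^{-1}$ term and of the outer exponent $\mu_2$ in the statement; without it one cannot obtain any bound of the claimed form. A second, more minor, inaccuracy: the damping factors in $\Phi_1\overline{\Phi_2}$ do not cancel, because the source is $a\,\p_t u^{(1)}$ with $u^{(1)}$ damped by $a_1$ while the adjoint amplitude is anti-damped by $a_2$; the product carries $\exp\bigl(\pm\tfrac{1}{2}\int a\bigr)$ with $a=a_2-a_1$, and the paper extracts the ray transform exactly by writing $a\varphi^2 A=-2\varphi^2\frac{d}{dt}A$, integrating in $t$, and then using $\abs{X}\leq e^{M}\abs{e^{X}-1}$. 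Your conclusion survives, but this extra exponential step must be supplied.
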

As a consequence, we have the following result
\begin{theorem}\label{Theorem6}
There exist two constants $C>0$ and $\mu\in (0,1)$ such that the following
estimate holds
$$\|b_{2}-b_{1}\|_{H^{-1}(Q)}\leq C \Big( \log |\log \|\mathcal{I}_{a_{2},b_{2}}-\mathcal{I}_{a_{1},b_{1}}\||^{\mu} \Big)^{-1}, $$
for any $(a_{i},b_{i})\in \mathcal{C}^{2}(\overline{Q})\times
\mathcal{C}^{1}(\overline{Q})$, such that
$\|a_{i}\|_{\mathcal{C}^{2}(Q)}+\|a_{i}\|_{H^{p}(Q)}\leq M_{1}$ for some
$p>n/2+3/2$, $\|b_{i}\|_{\mathcal{C}^{1}(Q)}\leq M_{2}$ and $(\nabla
a_{1},\nabla b_{1})=(\nabla a_{2},\nabla b_{2})$ on $\Sigma$. Here $C$
depends only on $\Omega$, $M_{1}$, $M_{2}$, $T$ and $n$.
\end{theorem}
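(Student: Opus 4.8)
The plan is to obtain Theorem~\ref{Theorem6} as a refinement of Theorem~\ref{Theorem5}, carrying the geometric optics analysis used there one order further in the large parameter $\lambda$. Set $a=a_2-a_1$, $b=b_2-b_1$ and $\varepsilon=\norm{\mathcal{I}_{a_2,b_2}-\mathcal{I}_{a_1,b_1}}$; the hypothesis $(\nabla a_1,\nabla b_1)=(\nabla a_2,\nabla b_2)$ on $\Sigma$ lets one extend $a$ and $b$ to compactly supported functions on $\R^{n+1}$ without loss of regularity, so that their Fourier transforms are entire, while Theorem~\ref{Theorem5} already gives $\norm{a}_{L^{\infty}(Q)}\le\eta(\varepsilon)$ with $\eta(\varepsilon)=C\big(\varepsilon^{\mu_1}+\abs{\log\varepsilon}^{-1}\big)^{\mu_2}$. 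First I would return to the integral identity behind Theorem~\ref{Theorem5}: pairing a geometric optics solution $u_1$ of (\ref{EQ1}) with coefficients $(a_1,b_1)$ against a solution $u_2$ of the backward equation with $(a_2,b_2)$, both built on the same real null phase $\psi=t-x\cdot\omega$, one obtains an identity in which $a\,\p_t u_1\,\overline{u_2}$ enters at order $\lambda$ and reproduces the light ray transform of $a$, whereas $b\,u_1\overline{u_2}$ sits at order $\lambda^0$. Here the freedom to prescribe arbitrary initial data $(u_0,u_1)$ is exactly what makes every null ray crossing $Q$ admissible, which is why $b$ can be recovered over all of $Q$ rather than over a sub-cone.

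The term $\int_Q a\,\p_t u_1\,\overline{u_2}$ is of order $\lambda$ and, by Theorem~\ref{Theorem5}, is $O\big(\lambda\,\eta(\varepsilon)\big)$; moving it to the right-hand side together with the boundary and final-data pairing, which is $O(\lambda\varepsilon)$, leaves at order $\lambda^0$ the integral $\int_Q b\,u_1\overline{u_2}$ up to an $O(\lambda^{-1})$ remainder. Choosing the amplitudes to carry a secondary oscillation and letting $\omega$ range over $\mathbb{S}^{n-1}$, this integral becomes a Fourier mode of $b$, so one controls $\widehat b(\xi,\tau)$ on the spacelike cone $\abs\tau\le\abs\xi$; since $b$ is compactly supported $\widehat b$ is entire, and analytic continuation in $\tau$ carries the control to the timelike frequencies as well. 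After optimising $\lambda$ against the remainder the outcome is an estimate $\abs{\widehat b(\xi,\tau)}\le C\,e^{CR}\big(\varepsilon+\eta(\varepsilon)\big)$ for all $\abs{(\xi,\tau)}\le R$, the factor $e^{CR}$ encoding the ill-posedness of extracting frequency-$R$ information from the data.

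To pass to $\norm{b}_{H^{-1}(Q)}$ I would split $\norm{b}_{H^{-1}}^2=\int\langle(\xi,\tau)\rangle^{-2}\abs{\widehat b}^2\,d\xi\,d\tau$ at $\abs{(\xi,\tau)}=R$. The high frequency part is handled by the a priori bound $\norm{b}_{C^1}\le M_2$ and contributes $\le CR^{-2}$, while on $\abs{(\xi,\tau)}\le R$ the estimate above gives, up to polynomial factors in $R$, a contribution $\le C\,e^{CR}\big(\varepsilon+\eta(\varepsilon)\big)$. Taking $R\sim\log\big(1/(\varepsilon+\eta(\varepsilon))\big)$ balances the two and yields $\norm{b}_{H^{-1}(Q)}\le C\big(\log(1/\eta(\varepsilon))\big)^{-1}$, since $\eta(\varepsilon)$ dominates $\varepsilon$ for small $\varepsilon$. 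Finally, as $\eta(\varepsilon)$ behaves like $\abs{\log\varepsilon}^{-\mu_2}$, one has $\log(1/\eta(\varepsilon))\ge c\,\log\abs{\log\varepsilon}$ for small $\varepsilon$, which is exactly the announced double logarithmic rate $C\big(\log\abs{\log\varepsilon}^{\mu}\big)^{-1}$.

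The main obstacle, I expect, is twofold. Analytically, the null light rays furnish Fourier information only on the spacelike cone, and one must transfer it to the timelike frequencies; this rests on the compact support of $b$ and a quantitative analytic continuation (two-constants) estimate, and it is the delicate source of the amplification factor $e^{CR}$. Organisationally, one must track how the single logarithmic bound $\eta(\varepsilon)$ coming from Theorem~\ref{Theorem5} propagates through the optimisation in $\lambda$ and $R$, so that it is converted into a second logarithm rather than lost. A subsidiary point is to justify the zero extensions of $a$ and $b$—where the condition $(\nabla a_1,\nabla b_1)=(\nabla a_2,\nabla b_2)$ on $\Sigma$ enters—and to verify the geometric optics expansion to the order that exposes $b$, which is where the regularity $\norm{a_i}_{H^{p}(Q)}\le M_1$ with $p>n/2+3/2$ is consumed.
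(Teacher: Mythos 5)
Your proposal is correct and follows essentially the same route as the paper: the cross term $\int_Q a\,\p_t u^{+}u^{-}$ (of order $\lambda$) is absorbed using the $L^{\infty}$ bound on $a=a_{2}-a_{1}$ from Theorem \ref{Theorem5}, the resulting order-$\lambda^{0}$ identity yields control of $\widehat{b}$ on the spacelike cone $|\tau|<|\xi|$ via the light-ray transform, quantitative analytic continuation (Lemma \ref{Lemma3.4}) extends this to a ball of radius $\alpha$ at the cost of a factor $e^{C\alpha}$, and the frequency splitting of $\|b\|_{H^{-1}}$ with $\alpha\sim\log|\log\varepsilon|$ converts the single logarithm for $a$ into the double logarithm for $b$. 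This is precisely the scheme of Sections \ref{subsection3.2} and \ref{Sec5}, with the only (correctly noted) simplification that no support restriction on $\varphi_{h}$ is needed when all initial data are allowed.
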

\begin{Corol}
Under the same assumptions of Theorem \ref{Theorem5} and Theorem
\ref{Theorem6}, we have that
$\mathcal{I}_{a_{1},b_{1}}=\mathcal{I}_{a_{2},b_{2}}$ implies that
$a_{1}=a_{2}$ and $b_{1}=b_{2}$ everywhere in $Q$.
\end{Corol}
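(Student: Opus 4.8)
The plan is to read off both equalities directly from the two quantitative estimates of Theorem \ref{Theorem5} and Theorem \ref{Theorem6}, by specializing them to the case where the two response operators agree. Writing
$$\delta = \|\mathcal{I}_{a_{1},b_{1}}-\mathcal{I}_{a_{2},b_{2}}\|,$$
the hypothesis $\mathcal{I}_{a_{1},b_{1}}=\mathcal{I}_{a_{2},b_{2}}$ is exactly the statement $\delta=0$. The entire content of the argument is therefore the observation that the right-hand sides of both estimates vanish as $\delta\to 0^{+}$, while their left-hand sides are fixed numbers that do not depend on $\delta$.

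First I would handle the absorbing coefficient. Theorem \ref{Theorem5} gives
$$\|a_{2}-a_{1}\|_{L^{\infty}(Q)}\leq C\para{\delta^{\mu_{1}}+|\log\delta|^{-1}}^{\mu_{2}}.$$
Since $\mu_{1}>0$ we have $\delta^{\mu_{1}}\to 0$, and since $\log\delta\to-\infty$ we have $|\log\delta|^{-1}\to 0$; hence the whole right-hand side tends to $0$. As the left-hand side is a fixed nonnegative quantity, this forces $\|a_{2}-a_{1}\|_{L^{\infty}(Q)}=0$, i.e. $a_{1}=a_{2}$ almost everywhere in $Q$. Because both coefficients lie in $\mathcal{C}^{2}(\overline{Q})$, the equality in fact holds everywhere in $Q$. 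The same reasoning applied to Theorem \ref{Theorem6}, whose bound reads
$$\|b_{2}-b_{1}\|_{H^{-1}(Q)}\leq C\para{\log|\log\delta|^{\mu}}^{-1},$$
gives the potential: as $\delta\to 0^{+}$ one has $|\log\delta|\to+\infty$, then $|\log\delta|^{\mu}\to+\infty$ (using $\mu>0$), and finally $\log|\log\delta|^{\mu}\to+\infty$, so the right-hand side tends to $0$ and $\|b_{2}-b_{1}\|_{H^{-1}(Q)}=0$.

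The only points that require a word of care are the interpretation of the estimates exactly at $\delta=0$ and the regularity upgrade for $b$. For the former, rather than substituting $\delta=0$ formally into expressions such as $|\log\delta|^{-1}$, the clean statement is that Theorems \ref{Theorem5} and \ref{Theorem6} hold for the fixed admissible pairs $(a_{1},b_{1})$ and $(a_{2},b_{2})$ with this exact value of $\delta$, and that both right-hand sides extend continuously by $0$ as $\delta\to 0^{+}$; since the left-hand sides are $\delta$-independent, they must vanish. For the latter, $\|b_{2}-b_{1}\|_{H^{-1}(Q)}=0$ together with $b_{1},b_{2}\in\mathcal{C}^{1}(\overline{Q})\subset L^{2}(Q)$ and the injectivity of the embedding $L^{2}(Q)\hookrightarrow H^{-1}(Q)$ yields $b_{1}=b_{2}$ almost everywhere, hence everywhere in $Q$ by continuity. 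No further work is needed: the uniqueness is an immediate byproduct of the stability estimates already proved, since a stability estimate whose modulus of continuity vanishes at the origin automatically implies injectivity of the map $(a,b)\mapsto\mathcal{I}_{a,b}$.
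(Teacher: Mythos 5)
Your proposal is correct and is essentially the argument the paper intends: the corollary is stated as an immediate consequence of Theorems \ref{Theorem5} and \ref{Theorem6}, and reading off $a_{1}=a_{2}$ and $b_{1}=b_{2}$ from the vanishing of the right-hand sides at $\delta=0$ (with the moduli $\delta^{\mu_{1}}+|\log\delta|^{-1}$ and $(\log|\log\delta|^{\mu})^{-1}$ interpreted by their continuous extension, which is justified since the paper's derivation with $\delta=0$ gives $\|a\|_{H^{-1}}^{2/\gamma}\leq C\alpha^{-2/\gamma}$ for all large $\alpha$) is exactly the intended route. Your two added points of care --- the injectivity of $L^{2}(Q)\hookrightarrow H^{-1}(Q)$ to pass from $\|b_{2}-b_{1}\|_{H^{-1}(Q)}=0$ to $b_{1}=b_{2}$, and the continuity of the coefficients to upgrade almost-everywhere to everywhere equality --- are correct and fill in precisely what the paper leaves implicit.
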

The outline of this paper is as follows. Section \ref{Sec2} is devoted to the
construction of geometric optics solutions to the equation (\ref{EQ1}). Using
these particular solutions, we establish in Section \ref{Sec3} stability
estimates for the absorbing coefficient $a$ and the  potential $b$. Section
\ref{Sec4} and \ref{Sec5} are devoted to the proof of the results of the
second and third case respectively. In appendix A, we develop the proof of an
analytic technical result.
\section{Construction of geometric optics solutions}\label{Sec2}
The present section is devoted to the construction of suitable geometrical
optics solutions for the dissipative wave equation (\ref{EQ1}), which are key
ingredients to the proof of our main results. The construction here is a
modification of a similar result in \cite{[R10]}. We shall first state
the following lemma which is needed to prove the main statement of this
section.
\begin{Lemm}(see \cite{[R15]})\label{Lemma2.1}
Let $T,\,M_{1},\,M_{2}>0$, $a\in L^{\infty}(Q)$ and $b\in L^{\infty}(Q)$,
such that $\|a\|_{L^{\infty}(Q)}\leq M_{1}$ and $\|b\|_{L^{\infty}(Q)}\leq
M_{2}$. Assume that $F\in L^{1}(0,T;L^{2}(\Omega))$. Then, there exists  a
unique solution $u$ to the following equation
\begin{equation}\label{EQ2.2}
\left\{
  \begin{array}{ll}
    \p_{t}^{2}u-\Delta u+a(x,t)\p_{t}u+b(x,t)u(x,t)=F(x,t) & \mbox{in}\,\,\,Q, \\
\\
    u(x,0)=0=\p_{t}u(x,0) & \mbox{in}\,\,\,\Omega, \\
\\
    u(x,t)=0 & \mbox{on}\,\,\,\Sigma,
  \end{array}
\right.
\end{equation}
such that
$$u\in \mathcal{C}([0,T];H^{1}_{0}(\Omega))\cap
\mathcal{C}^{1}([0,T];L^{2}(\Omega)),\,\,\,\,  $$
Moreover, there exists a
constant $C>0$ such that
\begin{equation}\label{EQ2.3}
\|\p_{t}u(\cdot,t)\|_{L^{2}(\Omega)}+\|\nabla
u(\cdot,t)\|_{L^{2}(\Omega)}\leq C \|F\|_{L^{1}(0,T;L^{2}(\Omega))}.
\end{equation}
\end{Lemm}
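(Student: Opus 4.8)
The plan is to view (\ref{EQ2.2}) as a bounded, time-dependent lower-order perturbation of the free wave equation with homogeneous Dirichlet data, and to proceed in the classical order: first derive the a priori energy estimate (\ref{EQ2.3}), which already forces uniqueness, and only then construct a solution by approximation. The guiding observation is that, since $a,b\in L^{\infty}(Q)$, the terms $a\,\p_t u$ and $bu$ may be transferred to the right-hand side and treated as part of the forcing, the resulting feedback being absorbed by a Gronwall argument on the finite interval $[0,T]$.

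To obtain (\ref{EQ2.3}) I would multiply the equation by $\p_t u$, integrate over $\Omega$, and integrate by parts using $u=0$ on $\Sigma$. Writing $E(t)=\tfrac12\para{\norm{\p_t u(\cdot,t)}_{L^{2}(\Omega)}^{2}+\norm{\nabla u(\cdot,t)}_{L^{2}(\Omega)}^{2}}$, this yields
$$E'(t)=-\int_{\Omega} a\,(\p_t u)^{2}\,dx-\int_{\Omega} b\,u\,\p_t u\,dx+\int_{\Omega} F\,\p_t u\,dx.$$
Bounding the first two integrals by $C(M_1,M_2)\,E(t)$ (using Poincar\'e's inequality to control $\norm{u}_{L^{2}(\Omega)}$ by $\norm{\nabla u}_{L^{2}(\Omega)}$) and the last by $\norm{F(\cdot,t)}_{L^{2}(\Omega)}\sqrt{2E(t)}$, one gets
$$\frac{d}{dt}\sqrt{E(t)}\le C\sqrt{E(t)}+\norm{F(\cdot,t)}_{L^{2}(\Omega)}.$$
Since $E(0)=0$, Gronwall's lemma gives $\sqrt{E(t)}\le C\,e^{CT}\norm{F}_{L^{1}(0,T;L^{2}(\Omega))}$, which is exactly (\ref{EQ2.3}). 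Applying the same estimate to the difference of two solutions with $F=0$ shows $E\equiv0$, hence uniqueness.

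For existence I would run a Galerkin scheme in the space variable, projecting onto the span of the first $m$ Dirichlet eigenfunctions of $-\Delta$; the approximate problems are linear ODE systems whose solutions $u_m$ satisfy the same energy identity, hence the uniform bound $\sup_{[0,T]}E_m\le C\norm{F}_{L^{1}(0,T;L^{2}(\Omega))}^{2}$. This makes $(u_m)$ bounded in $L^{\infty}(0,T;H^{1}_{0}(\Omega))$ and $(\p_t u_m)$ bounded in $L^{\infty}(0,T;L^{2}(\Omega))$; extracting weak-$*$ limits and passing to the limit in the variational formulation produces a weak solution, whose membership in $\mathcal{C}([0,T];H^{1}_{0}(\Omega))\cap\mathcal{C}^{1}([0,T];L^{2}(\Omega))$ follows from the standard continuity-in-time argument. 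Equivalently, one may let $\{W(t)\}$ be the unitary group generated by the free wave operator on $H^{1}_{0}(\Omega)\times L^{2}(\Omega)$ and solve the Duhamel equation
$$\binom{u(t)}{\p_t u(t)}=\int_{0}^{t} W(t-s)\binom{0}{F(s)-a(s)\p_t u(s)-b(s)u(s)}\,ds$$
by a contraction on short intervals, iterated to cover $[0,T]$.

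The main obstacle is the low time-regularity of the source: $F$ lies only in $L^{1}(0,T;L^{2}(\Omega))$, which excludes the most direct $L^{2}(0,T;L^{2}(\Omega))$ variational setting and is precisely why the argument must be carried out at the level of the energy space $H^{1}_{0}(\Omega)\times L^{2}(\Omega)$, where the wave propagator is bounded so that a forcing merely integrable in time is still controlled. The time dependence of $a$ and $b$ additionally rules out a direct appeal to autonomous semigroup perturbation theory, leaving the Gronwall-based Galerkin approximation (or the short-interval contraction above) as the natural route.
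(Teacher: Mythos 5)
Your proposal is correct, and it supplies the standard proof of a result that the paper itself does not prove at all: Lemma \ref{Lemma2.1} is simply quoted with a citation to Ikawa's book \cite{[R15]}, and your energy-identity--plus--Gronwall estimate followed by a Galerkin (or Duhamel/contraction) construction is precisely the classical argument such a reference employs, correctly adapted to the $L^{1}(0,T;L^{2}(\Omega))$ forcing by working with $\sqrt{E(t)}$. The only technical points worth flagging are minor and standard: the division by $\sqrt{E}$ should be justified at zeros of $E$ (e.g.\ by working with $\sqrt{E+\varepsilon}$), and the energy identity is legitimately derived at the level of the smooth Galerkin approximants rather than for the limiting weak solution directly --- which is exactly how you have arranged the argument, so both uniqueness and the estimate (\ref{EQ2.3}) go through.
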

Armed with the above lemma, we may now construct suitable geometrical optics
solutions to the dissipative wave equation (\ref{EQ1}) and to its retrograde
problem. For this purpose, we consider  $\varphi\in \mathcal{C}_{0}^{\infty}(\R^{n})$ and  notice that for 
  all $\omega\in \mathbb{S}^{n-1}$ the function $\phi$ given by
\begin{equation}\label{EQ2.4}
\phi(x,t)=\varphi(x+t\omega),
\end{equation} solves the following transport equation
\begin{equation}\label{EQ2.5}
(\p_{t}-\omega\cdot\nabla)\phi(x,t)=0.
\end{equation}
 We are now in position to prove the following statement
\begin{Lemm}\label{Lemma2.2}
Let $(a_{i},b_{i})\in \mathcal{A}(M_{1},M_{2})$, $i=1,2$. Given $\omega\in
\mathbb{S}^{n-1}$ and $\varphi\in \mathcal{C}_{0}^{\infty}(\R^{n})$, we
consider the function $\phi$ defined by (\ref{EQ2.4}). Then, for any
$\lambda>0$, the following equation
\begin{equation}\label{EQ2.6}
\p_{t}^{2}u-\Delta u
+a(x,t)
\p_{t}u+b(x,t)u=0\,\,\,\,\,\,\,\mbox{in}\,\,\,Q,
\end{equation}
admits a unique solution
$$u^{+}\in \mathcal{C}([0,T];H^{1}(\Omega))\cap \mathcal{C}^{1}([0,T];L^{2}(\Omega)),$$
of the following form
\begin{equation}\label{EQ2.7}
u^{+}(x,t)=\phi(x,t)A^{+}(x,t)e^{i\lambda(x\cdot\omega+t)}+r_{\lambda}^{+}(x,t),
\end{equation}
where $A^{+}(x,t)$ is given by
\begin{equation}\label{EQ2.8}
A^{+}(x,t)=\exp\Big(-\frac{1}{2}\int_{0}^{t}a(x+(t-s)\omega,s)\,ds\Big),
\end{equation}
and   $r_{\lambda}^{+}(x,t)$ satisfies
\begin{equation}\label{EQ2.9}
r_{\lambda}^{+}(x,0)=\p_{t}
r_{\lambda}^{+}(x,0)=0,\,\,\,\mbox{in}\,\,\Omega,\,\,\,\,\,\,r_{\lambda}^{+}(x,t)=0\,\,\,\mbox{on}\,\,\Sigma.
\end{equation}
 Moreover, there exists a positive constant $C>0$ such that
\begin{equation}\label{EQ2.10}
 \lambda\|r_{\lambda}^{+}(\cdot,t)\|_{L^{2}(\Omega)}+\|\p_{t}r_{\lambda}^{+}(\cdot,t)\|_{L^{2}(\Omega)}\leq C\|\varphi\|_{H^{3}(\R^{n})}.
\end{equation}
\end{Lemm}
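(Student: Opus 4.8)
The plan is to insert the ansatz (\ref{EQ2.7}) into the operator $L:=\p_t^2-\Delta+a\p_t+b$ and to choose the amplitude and the remainder so that $Lu^+=0$. Write $\psi(x,t)=x\cdot\omega+t$ for the phase, so that $|\nabla\psi|^2=|\omega|^2=1=(\p_t\psi)^2$; that is, $\psi$ solves the eikonal equation. Applying $L$ to $v:=\phi A^+e^{i\lambda\psi}$ and collecting powers of $\lambda$, the $\lambda^2$--terms equal $\big(-(\p_t\psi)^2+|\nabla\psi|^2\big)\phi A^+e^{i\lambda\psi}$ and vanish by the eikonal identity; the $\lambda^1$--terms equal $2i\lambda\big[(\p_t-\omega\cdot\nabla)(\phi A^+)+\tfrac12 a\,\phi A^+\big]e^{i\lambda\psi}$; and the $\lambda^0$--terms form a source $g_\lambda=c\,e^{i\lambda\psi}$, where $c:=(\p_t^2-\Delta)(\phi A^+)+a\,\p_t(\phi A^+)+b\,\phi A^+$ is independent of $\lambda$.

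First I would annihilate the $\lambda^1$--term by requiring the amplitude to solve the transport equation $(\p_t-\omega\cdot\nabla)(\phi A^+)+\tfrac12 a\,\phi A^+=0$. Since $\phi$ already satisfies (\ref{EQ2.5}), this reduces to $(\p_t-\omega\cdot\nabla)A^+=-\tfrac12 a\,A^+$, a transport ODE along the rays $s\mapsto x+(t-s)\omega$ whose solution normalized by $A^+(\cdot,0)=1$ is exactly (\ref{EQ2.8}). A direct differentiation of (\ref{EQ2.8}) confirms this: in $(\p_t-\omega\cdot\nabla)$ applied to the exponent, the boundary contribution from the upper limit produces $-\tfrac12 a(x,t)A^+$, while the two integrals of $\omega\cdot(\nabla a)(x+(t-s)\omega,s)$ cancel. (Here $a$ is understood to be extended off $\overline Q_r$ while keeping its bounds, so that the ray integrals in (\ref{EQ2.8}) are well defined.) With this choice of $A^+$ one has $Lv=g_\lambda$.

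Next I would define $r_\lambda^+$ as the unique solution, furnished by Lemma \ref{Lemma2.1}, of $Lr_\lambda^+=-g_\lambda$ in $Q$ with $r_\lambda^+(\cdot,0)=\p_tr_\lambda^+(\cdot,0)=0$ and $r_\lambda^+=0$ on $\Sigma$; then $u^+=v+r_\lambda^+$ solves (\ref{EQ2.6}), (\ref{EQ2.9}) holds by construction, and uniqueness of $u^+$ of the prescribed form follows from the uniqueness in Lemma \ref{Lemma2.1}. Since $|e^{i\lambda\psi}|=1$ and $c$ carries at most two derivatives of $\varphi$, one has $\|g_\lambda\|_{L^1(0,T;L^2(\Omega))}\leq C\|\varphi\|_{H^2(\R^n)}$, so the energy estimate (\ref{EQ2.3}) immediately yields $\|\p_tr_\lambda^+(\cdot,t)\|_{L^2(\Omega)}+\|\nabla r_\lambda^+(\cdot,t)\|_{L^2(\Omega)}\leq C\|\varphi\|_{H^2}$, which controls the second term on the left of (\ref{EQ2.10}).

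The hard part will be the gain of the factor $\lambda$ in the first term $\lambda\|r_\lambda^+\|_{L^2}$, which cannot come from the energy estimate alone (that only gives $\|r_\lambda^+\|_{L^2}=O(1)$) and must be extracted from the oscillation. The key is $\p_te^{i\lambda\psi}=i\lambda e^{i\lambda\psi}$ (the phase is non-degenerate in $t$, since $\p_t\psi=1$), which lets me write $g_\lambda=\p_tG_\lambda-H_\lambda$ with $G_\lambda=\tfrac1{i\lambda}c\,e^{i\lambda\psi}$ and $H_\lambda=\tfrac1{i\lambda}(\p_tc)e^{i\lambda\psi}$, both of size $O(\lambda^{-1})$; note that $H_\lambda$ now involves $\p_tc$, i.e. three derivatives of $\varphi$, which is precisely the origin of the $H^3$--norm in (\ref{EQ2.10}). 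The term $H_\lambda$ is handled directly by (\ref{EQ2.3}). For the $\p_tG_\lambda$--term I would let $w$ solve $Lw=G_\lambda$ with zero data and take $r_\lambda^+$ to be, up to corrections, $-\p_tw$; since $\p_t$ does not commute with the time-dependent operator $L$, this produces commutator terms $(\p_ta)\p_tw+(\p_tb)w$ and a nonzero Cauchy velocity of size $O(\lambda^{-1})$, all of which are removed by one further use of the well-posedness in Lemma \ref{Lemma2.1} while preserving the homogeneous boundary and initial conditions. The gain is then transparent: $\|\p_tw\|_{L^2}$ is controlled by (\ref{EQ2.3}) in terms of $\|G_\lambda\|_{L^1(0,T;L^2(\Omega))}=O(\lambda^{-1})\|\varphi\|_{H^2}$, whence $\|r_\lambda^+(\cdot,t)\|_{L^2}\leq C\lambda^{-1}\|\varphi\|_{H^3}$. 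Keeping careful track of these commutators and of the corrected Cauchy data, each shown to be $O(\lambda^{-1})$, is the delicate bookkeeping at the heart of the argument.
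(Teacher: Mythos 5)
Your proposal is correct, and it rests on the same two pillars as the paper's proof: first, the cancellation of the $\lambda^{2}$ and $\lambda^{1}$ terms via the eikonal identity and the transport equation $(\p_{t}-\omega\cdot\nabla)A^{+}=-\tfrac{1}{2}aA^{+}$ (which you state correctly; the paper's displayed version of this equation contains a typo in the coefficient of $\omega\cdot\nabla$), and second, the non-stationary phase identity $e^{i\lambda(x\cdot\omega+t)}=\tfrac{1}{i\lambda}\p_{t}e^{i\lambda(x\cdot\omega+t)}$ to gain the factor $\lambda^{-1}$ on $\|r^{+}_{\lambda}\|_{L^{2}}$, with the passage from $H^{2}$ to $H^{3}$ caused by the extra time derivative landing on the amplitude. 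Your implementation of the second step is, however, the mirror image of the paper's. The paper first produces $r^{+}$ from Lemma \ref{Lemma2.1}, then passes to its time antiderivative $w(x,t)=\int_{0}^{t}r^{+}(x,s)\,ds$, derives an equation for $w$ with source $F_{1}+F_{2}$, where $F_{1}=\int_{0}^{t}g(x,s)\,ds$ becomes $O(\lambda^{-1})$ after an integration by parts in $s$ and $F_{2}$ is a commutator involving $r^{+}=\p_{t}w$ itself, and then invokes Gronwall's lemma to absorb $F_{2}$ and conclude $\|r^{+}\|_{L^{2}}=\|\p_{t}w\|_{L^{2}}\leq C\|F_{1}\|$. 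You instead split the source as $\p_{t}G_{\lambda}-H_{\lambda}$ with $G_{\lambda},H_{\lambda}=O(\lambda^{-1})$, solve the auxiliary problem with source $G_{\lambda}$, and assemble $r^{+}_{\lambda}$ as the time derivative of that auxiliary solution plus a corrector absorbing $H_{\lambda}$, the commutators $(\p_{t}a)\p_{t}w+(\p_{t}b)w$ and the residual Cauchy velocity $G_{\lambda}(\cdot,0)$. Your route avoids Gronwall altogether, since your commutators depend on the already-controlled $w$ rather than on the unknown, which is a gain in transparency; the price is that you must justify that $\p_{t}w$ solves the differentiated equation, i.e.\ you need $w\in\mathcal{C}^{2}([0,T];L^{2}(\Omega))\cap\mathcal{C}^{1}([0,T];H^{1}_{0}(\Omega))$, which requires the (available) extra time regularity of $G_{\lambda}$ together with compatibility of the zero data --- a step the paper's integration in time sidesteps entirely. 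Two small points to tidy up: the corrector carries a nonzero initial velocity, so you need the energy estimate for inhomogeneous Cauchy data rather than Lemma \ref{Lemma2.1} verbatim; and your observation that $a$ must be extended off $\overline{Q}_{r}$ for the ray integrals in (\ref{EQ2.8}) to be defined is a genuine, if routine, point that the paper leaves implicit.
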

\begin{proof}{}
We proceed as in the proof of a similar result in \cite{[R10]}. We put
$$g(x,t)=-\Big(\p_{t}^{2}-\Delta +a(x,t) \p_{t}+b(x,t) \Big)\Big(\phi(x,t)A^{+}(x,t)e^{i\lambda(x \cdot\omega +t)}\Big).$$
 In light of (\ref{EQ2.6}) and (\ref{EQ2.7}), it will be enough to prove the existence of $r^{+}=r_{\lambda}^{+}$
satisfying
\begin{equation}\label{EQ2.11}
\left\{
  \begin{array}{ll}
    \Big(\p_{t}^{2}-\Delta +a(x,t) \p_{t}+b(x,t) \Big)r^{+}=g(x,t), &  \\
\\
   r^{+}(x,0)=\p_{t}r^{+}(x,0)=0 , &  \\
\\
    r^{+}(x,t)=0, &
  \end{array}
\right.
\end{equation}
and obeying the  estimate (\ref{EQ2.10}).
 From (\ref{EQ2.5}) and using the fact that $A^{+}(x,t)$ solves the following equation
$$
 2(\p_{t}-2\omega\cdot\nabla) A^{+}(x,t)=-a(x,t) A^{+}(x,t),
$$
we obtain the following identity
$$
g(x,t)=-e^{i\lambda(x\cdot\omega+t)}\Big(\p_{t}^{2}-\Delta +a(x,t)\p_{t}+b(x,t) \Big)\Big(\phi(x,t)A^{+}(x,t) \Big)
=-e^{i\lambda(x\cdot\omega+t)}g_{0}(x,t),
$$
where $g_{0}\in L^{1}(0,T,L^{2}(\Omega))$. Thus, in view of Lemma
\ref{Lemma2.1}, there exists a unique solution
$$r^{+}\in \mathcal{C}([0,T];H^{1}_{0}(\Omega))\cap \mathcal{C}^{1}([0,T];L^{2}(\Omega)),$$
satisfying (\ref{EQ2.11}). Let us now define by $w$ the following function
\begin{equation}\label{EQ2.12}
w(x,t)=\int_{0}^{t}r^{+}(x,s)\,ds.
\end{equation}
We integrate the equation (\ref{EQ2.11}) over $[0,t]$, for $t\in(0,T)$. Then,
in view of (\ref{EQ2.12}), we have
$$\begin{array}{lll}
\Big(\p_{t}^{2}-\Delta +a(x,t) \p_{t}+b(x,t)
\Big)w(x,t)&=&\displaystyle\int_{0}^{t}g(x,s)\,ds+\int_{0}^{t}\Big(b(x,t)-b(x,s)\Big)r^{+}(x,s)\,ds\\
& &+\displaystyle\int_{0}^{t}\p_{s}a(x,s)r^{+}(x,s)\,ds.
\end{array}$$
Therefore,  $w$ is a solution to the following equation
$$
\left\{
  \begin{array}{ll}
   \Big( \p_{t}^{2}-\Delta +a(x,t) \p_{t}+b(x,t)\Big)w(x,t)=F_{1}(x,t)+F_{2}(x,t)  & \mbox{in}\,\,\,Q, \\
\\
    w(x,0)=0=\p_{t}w(x,0) & \mbox{in}\,\,\,\Omega, \\
\\
    w(x,t)=0 & \mbox{on}\,\,\,\Sigma,
  \end{array}
\right.
$$
where $F_{1}$ and $F_{2}$ are given by
\begin{equation}\label{EQ2.13}
F_{1}(x,t)=\int_{0}^{t}g(x,s)\,ds,
\end{equation}
and
$$F_{2}(x,t)=\int_{0}^{t}\Big(b(x,t)-b(x,s)\Big)r^{+}(x,s)\,ds+\int_{0}^{t}\p_{s}a(x,s)r^{+}(x,s)\,ds.$$
Let $\tau\in [0,T]$. Applying Lemma \ref{Lemma2.1} on the interval
$[0,\tau]$, we get
$$\begin{array}{lll}
\|\p_{t}w(\cdot,\tau)\|^{2}_{L^{2}(\Omega)}
&\leq& C \Big( \|F_{1}\|_{L^{2}(0,T;L^{2}(\Omega))}^{2}+T \,\para{M_{1}^{2}+4M_{2}^{2}}\displaystyle\int_{0}^{\tau}\!\!
\displaystyle\int_{\Omega}\!\displaystyle\int_{0}^{t}|r^{+}(x,s)|^{2}\,ds\,dx\,dt \Big).
\end{array}$$
From (\ref{EQ2.12}), we get
$$\begin{array}{lll}
\|\p_{t}w(\cdot,\tau)\|^{2}_{L^{2}(\Omega)}&\leq& C
 \Big(\|F_{1}\|_{L^{2}(0,T;L^{2}(\Omega))}^{2}+\displaystyle\int_{0}^{\tau}\!\!\!\int_{0}^{t}\|\p_{s}w(\cdot,s)\|_{L^{2}(\Omega)}^{2}\,ds\,dt
 \Big)\\
&\leq& C\Big( \|F_{1}\|^{2}_{L^{2}(0,T;L^{2}(\Omega))}+T \displaystyle\int_{0}^{\tau}\|\p_{s}w(\cdot,s)\|^{2}_{L^{2}(\Omega)}\,ds \Big).
\end{array}$$
Therefore, from Gronwall's Lemma, we find out that
$$\|\p_{t}w(\cdot,\tau)\|_{L^{2}(\Omega)}^{2}\leq C \|F_{1}\|^{2}_{L^{2}(0,T;L^{2}(\Omega))}.$$
As a consequence, in light of (\ref{EQ2.12}), we conclude that
$\|r^{+}(\cdot,t)\|_{L^{2}(\Omega)}\leq C
\|F_{1}\|_{L^{2}(0,T;L^{2}(\Omega))}.$ Further, according to (\ref{EQ2.13}),
$F_{1}$ can be written as follows
$$F_{1}(x,t)=\int_{0}^{t}g(x,s)\,ds=\frac{1}{i\lambda} \int_{0}^{t}\,g_{0}(x,s)\p_{s}(e^{i\lambda(x\cdot\omega+s)})\,ds.$$
Integrating by parts with respect to $s$, we conclude that there exists a
positive constant $C>0$ such that
$$\|r^{+}(\cdot,t)\|_{L^{2}(\Omega)}\leq \frac{C}{\lambda}\|\varphi\|_{H^{3}(\R^{n})}.$$
Finally, since $\|g\|_{L^{2}(0,T;L^{2}(\Omega))}\leq C \|\varphi\|_{H^{3}(\R^{n})}$ , the
energy estimate (\ref{EQ2.3}) associated to  the problem (\ref{EQ2.11})
yields
$$\|\p_{t}r^{+}(\cdot,t)\|_{L^{2}(\Omega)}+\|\nabla r^{+}(\cdot,t)\|_{L^{2}(\Omega)}\leq C \|\varphi\|_{H^{3}(\R^{n})}. $$
This completes the proof of the lemma.
\end{proof}
As a consequence we have the following lemma
\begin{Lemm}\label{Lemma2.3}
Let $(a_{i},b_{i})\in \mathcal{A}(M_{1},M_{2})$, $i=1,\,2$. Given $\omega\in
\mathbb{S}^{n-1}$ and $\varphi\in \mathcal{C}_{0}^{\infty}(\R^{n})$, we
consider  the function $\phi$  defined by (\ref{EQ2.4}). Then, the following
equation
\begin{equation}\label{EQ2.14}
\p_{t}^{2}u-\Delta u -a(x,t) \p_{t}u+b(x,t)u=0\,\,\,\,\,\mbox{in}\,\,\,Q,
\end{equation}
admits a unique solution
$$u^{-}\in \mathcal{C}([0,T];H^{1}(\Omega))\cap \mathcal{C}^{1}([0,T];L^{2}(\Omega)),$$
of the following form
\begin{equation}\label{EQ2.15}
u^{-}(x,t)=\varphi(x+t\omega)A^{-}(x,t)e^{-i\lambda(x\cdot\omega+t)}+{r}_{\lambda}^{-}(x,t),
\end{equation}
where ${A}^{-}(x,t)$ is given by
\begin{equation}\label{EQ2.16}
{A}^{-}(x,t)=\exp\Big(\frac{1}{2}\int_{0}^{t}a(x+(t-s)\omega,s)\,ds\Big),
\end{equation}
and ${r}_{\lambda}^{-}(x,t)$ satisfies
\begin{equation}\label{EQ2.17}
r_{\lambda}^{-}(x,T)=\p_{t}
{r}_{\lambda}^{-}(x,T)=0,\,\,\,\mbox{in}\,\,\,\Omega,\,\,\,\,\,\,{r}_{\lambda}^{-}(x,t)=0\,\,\,\mbox{on}\,\,\,\Sigma.
\end{equation}
Moreover, there exists a constant $C>0$ such that
\begin{equation}\label{EQ2.18}
\lambda\|{r}_{\lambda}^{-}(\cdot,t)\|_{L^{2}(\Omega)}+\|\p_{t} {r}_{\lambda}^{-}(\cdot,t)\|_{L^{2}(\Omega)}\leq C \|\varphi\|_{H^{3}(\R^{n})}.
\end{equation}
\end{Lemm}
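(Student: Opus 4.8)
The plan is to mirror the proof of Lemma \ref{Lemma2.2}, the only genuinely new feature being that the remainder now carries zero \emph{final} data at $t=T$ rather than zero initial data at $t=0$. Writing $\psi^{-}(x,t)=\varphi(x+t\omega)A^{-}(x,t)e^{-i\lambda(x\cdot\omega+t)}$ and seeking $u^{-}=\psi^{-}+r_{\lambda}^{-}$, I would first substitute the ansatz into (\ref{EQ2.14}) and collect the source term $g(x,t)=-\big(\p_{t}^{2}-\Delta-a\p_{t}+b\big)\psi^{-}$. A direct computation using the Leibniz rule and $|\omega|=1$ shows that the top-order contribution (of order $\lambda^{2}$) cancels automatically, while the first-order contribution (of order $\lambda$) is proportional to $-2(\p_{t}-\omega\cdot\nabla)(\phi A^{-})+a\,\phi A^{-}$.

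The first key point is that this $O(\lambda)$ term vanishes by the very choice of $A^{-}$. Indeed, $\phi=\varphi(x+t\omega)$ solves the transport equation (\ref{EQ2.5}), and differentiating (\ref{EQ2.16}) along the characteristic field $\p_{t}-\omega\cdot\nabla$ gives $2(\p_{t}-\omega\cdot\nabla)A^{-}=a\,A^{-}$ (the sign being opposite to that for $A^{+}$, as forced by the $-a\p_{t}$ term in (\ref{EQ2.14})). Hence $(\p_{t}-\omega\cdot\nabla)(\phi A^{-})=\tfrac12 a\,\phi A^{-}$, the $O(\lambda)$ term drops, and one is left with $g=-e^{-i\lambda(x\cdot\omega+t)}g_{0}$, where $g_{0}=\big(\p_{t}^{2}-\Delta-a\p_{t}+b\big)(\phi A^{-})$ is independent of $\lambda$ and lies in $L^{1}(0,T;L^{2}(\Omega))$ thanks to the $\mathcal{C}^{2}$, resp.\ $\mathcal{C}^{1}$, regularity of $a$, resp.\ $b$, and the smoothness of $\varphi$.

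Next I would solve the backward problem for $r_{\lambda}^{-}$. Since Lemma \ref{Lemma2.1} is stated for forward Cauchy data, I reverse time by setting $v(x,t)=r_{\lambda}^{-}(x,T-t)$: then $v$ satisfies $\p_{t}^{2}v-\Delta v+\tilde a\,\p_{t}v+\tilde b\,v=\tilde g$ with $\tilde a(x,t)=a(x,T-t)$, $\tilde b(x,t)=b(x,T-t)$, $\tilde g(x,t)=g(x,T-t)$, subject to the \emph{initial} conditions $v(x,0)=\p_{t}v(x,0)=0$ and $v|_{\Sigma}=0$. This is exactly the forward setting of Lemma \ref{Lemma2.1}, which yields a unique $v\in\mathcal{C}([0,T];H^{1}_{0}(\Omega))\cap\mathcal{C}^{1}([0,T];L^{2}(\Omega))$, hence a unique $r_{\lambda}^{-}$ obeying (\ref{EQ2.17}).

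Finally, to upgrade to the gain-of-$\lambda$ estimate (\ref{EQ2.18}) I would repeat the primitive-plus-Gronwall argument of Lemma \ref{Lemma2.2}, now applied to $v$: introduce $w(x,t)=\int_{0}^{t}v(x,s)\,ds$, integrate the equation in time, absorb the commutator terms coming from $\tilde a,\tilde b$ by Gronwall's inequality, and bound $\|v(\cdot,t)\|_{L^{2}(\Omega)}$ by $\|\int_{0}^{t}\tilde g\,ds\|_{L^{2}(0,T;L^{2}(\Omega))}$. The crucial observation is that $\tilde g=-e^{-i\lambda(x\cdot\omega+T-t)}g_{0}(x,T-t)$ is still oscillatory in $t$ at frequency $\lambda$, so writing $\p_{s}e^{i\lambda s}=i\lambda e^{i\lambda s}$ and integrating by parts in $s$ produces the factor $1/\lambda$, giving $\|v(\cdot,t)\|_{L^{2}(\Omega)}\le C\lambda^{-1}\|\varphi\|_{H^{3}(\R^{n})}$; the companion bound $\|\p_{t}v(\cdot,t)\|_{L^{2}(\Omega)}\le C\|\varphi\|_{H^{3}(\R^{n})}$ follows from the energy estimate (\ref{EQ2.3}) since $\|g_{0}\|_{L^{2}(0,T;L^{2}(\Omega))}\le C\|\varphi\|_{H^{3}(\R^{n})}$. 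Undoing the time reversal transfers these bounds to $r_{\lambda}^{-}$ and yields (\ref{EQ2.18}). I expect no serious obstacle: the argument is a time-reversed copy of Lemma \ref{Lemma2.2}, and the only thing to watch is the bookkeeping of signs — in the amplitude ODE, in the $-a\p_{t}$ term, and in the time reversal — which must stay consistent so that the $O(\lambda)$ cancellation survives.
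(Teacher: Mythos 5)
Your proposal is correct and follows essentially the same route as the paper: the paper's own proof is a one-line reduction observing that $r^{-}(x,T-t)$ solves the forward problem (\ref{EQ2.11}) with source $\widetilde{g}(x,T-t)$ and coefficients $a(x,T-t)$, $b(x,T-t)$, so that Lemma \ref{Lemma2.2} applies verbatim. Your additional verifications — the sign flip $2(\p_{t}-\omega\cdot\nabla)A^{-}=a\,A^{-}$ forced by the $-a\p_{t}$ term, and the persistence of the oscillatory factor after time reversal — are exactly the bookkeeping the paper leaves implicit, and they check out.
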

\begin{proof}{}
We prove this result by proceeding  as in the proof of Lemma \ref{Lemma2.2}.
Putting
$$\widetilde{g}(x,t)=-\Big(\p_{t}^{2}-\Delta -a(x,t)
\p_{t}+b(x,t) \Big)\Big(\phi(x,t){A}^{-}(x,t)e^{-i\lambda(x
\cdot\omega +t)}\Big).$$ Then, it would be enough to see that if ${r}^{-}=r_{\lambda}^{-}$
is solution to the following system
$$\begin{array}{lll}
\left\{
  \begin{array}{ll}
    \Big(\p_{t}^{2}-\Delta -a(x,t)\p_{t}+b(x,t)  \Big){r}^{-}(x,t)=\widetilde{g}(x,t) & \mbox{in}\,\,\,Q, \\
\\
    {r}^{-}(x,T)=0=\p_{t}{r}^{-}(x,T) & \mbox{in}\,\,\,\Omega,\\
\\
    {r}^{-}(x,t)=0& \mbox{on}\,\,\,\Sigma,\\
  \end{array}
\right.
\end{array}$$
then, $r^{+}(x,t)={r}^{-}(x,T-t)$ is a solution to (\ref{EQ2.11}) with
$g(x,t)=\widetilde{g}(x,T-t)$ and $a(x,t)$, $b(x,t)$ are replaced by
$a(x,T-t)$ and  $b(x,T-t)$.
\end{proof}
\section{ Determination of coefficients from boundary measurements}\label{Sec3}
In this section we prove  stability estimates for the absorbing coefficient
$a$ and the potential $b$ appearing in the initial boundary value problem
(\ref{EQ1}) by the use of the geometrical optics solutions constructed in
section \ref{Sec2} and the light-ray transform. We assume that Supp
$\varphi\subset \mathcal{A}_{r}$, in such a way we have
$$\mbox{Supp}\varphi\cap \Omega=\emptyset\,\,\,\,\,\mbox{and}\,\,\,\,\,(\mbox{Supp}\varphi \pm T\omega)\cap \Omega=\emptyset,\,\,\,
\forall\,\omega\in \mathbb{S}^{n-1}.$$
\subsection{Stability for the absorbing coefficient}\label{subsection3.1}
The present section is devoted to the proof of Theorem \ref{Theorem1}. Our
goal here is to show that the time dependent coefficient $a$ depends stably
on the Dirichlet-to-Neumann map $\Lambda_{a,b}$. In the rest of this section,
we define $a$ in $\R^{n+1}$ by $a=a_{2}-a_{1}$ in $\overline{Q}_{r}$ and
$a=0$ on $\R^{n+1}\setminus \overline{Q}_{r}$. We start by collecting a
preliminary estimate which is needed to prove the main statement of this
section.
\subsubsection{Preliminary estimate}
The main purpose of this section is to give a preliminary estimate, which
relates the difference of the absorbing coefficients to the
Dirichlet-to-Neumann map. Let $\omega\in\mathbb{S}^{n-1}$, and
$(a_{i},b_{i})\in\mathcal{A}$ such that $(a_{1},b_{1})=(a_{2},b_{2})$ in
$\overline{Q}_{r}\setminus Q_{r,*}$. We set
$$a=a_{2}-a_{1},\,\,\,\,\,b=b_{2}-b_{1},\,\,\mbox{and}\,\,A(x,t)
=(A^{-}A^{+})(x,t)=\exp\Big(-\displaystyle\frac{1}{2}\displaystyle\int_{0}^{t}a(x+(t-s)\omega,s)\,ds\Big).$$
Here, we recall the definition of $A^{-}$ and $A^{+}$
$$A^{-}(x,t)=\exp\Big( \frac{1}{2}\int_{0}^{t}a_{1}(x+(t-s)\omega,s)\,ds \Big),\,\,\,\,
A^{+}(x,t)=\exp\Big(-\frac{1}{2}\int_{0}^{t}a_{2}(x+(t-s)\omega,s)\,ds\Big).$$
The main result of this section can be stated as follows
\begin{Lemm}\label{Lemma3.1}
Let $(a_{i},b_{i})\in \mathcal{A}(M_{1},M_{2})$, $i=1,\,2$. There exists
$C>0$ such that for any $\omega\in\mathbb{S}^{n-1}$ and $\varphi\in
\mathcal{C}_{0}^{\infty}(\mathcal{A}_{r})$, the following estimate holds true
$$\Big|\int_{\R^{n}}\varphi^{2}(y)\Big[ \exp\Big( -\frac{1}{2}\int_{0}^{T}a(y-s\omega,s)\,ds \Big)-1\Big]\,dy  \Big|\leq C\Big(\lambda^{2}
\|\Lambda_{a_{2},b_{2}}-\Lambda_{a_{1},b_{1}}\|+\frac{1}{\lambda}
\Big)\|\varphi\|_{H^{3}(\R^{n})}^{2},$$ for any sufficiently large $\lambda>0$.  Here $C$ depends only on
$\Omega$, $T$, $M_{1}$ and $M_{2}$.
\end{Lemm}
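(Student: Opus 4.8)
The plan is to derive this estimate by testing the Dirichlet-to-Neumann map against the geometric optics solutions constructed in the previous section, following the standard pairing strategy for such inverse problems. Concretely, let $u_2 = u^+$ be the solution of the forward equation (\ref{EQ2.6}) with coefficient $a_2$ of the form (\ref{EQ2.7}), and let $u_1$ solve the retrograde-type equation with coefficient $a_1$; I would use the solution $u^-$ of Lemma \ref{Lemma2.3} associated with $a_1$ (note the sign flip in the damping term), so that the two exponential phases $e^{i\lambda(x\cdot\omega+t)}$ and $e^{-i\lambda(x\cdot\omega+t)}$ cancel upon multiplication. Let $w = u_2 - v$, where $v$ solves the equation with coefficients $(a_1,b_1)$ and the same Dirichlet data $f = u_2|_\Sigma$ and zero initial data. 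Then $w$ satisfies a wave equation whose source term is $(a_2-a_1)\p_t u_2 + (b_2-b_1)u_2 = a\,\p_t u_2 + b\,u_2$, with zero Cauchy data at $t=0$ and boundary data $w|_\Sigma = 0$, while $\p_\nu w|_\Sigma = (\Lambda_{a_2,b_2}-\Lambda_{a_1,b_1})f$.

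The next step is to multiply this equation for $w$ by $\overline{u^-}$ (the GO solution for $a_1$) and integrate over $Q$, integrating by parts twice in the spatial and time variables. Because $u^-$ solves the homogeneous retrograde equation, the volume term from the wave operator acting on $u^-$ vanishes, and the boundary contributions at $t=0,T$ collapse thanks to the matching initial/final conditions (\ref{EQ2.9}) and (\ref{EQ2.17}); the lateral boundary term reduces to $\int_\Sigma \p_\nu w\,\overline{u^-}\,d\sigma dt$, which is controlled by $\|\Lambda_{a_2,b_2}-\Lambda_{a_1,b_1}\|$ times the norms of $f$ and the trace of $u^-$. The upshot is an identity of the form
\begin{equation*}
\int_Q \big(a\,\p_t u_2 + b\,u_2\big)\,\overline{u^-}\,dx\,dt = \int_\Sigma \p_\nu w\,\overline{u^-}\,d\sigma\,dt,
\end{equation*}
whose right-hand side is $O(\lambda^2\|\Lambda_{a_2,b_2}-\Lambda_{a_1,b_1}\|\,\|\varphi\|^2_{H^3})$ after using the energy bound (\ref{estim energie}) and the fact that $u^-$ carries a factor $e^{-i\lambda(\cdot)}$ whose $H^1(\Sigma)$ trace grows like $\lambda$.

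Now I substitute the explicit forms (\ref{EQ2.7}) and (\ref{EQ2.15}) into the left-hand side. The leading term comes from $a\,\p_t u_2$: the factor $\p_t u_2$ produces a dominant contribution $i\lambda\,\phi A^+ e^{i\lambda(x\cdot\omega+t)}$, which when paired with $\overline{u^-} = \phi A^- e^{-i\lambda(x\cdot\omega+t)} + \overline{r_\lambda^-}$ cancels the oscillation and leaves $i\lambda\int_Q a\,\phi^2 A^+A^-\,dx\,dt = i\lambda\int_Q a\,\phi^2 A\,dx\,dt$. All remaining terms — those involving $b\,u_2$, the amplitude derivatives, and every product containing a remainder $r_\lambda^\pm$ — are lower order by the estimates (\ref{EQ2.10}) and (\ref{EQ2.18}), contributing $O(\|\varphi\|^2_{H^3})$ after dividing by $\lambda$. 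Dividing through by $i\lambda$ and recognizing $\phi(x,t)=\varphi(x+t\omega)$, the main integral becomes $\int_Q a(x,t)\,\varphi^2(x+t\omega)\,A(x,t)\,dx\,dt$; the key algebraic manipulation is to integrate in $t$ along the characteristic direction and use that $\p_t A = -\tfrac12 a A$ along $x+(t-s)\omega$ lines, so that $a\,\varphi^2 A$ is an exact $t$-derivative of the antiderivative of $A$ and the $t$-integral telescopes to the difference of exponentials evaluated at the endpoints, producing the expression $\exp\!\big(-\tfrac12\int_0^T a(y-s\omega,s)\,ds\big)-1$ after the change of variable $y = x+T\omega$ and using the support condition on $\varphi$.

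The main obstacle, and the step requiring the most care, is the telescoping identity that converts the weighted integral $\int a\,\varphi^2 A$ into the boundary expression $\exp(-\tfrac12\int_0^T a\,ds)-1$: one must exploit precisely the transport equation satisfied by $A$ (namely $2(\p_t - \omega\cdot\nabla)A = -aA$, as used in Lemma \ref{Lemma2.2}) together with the fact that $\phi$ is constant along the same characteristics, so that the integrand is a total derivative along the flow $x\mapsto x+t\omega$. The support hypothesis $\mathrm{Supp}\,\varphi\subset\mathcal{A}_r$ with $(\mathrm{Supp}\,\varphi\pm T\omega)\cap\Omega=\emptyset$ is what guarantees the boundary terms of this integration land outside $\Omega$ and reduce cleanly, so I would verify it enters correctly. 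Tracking the exact power of $\lambda$ — that the error is $O(1/\lambda)$ while the DtN term carries $\lambda^2$ — comes from balancing the single gain of $1/\lambda$ from the remainders against the factor $\lambda$ lost in the $H^1(\Sigma)$ trace of the oscillatory boundary data, and I expect this bookkeeping to be the other delicate point.
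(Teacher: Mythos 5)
Your proposal follows essentially the same route as the paper: pairing the forward geometric optics solution for $(a_2,b_2)$ against the backward one for $a_1$ via the difference of solutions with common Dirichlet data, extracting the leading $i\lambda\int_Q a\,\varphi^2 A$ term, and telescoping the $t$-integral along characteristics after the change of variables. Only cosmetic slips appear (the pairing should use $u^-$ itself rather than its conjugate so the phases cancel, the change of variables is $y=x+t\omega$ rather than $y=x+T\omega$, and the paper's trace-theorem bookkeeping actually yields $\lambda^3$ before dividing by $\lambda$), none of which affects the argument.
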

\begin{proof}{}
In view of Lemma \ref{Lemma2.2}, and using the fact that Supp
$\varphi\cap\Omega=\emptyset$, there exists a geometrical optics solution
$u^{+}$ to the equation
$$
\left\{
    \begin{array}{ll}
      \p_{t}^{2}u^{+}-\Delta u^{+}+a_{2}(x,t)\p_{t}u^{+}+b_{2}(x,t)u^{+}=0 & \mbox{in}\,\,\,Q, \\
\\
      u^{+}(x,0)=\p_{t}u^{+}(x,0)=0 & \mbox{in}\,\,\,\Omega,
    \end{array}
  \right.$$
in the following form
\begin{equation}\label{EQ3.19}
u^{+}(x,t)=\varphi(x+t\omega)A^{+}(x,t)e^{i\lambda(x\cdot\omega+t)}+r_{\lambda}^{+}(x,t),
\end{equation}
corresponding to the coefficients $a_{2}$ and $b_{2}$,  where $r^{+}(x,t)$
satisfies (\ref{EQ2.9}), (\ref{EQ2.10}). Next, let us denote by $f_{\lambda}$
the function
$$f_{\lambda}(x,t)= u^{+}(x,t)_{|\Sigma}=\varphi(x+t\omega) A^{+}(x,t)e^{i\lambda(x\cdot\omega+t)}.$$
We denote by $u_{1}$ the solution of
$$\left\{
  \begin{array}{ll}
    \p_{t}^{2}u_{1}-\Delta u_{1}+a_{1}(x,t)\p_{t}u_{1}+b_{1}(x,t)u_{1}=0 & \mbox{in}\,\,\,Q, \\
\\
    u_{1}(x,0)=\p_{t}u_{1}(x,0)=0 & \mbox{in}\,\,\,\Omega, \\
\\
    u_{1}=f_{\lambda} & \mbox{on}\,\,\,\Sigma.
  \end{array}
\right.
$$
Putting $u=u_{1}-u^{+}$. Then, $u$ is a solution to the following system
\begin{equation}\label{EQ3.20}
\left\{
  \begin{array}{ll}
    \p_{t}^{2}u-\Delta u +a_{1}(x,t) \p_{t}u+b_{1}(x,t)u=a(x,t)\p_{t}u^{+}+b(x,t)u^{+} & \mbox{in}\,\,\,Q, \\
\\
    u(x,0)=\p_{t}u(x,0)=0 & \mbox{in}\,\,\,\Omega, \\
\\
    u(x,t)=0 & \mbox{on}\,\,\,\Sigma.
  \end{array}
\right.
\end{equation}
 where $a=a_{2}-a_{1}$ and $b=b_{2}-b_{1}$. On the other hand Lemma \ref{Lemma2.3} and the fact that (Supp $\varphi\pm
 T\omega)\cap\Omega=\emptyset$,
  guarantee the existence of a
geometrical optic solution $u^{-}$ to the backward problem of (\ref{EQ1})
$$
\left\{
  \begin{array}{ll}
    \p_{t}^{2}u^{-}-\Delta u^{-} -a_{1}(x,t)\p_{t}u^{-}+(b_{1}(x,t)-\p_{t} a_{1}(x,t)) u^{-} =0 & \mbox{in}\,\,\,Q, \\
\\
    u^{-}(x,T)=0=\p_{t}u^{-}(x,T) & \mbox{in}\,\,\,\,\Omega,
  \end{array}
\right.
$$
corresponding to the coefficients $a_{1}$ and $(-\p_{t} a_{1}+b_{1})$, in the
form
\begin{equation}\label{EQ3.21}
u^{-}(x,t)=\varphi(x+t\omega)e^{-i\lambda(x\cdot\omega+t)}A^{-}(x,t)+r_{\lambda}^{-}(x,t),
\end{equation}
 where $r_{\lambda}^{-}(x,t)$
satisfies (\ref{EQ2.17}), (\ref{EQ2.18}). Multiplying the first equation of
(\ref{EQ3.20}) by $u^{-}$, integrating by parts and using Green's formula, we
obtain
\begin{eqnarray}\label{EQ3.22}
\,\,\,\,\,\,\,\,\,
\displaystyle\int_{0}^{T}\!\!\!\int_{\Omega}a(x,t)
\p_{t}u^{+}\,u^{-}\,dx\,dt+\displaystyle\int_{0}^{T}\!\!\!\int_{\Omega}b(x,t)u^{+}\,u^{-}\,dx\,dt
&=&\displaystyle\int_{0}^{T}\!\!\!\int_{\Gamma} (\Lambda_{a_{2},b_{2}}-\Lambda_{a_{1},b_{1}}) (f_{\lambda}) \,u^{-}\,d\sigma\,dt.
\end{eqnarray}
On the other hand, by replacing $u^{+}$ and $u^{-}$ by their expressions, we
have
$$\begin{array}{lll}
&&\displaystyle\int_{0}^{T}\!\!\!\int_{\Omega}
a(x,t)\p_{t}u^{+}\,u^{-}\,dx\,dt=\displaystyle\int_{0}^{T}\!\!\!\int_{\Omega}a(x,t)\p_{t}\varphi(x+t\omega)e^{i\lambda(x\cdot\omega+t)}A^{+}
r_{\lambda}^{-}\,dx\,dt\\
&&+\displaystyle\int_{0}^{T}\!\!\!\int_{\Omega}a(x,t)\varphi(x+t\omega)e^{i\lambda(x\cdot\omega+t)}\p_{t}A^{+}r_{\lambda}^{-}\,dx\,dt
+\displaystyle\int_{0}^{T}\!\!\!\int_{\Omega}\!\!a(x,t)\p_{t}\varphi(x+t\omega)\varphi(x+t\omega)(A^{+}A^{-})dx\,dt\\
&&\,\,\,\,\,\,\,\,\,\,\,\,+\displaystyle\int_{0}^{T}\!\!\!\int_{\Omega}a(x,t)\varphi^{2}(x+t\omega) \p_{t}A^{+}A^{-}\,dx\,dt
+\,i\lambda \displaystyle\int_{0}^{T}\!\!\!\int_{\Omega}a(x,t) \varphi(x+t\omega)e^{i\lambda(x\cdot\omega+t)}A^{+}
r_{\lambda}^{-}\,dx\,dt\\
&&\,\,\,\,\,\,\,\,\,\,\,\,\,\,\,\,\,\,+\displaystyle\int_{0}^{T}\!\!\!\int_{\Omega}a(x,t)\varphi(x+t\omega)e^{-i\lambda(x.\omega+t)}A^{-}
\p_{t}r_{\lambda}^{+}dx\,dt
+i\lambda\displaystyle\int_{0}^{T}\!\!\!\int_{\Omega}a(x,t)\varphi^{2}  (x+t\omega)(A^{+}A^{-})\,dx\,dt\\
&&\,\,\,\,\,\,\,\,\,\,\,\,\,\,\,\,\,\,\,\,\,\,\,\,\,\,\,\,\,\,\,\,\,\,\,\,\,\,\,\,\,+\displaystyle\int_{0}^{T}\!\!\!\int_{\Omega}
a(x,t)\p_{t}r_{\lambda}^{+}r_{\lambda}^{-}\,dx\,dt
=i\lambda\displaystyle\int_{0}^{T}\!\!\!\int_{\Omega}a(x,t)\varphi^{2}(x+t\omega)A\,dx\,dt+\mathcal{I}_{\lambda},
\end{array}$$
where $A=A^{+}A^{-}$. In light of (\ref{EQ3.22}), we have
\begin{equation}\label{EQ3.23}
i\lambda\displaystyle\int_{0}^{T}\!\!\!\int_{\Omega} a(x,t)\varphi^{2}(x+t\omega)A(x,t)\,dx\,dt
=\displaystyle\int_{0}^{T}\!\!\!\int_{\Gamma}(\Lambda_{a_{2},b_{2}}-\Lambda_{a_{1},b_{1}})(f_{\lambda})\,u^{-}\,d\sigma\,dt
-\int_{0}^{T}\!\!\!\int_{\Omega}b(x,t) u^{+}u^{-}\,dx\,dt
-\mathcal{I}_{\lambda}.
\end{equation}
Note that for $\lambda$ sufficiently large, we have
\begin{equation}\label{EQ3.24}
\|u^{+}u^{-}\|_{L^{1}(Q)}\leq C \|\varphi\|^{2}_{H^{3}(\R^{n})},\,\,\,\mbox{and}\,\,\,\,\,\,\,\,|\mathcal{I}_{\lambda}|\leq C
\|\varphi\|_{H^{3}(\R^{n})}^{2}.
\end{equation}
On the other hand, since on $\Sigma$, we have $u^{+}=f_{\lambda}$ and
$r^{-}_{\lambda}=r^{+}_{\lambda}=0$, then, we get the following estimate
\begin{eqnarray}\label{EQ3.25}
\Big|\displaystyle\int_{0}^{T}\!\!\!\int_{\Gamma}(\Lambda_{a_{2},b_{2}}-\Lambda_{a_{1},b_{1}})(f_{\lambda})\,u^{-}\,d\sigma\,dt\Big|&\leq&
\|\Lambda_{a_{2},b_{2}}-\Lambda_{a_{1},b_{1}}\|\,\|f_{\lambda}\|_{H^{1}(\Sigma)}\|u^{-}\|_{L^{2}(\Sigma)}\cr
\cr
&\leq& \|\Lambda_{a_{2},b_{2}}-\Lambda_{a_{1},b_{1}}\|\,\|u^{+}-r_{\lambda}^{+}\|_{H^{2}(Q)}\,\|u^{-}-r_{\lambda}^{-}\|_{H^{1}(Q)}\cr
\cr
&\leq& C\lambda^{3}\|\Lambda_{a_{2},b_{2}}-\Lambda_{a_{1},b_{1}}\|\,\|\varphi\|_{H^{3}(\R^{n})}^{2}.
\end{eqnarray}
Consequently, by (\ref{EQ3.23}), (\ref{EQ3.24}) and (\ref{EQ3.25}), we obtain
$$\Big|\displaystyle\int_{0}^{T}\!\!\!\int_{\Omega}
a(x,t)\varphi^{2}(x+t\omega)A(x,t)\,dx\,dt \Big|\leq C\Big(\lambda^{2}
\|\Lambda_{a_{2},b_{2}}-\Lambda_{a_{1},b_{1}}\|+\frac{1}{\lambda}
\Big)\|\varphi\|_{H^{3}(\R^{n})}^{2},$$
 where $A(x,t)=\exp\Big(-\displaystyle\frac{1}{2}\displaystyle\int_{0}^{t}a(x+(t-s)\omega,s)\,ds\Big)$.
Then, using the fact $a(x,t)=0$ outside $Q_{r,*}$ and making this change of
variables $y=x+t\omega$, one gets the following estimation
$$\Big|\int_{0}^{T}\!\!\!\int_{\R^{n}} a(y-t\omega,t)\varphi^{2}(y)\exp\Big(-\frac{1}{2}\int_{0}^{t}a(y-s\omega,s)\,ds   \Big)\,dy\,dt  \Big|\leq
C\Big(\lambda^{2}
\|\Lambda_{a_{2},b_{2}}-\Lambda_{a_{1},b_{1}}\|+\frac{1}{\lambda}
\Big)\|\varphi\|_{H^{3}(\R^{n})}^{2}.$$
Bearing in mind that
$$\begin{array}{lll}
\displaystyle\int_{0}^{T}\!\!\!\int_{\R^{n}}a(y-t\omega,t)\!\!\!&\varphi^{2}(y)&\!\!\!
\exp\Big( -\displaystyle\frac{1}{2}\int_{0}^{t}a(y-s\omega,s)\,ds\Big)\,dy\,dt\cr
&=&\!\!\!\!\!\!\!\!\!\!\!\!-2\displaystyle\int_{0}^{T}\!\!\!\int_{\R^{n}}\varphi^{2}(y)
\frac{d}{dt}\Big[ \exp\Big( -\displaystyle\frac{1}{2}\int_{0}^{t}a(y-s\omega,s)\,ds  \Big)
\Big]\,dy\,dt\cr &=&\!\!\!\!\!\!\!\!\!\!\!\!-2\displaystyle\int_{\R^{n}}\varphi^{2}(y)\Big[
\exp \Big( -\frac{1}{2}\int_{0}^{T}a(y-s\omega,s)\,ds  \Big)-1\Big]\,dy,
\end{array}$$
we conclude the desired estimate given by
$$\Big|\int_{\R^{n}}\varphi^{2}(y)\Big[ \exp\Big( -\frac{1}{2}\int_{0}^{T}a(y-s\omega,s)\,ds \Big)-1 \Big]\,dy  \Big|\leq C\Big(\lambda^{2}
\|\Lambda_{a_{2},b_{2}}-\Lambda_{a_{1},b_{1}}\|+\frac{1}{\lambda}
\Big)\|\varphi\|_{H^{3}(\R^{n})}^{2}.$$
  This completes the proof of the lemma.
\end{proof}

\subsubsection{Stability for the light-ray transform} The light-ray transform
$\mathcal{R}$ maps a function $f\in L^{1}(\R^{n+1})$ defined in $\R^{n+1}$
into the set of its line integrals. More precisely, if $\omega\in
\mathbb{S}^{n-1}$ and $(x,t)\in \R^{n+1}$, the function
$$\mathcal{R}(f)(x,\omega):=\int_{\R}f(x-t\omega,t)\,dt,$$
is the integral of $f$ over the lines $\{(x-t\omega,t),\,\,t\in\R\}$. The
goal in this section is to obtain an estimate that links the light-ray
transform of the absorbing coefficient $a=a_{2}-a_{1}$ to the measurement
$\Lambda_{a_{2},b_{2}}-\Lambda_{a_{1},b_{1}}$ on a precise set. Using the
above lemma, we can control the light-ray transform of $a$ as follows:
\begin{Lemm}\label{Lemma3.2}
 Let $(a_{i},b_{i})\in \mathcal{A}(M_{1},M_{2})$, $i=1,\,2$. There exist $C>0,$
$\delta>0$, $\beta>0$ and $\lambda_{0}>0$ such that for all $\omega\in
\mathbb{S}^{n-1},$ we have
$$|\mathcal{R}(a)(y,\omega)|\leq C \Big( \lambda^{\delta}\|\Lambda_{a_{2},b_{2}}-\Lambda_{a_{1},b_{1}}\|+\frac{1}{\lambda^{\beta}}  \Big)
,\,\,\,\,\,\,\mbox{a.e.}\, y\in\R^{n},$$
for any $\lambda\geq\lambda_{0}$. Here $C$ depends only on $\Omega$, $T$,
$M_{1}$ and $M_{2}$.
\end{Lemm}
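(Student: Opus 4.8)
The plan is to upgrade the integral inequality of Lemma \ref{Lemma3.1} to a pointwise bound on $\mathcal{R}(a)$ by testing it against a family of cut-offs that concentrate at a point. First I would note that, since $a=a_2-a_1$ vanishes outside $Q_{r,*}\subset Q=\Omega\times(0,T)$, the defining $t$-integral of the light-ray transform collapses to $[0,T]$; hence, setting
$$g(y):=\int_0^T a(y-s\omega,s)\,ds,$$
we have $g(y)=\mathcal{R}(a)(y,\omega)$, and Lemma \ref{Lemma3.1} reads
$$\Big|\int_{\R^n}\varphi^2(y)\big(e^{-g(y)/2}-1\big)\,dy\Big|\leq C\Big(\lambda^2\|\Lambda_{a_2,b_2}-\Lambda_{a_1,b_1}\|+\tfrac1\lambda\Big)\|\varphi\|_{H^3(\R^n)}^2 .$$
Since each $a_i\in\mathcal{C}^2(\overline{Q}_r)$ has norm bounded by $M_1$, the function $g$ is Lipschitz with constant depending only on $M_1$ and $T$; this regularity is what I will use to control the smoothing error below.

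Next I fix a point $y_0\in\mathcal{A}_r$, a bump $\chi\in\mathcal{C}_0^\infty(\R^n)$, and set $\varphi_h(y)=h^{-n/2}\chi\big((y-y_0)/h\big)$ with $h>0$ small enough that $\mbox{Supp}\,\varphi_h\subset\mathcal{A}_r$. A scaling computation gives $\|\varphi_h\|_{L^2}^2=\|\chi\|_{L^2}^2$ and $\|\varphi_h\|_{H^3(\R^n)}^2\leq C\,h^{-6}$ for $h\leq1$. After the change of variables $z=(y-y_0)/h$, the left-hand side above becomes $\int\chi^2(z)\big(e^{-g(y_0+hz)/2}-1\big)\,dz$, which by the Lipschitz bound on $g$ differs from $\|\chi\|_{L^2}^2\big(e^{-g(y_0)/2}-1\big)$ by at most $C\,h$. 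Putting these together yields
$$\big|e^{-g(y_0)/2}-1\big|\leq C\Big(\big(\lambda^2\|\Lambda_{a_2,b_2}-\Lambda_{a_1,b_1}\|+\tfrac1\lambda\big)h^{-6}+h\Big).$$

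I would then optimize the two parameters. Choosing $h=\lambda^{-1/7}$ balances the terms $\lambda^{-1}h^{-6}$ and $h$, both of size $\lambda^{-1/7}$, while $\lambda^2\|\cdot\|h^{-6}$ becomes $\lambda^{20/7}\|\cdot\|$; this gives
$$\big|e^{-g(y_0)/2}-1\big|\leq C\Big(\lambda^{\delta}\|\Lambda_{a_2,b_2}-\Lambda_{a_1,b_1}\|+\lambda^{-\beta}\Big),\qquad \delta=\tfrac{20}{7},\ \beta=\tfrac17 .$$
Finally, since $\|a\|_{L^\infty}\leq 2M_1$ forces $|g(y_0)|\leq 2M_1T$, and since on the bounded interval $[-2M_1T,2M_1T]$ the map $x\mapsto e^{-x/2}-1$ is a bi-Lipschitz homeomorphism (its derivative is bounded away from zero), I can invert to obtain $|\mathcal{R}(a)(y_0,\omega)|=|g(y_0)|\leq C\,|e^{-g(y_0)/2}-1|$, which is exactly the claimed estimate; at points $y_0$ where $\mathcal{R}(a)$ vanishes the bound is trivial, so it holds for a.e.\ $y\in\R^n$.

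The step I expect to be the main obstacle is this concentration-and-optimization argument: extracting the pointwise value $g(y_0)$ from an average against $\varphi^2$ forces a sharp cut-off, which inflates $\|\varphi\|_{H^3}$ like $h^{-6}$, so the decay $\lambda^{-1}$ must be traded carefully against both the smoothing error $h$ and the growth factor $h^{-6}$. Squeezing a genuinely positive exponent $\beta$ out of this trade-off — rather than a merely qualitative conclusion — is the delicate point, and it is precisely where the Lipschitz control on $g$, i.e.\ the $\mathcal{C}^2$ hypothesis on $a$, is essential.
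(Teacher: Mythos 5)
Your argument is, up to its last sentence, the paper's own proof: the same concentrating family $\varphi_h(x)=h^{-n/2}\chi\big((x-y_0)/h\big)$ tested in Lemma \ref{Lemma3.1}, the same bounds $\|\varphi_h\|_{H^3(\R^n)}^2\leq Ch^{-6}$ and $O(h)$ smoothing error coming from the Lipschitz continuity of $y\mapsto\int_0^T a(y-s\omega,s)\,ds$ (which is where the $\mathcal{C}^1$ control from $\mathcal{A}(M_1,M_2)$ enters), the same balancing choice $h=\lambda^{-1/7}$ yielding $\delta=20/7$, $\beta=1/7$, and the same inversion of the exponential on a bounded interval (the paper phrases it as $|X|\leq e^{M}|e^{X}-1|$ for $|X|\leq M$; your bi-Lipschitz formulation is equivalent). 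All of this is correct and gives the stated estimate for every $y_0\in\mathscr{A}_r$.

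The genuine gap is the passage from $y_0\in\mathscr{A}_r$ to ``a.e.\ $y\in\R^{n}$.'' Lemma \ref{Lemma3.1} requires $\varphi\in\mathcal{C}_0^\infty(\mathscr{A}_r)$ (this is what guarantees $\mathrm{Supp}\,\varphi\cap\Omega=\emptyset$ and $(\mathrm{Supp}\,\varphi\pm T\omega)\cap\Omega=\emptyset$), so your concentration argument cannot be run at points $y_0\notin\mathscr{A}_r$. Your closing remark that ``at points where $\mathcal{R}(a)$ vanishes the bound is trivial'' presupposes precisely what must be proved, namely that $\mathcal{R}(a)(y,\omega)=0$ for every $y\notin\mathscr{A}_r$. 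This is where the hypotheses $a_1=a_2$ outside $Q_{r,*}$ and $T>2\,\mathrm{Diam}(\Omega)$ are actually used: since $Q_{r,*}\subset\mathscr{C}_r^{+}\cap\mathscr{C}_r^{-}$, for $|y|<r/2$ one has $|y-t\omega|\geq|t|-r/2$, so the line $\{(y-t\omega,t):t\in\R\}$ never meets $\mathscr{C}_r^{+}$ (for $t\leq r/2$ the condition $t>r/2$ already fails), hence never meets $Q_{r,*}$ and the integrand $a(y-t\omega,t)$ vanishes identically; similarly, for $|y|\geq T-r/2$ one has $|y-t\omega|\geq T-r/2-t$, so the line never meets $\mathscr{C}_r^{-}$. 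Without this light-cone argument the conclusion for a.e.\ $y\in\R^n$ does not follow from what you established, so you should add it (it is short, but it is a genuine step, not a formality).
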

\begin{proof}{}
Let $\psi\in\mathcal{C}_{0}^{\infty}(\R^{n})$ be a positive function which is
supported in the unit ball $B(0,1)$ and such that
$\|\psi\|_{L^{2}(\R^{n})}=1$. Define
\begin{equation}\label{EQ3.26}
\varphi_{h}(x)=h^{-n/2}\psi\Big(\frac{x-y}{h}\Big),
\end{equation} where $y\in\mathcal{A}_{r}$. Then, for $h>0$ sufficiently small we can verify that
$$
\mbox{Supp}\,\varphi_{h}\cap\Omega=\emptyset,\,\,\,\,\,\mbox{and}\,\,\,\,\mbox{Supp}\,\varphi_{h}\pm T\omega\cap \Omega=\emptyset.
$$
Moreover, we have
\begin{eqnarray}\label{EQ3.27}
\!\!\!\!\!\!\!\!\!&\Big|&\!\!\!\exp\Big[-\displaystyle\frac{1}{2}\displaystyle\int_{0}^{T}a(y-s\omega,s)\,ds
\Big] -1 \Big|=\Big|
\displaystyle\int_{\R^{n}}\varphi_{h}^{2}(x)\Big[ \exp\Big(-\frac{1}{2} \int_{0}^{T}a(y-s\omega,s)\,ds \Big)-1  \Big]\,dx\Big|\cr
&&\leq\,\,\,\,\,\,\,\,\,\,\Big| \displaystyle\int_{\R^{n}}\varphi_{h}^{2}(x)\Big[\exp\Big( -\frac{1}{2}
\displaystyle\int_{0}^{T}a(y-s\omega,s)\,ds \Big)
-\exp\Big(-\displaystyle\frac{1}{2}\int_{0}^{T}a(x-s\omega,s)\,ds\Big)   \Big]  \,dx\Big|\cr
 &&\,\,\,\,\,\,\,\,\,\,\,\,\,\,\,\,\,\,\,\,\,\,\,\,\,\,\,\,\,\,\,\,\,\,\,\,\,\,\,\,\,\,\,\,\,\,\,\,\,\,\,\,\,
 \,\,\,\,\,
 +\Big| \displaystyle\int_{\R^{n}} \varphi_{h}^{2}(x)\Big[\exp\Big(-\displaystyle\frac{1}{2}\int_{0}^{T}a(x-s\omega,s)\,ds  \Big)-1 \Big]
 dx\Big|.
\end{eqnarray}
Therefore, since we have
$$\begin{array}{lll} \Big| \!\exp\Big(
\!-\displaystyle\frac{1}{2}\!\int_{0}^{T}\!\!a(y-s\omega,s)ds \Big)
\!-\!\exp\Big(\! -\displaystyle\frac{1}{2}\!\int_{0}^{T}\!\!a(x-s\omega,s)ds
\! \Big) \Big|\!\!\!&\leq&\!\!\! C\Big|\displaystyle\int_{0}^{T}\!\!
a(y-s\omega,s) \!-\!a(x-s\omega,s)ds\Big|,\cr
\end{array}$$
and using the fact that
$$
\Big|\displaystyle\int_{0}^{T}\!\! \Big( a(y-s\omega,s)\!-\!a(x-s\omega,s)\Big) ds\Big|\leq C \,|y-x|,
$$
 we deduce upon applying Lemma \ref{Lemma2.3} with $\varphi=\varphi_{h}$ the following estimation
$$
\Big|\exp\Big(-\frac{1}{2}\int_{0}^{T}a(y-s\omega,s)\,ds  \Big)-1
\Big|\leq C\int_{\R^{n}}\varphi_{h}^{2}(x)\,|y-x|\,dx+C\Big(
\lambda^{2}\|\Lambda_{a_{2},b_{2}}-\Lambda_{a_{1},b_{1}}\|+\frac{1}{\lambda}
\Big)\|\varphi_{h}\|^{2}_{H^{3}(\R^{n})}.
$$
 On the other hand, we have
$$
\|\varphi_{h}\|_{H^{3}(\R^{n})}\leq C h^{-3}\,\,\,\,\,\,\,\mbox{and}\,\,\,\,\int_{\R^{n}}\varphi_{h}^{2}(x)|y-x|\,dx \leq C h.
$$
So that we end up getting the following inequality
$$
 \Big|\exp\Big(-\frac{1}{2}\int_{0}^{T}a(y-s\omega,s)\,ds  \Big)-1
\Big|\leq C \,h+C\Big(
\lambda^{2}\|\Lambda_{a_{2},b_{2}}-\Lambda_{a_{1},b_{1}}\|+\frac{1}{\lambda}
\Big)h^{-6}.
 $$
 Selecting $h$ small such that $h=1/\lambda h^{6}$,
that is $h=\lambda^{-1/7}$, we find two constants $\delta>0$ and $\beta>0$
such that
$$\Big| \exp \Big( -\frac{1}{2}\int_{0}^{T}a(y-s\omega,s)\,ds \Big)-1 \Big|\leq
C \Big[
\lambda^{\delta}\|\Lambda_{a_{2},b_{2}}-\Lambda_{a_{1},b_{1}}\|+\frac{1}{\lambda^{\beta}}
\Big].$$
 Now, using the fact that $|X|\leq e^{M}\,|e^{X}-1|$ for any $|X|\leq M$, we
deduce that
$$\Big| -\frac{1}{2}\int_{0}^{T}a(y-s\omega,s)\,ds \Big|\leq e^{M_{1}T}\Big| \exp\Big( -\frac{1}{2}\int_{0}^{T}a(y-s\omega,s)\,ds  \Big)-1
\Big|.$$ Hence, we conclude that for all $y\in \mathcal{A}_{r}$ and
$\omega\in\mathbb{S}^{n-1}$ we have
$$\Big|\int_{0}^{T}a(y-s\omega,s)\,ds \Big|\leq C \Big(
\lambda^{\delta}\|\Lambda_{a_{2},b_{2}}-\Lambda_{a_{1},b_{1}}\|+\frac{1}{\lambda^{\beta}}
\Big).$$
 Since $a=a_{2}-a_{1}=0$ outside $Q_{r,*}$, this entails that for all $y\in
 \mathcal{A}_{r}$, and $\,\omega\in\mathbb{S}^{n-1}$, we have
\begin{equation}\label{EQ3.28}
\left|\int_{\R}a(y-t\omega,t)\,dt\right|\leq\,C\,\Big(\lambda^{\delta}\|\Lambda_{a_{2},b_{2}}-\Lambda_{a_{1},b_{1}}\|+\frac{1}{\lambda^{\beta}}\Big).
\end{equation}
Moreover, if  $y\in B(0,r/2)$, we have $ |y-t\omega|\geq|t|-|y|\geq
|t|-\displaystyle\frac{r}{2}.$ Hence, one can see that $(y-t\omega,t)\notin
\mathscr{C}_{r}^{+}$ if $t>r/2$. On the other hand, we have
$(y-t\omega,t)\notin \mathscr{C}_{r}^{+}$ if $t\leq
\displaystyle\frac{r}{2}$. Thus, we conclude that $(y-t\omega,t)\notin
\mathscr{C}_{r}^{+}\supset Q_{r,*}$ for $t\in\R.$ This and the fact that
$a=a_{2}-a_{1}=0$ outside $Q_{r,*}$, entails that for all $y\in B(0,r/2)$ and
$\omega\in\mathbb{S}^{n-1}$, we have
$$
a(y-t\omega,t)=0,\,\,\forall\,t\in\R.
$$
By a similar way, we prove for $|y|\geq T-r/2$, that
$(y-t\omega,t)\notin\mathscr{C}_{r}^{-}\supset Q_{r,*}$ for $t\in\R$ and then
$a(y-t\omega,t)=0$. Hence, we conclude that
\begin{equation}\label{EQ3.29}
\int_{\R}a(y-t\omega,t)\,dt=0,\,\,\,\,\mbox{a.e.}\,\,y\notin\mathscr{A}_{r},\,\,\,\omega\in
\mathbb{S}^{n-1}.
\end{equation}
Thus, by (\ref{EQ3.28}) and (\ref{EQ3.29}) we finish the proof of the lemma
by getting
$$
|\mathcal{R}(a)(y,\omega)|=\left|\int_{\R}a(t,y-t\omega)\,dt\right|\leq\,
C\,\Big(\lambda^{\delta}\|\Lambda_{a_{2},b_{2}}-\Lambda_{a_{1},b_{1}}\|
+\frac{1}{\lambda^{\beta}}\Big),
\,\,\,\mbox{a.e}.\,\,y\in\R^{n},\,\,\,\omega\in\mathbb{S}^{n-1}.
$$
The proof of Lemma \ref{Lemma3.2} is complete.
\end{proof}

 Our goal  is to
obtain an estimate linking the Fourier transform with respect to $(x,t)$ of
the absorbing coefficient $a=a_{2}-a_{1}$ to the measurement
$\Lambda_{a_{2},b_{2}}-\Lambda_{a_{1},b_{1}}$ in this set
\begin{equation}\label{EQ3.30}
E=\{(\xi,\tau)\in\para{\R^{n}\setminus{\{0_{\R^{n}}\}}}\times\R,\,\,\,|\tau|<|\xi|\}.
\end{equation}
We denote by $\widehat{F}$ the Fourier transform of a function $F\in
L^{1}(\R^{n+1})$ with respect to $(x,t)$:
$$
\widehat{F}(\xi,\tau)=\int_{\R}\int_{\R^{n}}F(x,t){e^{-ix\cdot\xi}}e^{-it\tau}\,dx\,dt.
$$
We aim for proving that the Fourier transform of $a$  is bounded as follows:
\begin{Lemm}\label{Lemma3.3}
Let $(a_{i},b_{i})\in \mathcal{A}(M_{1},M_{2})$, $i=1,2$. There exist $C>0$,
$\delta>0,$ $\beta>0$ and $\lambda_{0}>0$, such that the following estimate
$$
|\widehat{a}(\xi,\tau)|\leq C\,\Big(\lambda^{\delta}\|\Lambda_{a_{2},b_{2}}-\Lambda_{a_{1},b_{1}}\|+\frac{1}{\lambda^{\beta}}\Big),
$$
holds for any $(\xi,\tau)\in E$ and $\lambda\geq \lambda_{0}$.
\end{Lemm}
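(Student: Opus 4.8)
The plan is to recover the Fourier transform $\widehat{a}(\xi,\tau)$ on the spacelike cone $E$ directly from the light-ray transform $\mathcal{R}(a)(\cdot,\omega)$ controlled in Lemma \ref{Lemma3.2}, by means of a Fourier-slice identity, and then to convert the pointwise bound of that lemma into a bound on $\widehat{a}$ using the compact support of $a$.

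First I would establish the elementary relation between the two transforms. Taking the Fourier transform of $y\mapsto\mathcal{R}(a)(y,\omega)$ in $\R^{n}$ and performing, for fixed $t$, the change of variables $x=y-t\omega$ (so that $dy=dx$), Fubini's theorem gives
$$\int_{\R^{n}}\mathcal{R}(a)(y,\omega)\,e^{-iy\cdot\xi}\,dy=\int_{\R}\int_{\R^{n}}a(x,t)\,e^{-i(x+t\omega)\cdot\xi}\,dx\,dt=\widehat{a}(\xi,\,\omega\cdot\xi).$$
All these manipulations are legitimate because $a=a_{2}-a_{1}$ is continuous with support contained in the bounded set $\overline{Q}_{r}$, hence $a\in L^{1}(\R^{n+1})$. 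This is the key identity: the Fourier transform in $y$ of the light-ray transform in the direction $\omega$ returns $\widehat{a}$ evaluated on the hyperplane $\tau=\omega\cdot\xi$.

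Next, given $(\xi,\tau)\in E$, that is $\xi\neq 0$ and $|\tau|<|\xi|$, I would select a direction $\omega\in\mathbb{S}^{n-1}$ realizing $\omega\cdot\xi=\tau$. Writing this as $\omega\cdot(\xi/|\xi|)=\tau/|\xi|$ and using $|\tau|/|\xi|<1$, such an $\omega$ exists (and, since $n\geq 2$, there is in fact a whole $(n-2)$-sphere of them). This is precisely where the defining constraint of $E$ enters, and it is the geometric reason why only the spacelike frequencies can be reached as $(\xi,\omega\cdot\xi)$; I expect this matching of $E$ with the range of the slice map to be the one genuinely substantive point of the argument. With this choice the slice identity yields $\widehat{a}(\xi,\tau)=\int_{\R^{n}}\mathcal{R}(a)(y,\omega)\,e^{-iy\cdot\xi}\,dy$.

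Finally I would pass from the pointwise estimate to the stated bound. By Lemma \ref{Lemma3.2}, $|\mathcal{R}(a)(y,\omega)|$ is bounded, uniformly in $y$ and $\omega$, by $C\big(\lambda^{\delta}\|\Lambda_{a_{2},b_{2}}-\Lambda_{a_{1},b_{1}}\|+\lambda^{-\beta}\big)$; moreover the argument in that lemma shows $\mathcal{R}(a)(y,\omega)=0$ for a.e.\ $y\notin\mathscr{A}_{r}$. Since $\mathscr{A}_{r}$ is a bounded annulus of finite Lebesgue measure, integrating the modulus over $y\in\mathscr{A}_{r}$ gives
$$|\widehat{a}(\xi,\tau)|\leq\int_{\mathscr{A}_{r}}|\mathcal{R}(a)(y,\omega)|\,dy\leq|\mathscr{A}_{r}|\,C\Big(\lambda^{\delta}\|\Lambda_{a_{2},b_{2}}-\Lambda_{a_{1},b_{1}}\|+\frac{1}{\lambda^{\beta}}\Big),$$
where the factor $|\mathscr{A}_{r}|$ is absorbed into the constant. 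This holds for every $(\xi,\tau)\in E$ and all $\lambda\geq\lambda_{0}$, which is exactly the assertion. Everything beyond the slice identity and the solvability of $\omega\cdot\xi=\tau$ on $E$ is a routine use of the compact support of $a$ together with Lemma \ref{Lemma3.2}.
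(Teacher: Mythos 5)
Your proposal is correct and follows essentially the same route as the paper: the Fourier-slice identity $\int_{\R^{n}}\mathcal{R}(a)(y,\omega)e^{-iy\cdot\xi}\,dy=\widehat{a}(\xi,\omega\cdot\xi)$, the choice of $\omega\in\mathbb{S}^{n-1}$ with $\omega\cdot\xi=\tau$ (which the paper writes out explicitly as $\omega=\frac{\tau}{|\xi|^{2}}\xi+\sqrt{1-\tau^{2}/|\xi|^{2}}\,\zeta$ for $\zeta\perp\xi$), and the reduction to an integral over a bounded set before invoking Lemma \ref{Lemma3.2}. The only cosmetic difference is that you restrict the $y$-integral to $\mathscr{A}_{r}$ via the vanishing of $\mathcal{R}(a)$ there, whereas the paper restricts to $B(0,r/2+T)$ via the support of $a$; both are equally valid.
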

\begin{proof}{}
Let $(\xi,\tau)\in E$ and $\zeta\in\mathbb{S}^{n-1}$ be such that
$\xi\cdot\zeta=0$. Setting
$$
\omega=\frac{\tau}{|\xi|^{2}}\cdot\xi+\sqrt{1-\frac{\tau^{2}}{|\xi|^{2}}}\cdot\zeta.
$$
Then, one can see that  $\omega\in\mathbb{S}^{n-1}$ and
${\omega\cdot\xi=\tau.}$ On the other hand  by the change of variable
$x=y-t\omega$, we have  for all $\xi\in\R^{n}$ and
$\omega\in\mathbb{S}^{n-1}$ the following identity
$$\begin{array}{lll}
\displaystyle\int_{\R^{n}}\mathcal{R}(a)(y,\omega)\,{e^{-iy\cdot\xi}}\,dy&=&\displaystyle\int_{\R^{n}}\displaystyle\Big(\int_{\R}a(y-t\omega,t)\,dt\Big)\,
{e^{-iy\cdot\xi}}\,dy\\
&=&\displaystyle\int_{\R}\int_{\R^{n}}a(x,t)\,e^{-ix\cdot\xi}{e^{-it\omega\cdot\xi}}\,dx\,dt\\
&=&\widehat{a}{(\xi,\omega\cdot\xi)}
=\widehat{a}(\xi,\tau),
\end{array}$$
where we have  set $(\xi,\tau)=(\xi,\omega\cdot\xi)\in E.$
 Bearing in mind that for any $t\in\R$, $\mbox{Supp}\,a(\cdot,t)\subset\Omega\subset B(0,r/2)$, we
 deduce that
 $$
 \int_{\R^{n}\cap B(0,\frac{r}{2}+T)}\mathcal{R}(a)(\omega,y)\,{e^{-iy\cdot\xi}}\,dy=\widehat{a}(\xi,\tau).
 $$
Then, in view of Lemma \ref{Lemma3.2}, we finish the proof of this lemma.
\end{proof}
\subsubsection{End of the proof of Theorem \ref{Theorem1}} We are now in position to complete the proof
of Theorem \ref{Theorem1}, using the result we have already obtained  and  an
analytic argument that is inspired by \cite{[R1]} and adapted for  our case .
For $\rho
>0$ and $\kappa\in (\mathbb{N}\cup\{0\})^{n+1}$, we put
 $$
 |\kappa|=\kappa_{1}+...+\kappa_{n+1},\,\,\,\,\,\,\,\,\,B(0,\rho)=\{x\in\R^{n+1},\,\,|x|<\rho\}.
 $$
 We state the following result which is proved in Appendix \ref{Appendix A} (see also \cite{[R32]}).
\begin{Lemm}\label{Lemma3.4}
Let $\mathcal{O}$ be a non empty open set of the unit ball $B(0,1)\subset
\R^{d}$, $d\geq2$, and let $F$ be an analytic function in $B(0,2),$ that
satisfy
$$\|\p^{\kappa}F\|_{L^{\infty}(B(0,2))}\leq \frac{M|\kappa|!}{(2\rho)^{|\kappa|}},\,\,\,\,\forall\,\kappa\in(\mathbb{N}\cup\{0\})^{d},$$
for some  $M>0$,  $\rho>0$ and $N=N(\rho)$. Then, we have
$$\|F\|_{L^{\infty}(B(0,1))}\leq N M^{1-\gamma}\|F\|_{L^{\infty}(\mathcal{O})}^{\gamma},$$
where $\gamma\in(0,1)$ depends on $d$, $\rho$ and $|\mathcal{O}|$.
\end{Lemm}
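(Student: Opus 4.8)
The plan is to prove this as a quantitative analytic continuation (propagation of smallness) estimate, in which $\|F\|_{L^{\infty}(\mathcal{O})}$ plays the role of a smallness parameter and $M$ that of a global bound. The only hypothesis that really matters is the Gevrey-type derivative control $\|\p^{\kappa}F\|_{L^{\infty}(B(0,2))}\le M|\kappa|!/(2\rho)^{|\kappa|}$: its purpose is to guarantee that $F$ extends to a bounded holomorphic function on a complex neighborhood of the real ball, after which the whole statement reduces to a problem about a single bounded holomorphic function of one complex variable.

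First I would complexify. Fixing any $x_0\in B(0,1)$ and summing the Taylor series $\sum_{\kappa}\p^{\kappa}F(x_0)(z-x_0)^{\kappa}/\kappa!$, the factorial bound gives absolute convergence and a uniform estimate $|\tilde F(z)|\le C M$ for complex $z$ with $|z-x_0|<\rho_0$, where $\rho_0=\rho_0(\rho,d)$ is a fixed fraction of $\rho$ (the combinatorial factor $\sum_{|\kappa|=k}|\kappa|!/\kappa!\le d^{k}$ is exactly what forces $\rho_0\sim\rho/d$). These local extensions patch to a single holomorphic $\tilde F$ on the tube $\{z\in\C^{d}:\operatorname{dist}(z,\overline{B(0,1)})<\rho_0\}$ with $\|\tilde F\|_{L^{\infty}}\le C M$. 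This is the only place the derivative hypothesis is used.

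Next comes the one–variable heart of the matter. I would record a propagation-of-smallness inequality for a function $g$ holomorphic on a disk $\{|w|<2\}$ with $|g|\le A$ there and $|g|\le\delta$ on a subset $e$ of positive linear measure of $(-1,1)$: then $|g(0)|\le C A^{1-\mu}\delta^{\mu}$ with $\mu=\mu(|e|)\in(0,1)$. This is proved from Hadamard's three–circle theorem (logarithmic convexity of $\log\max_{|w|=r}|g|$), upgraded to a set of positive measure by a Remez / harmonic-measure argument, and it is exactly here that the dependence on $|\mathcal{O}|$ enters. Applying it along complex lines issuing from points of $\mathcal{O}$, and using that $\tilde F$ is bounded by $CM$ on the tube, yields a local estimate $|F(x)|\le C M^{1-\theta}\|F\|_{L^{\infty}(\mathcal{O})}^{\theta}$ valid for $x$ in a fixed neighborhood of $\mathcal{O}$.

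Finally I would propagate this local bound to all of $B(0,1)$ by a chaining argument: cover a curve joining a point of $\mathcal{O}$ to an arbitrary $x\in B(0,1)$ by a finite number of overlapping balls of radius $\sim\rho_0$, and iterate the three–circle step from one ball to the next, each time feeding the estimate obtained at the previous step in place of the known smallness. Since $B(0,1)$ is connected and bounded, the number of steps is bounded by a constant $L=L(d,\rho,|\mathcal{O}|)$; composing the exponents produces a single $\gamma=\theta^{L}\in(0,1)$ while the global factor stays of size $CM$, and taking the supremum over $x$ gives the claim with $N=N(\rho)$ and $\gamma=\gamma(d,\rho,|\mathcal{O}|)$. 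I expect the main obstacle to be precisely this bookkeeping: keeping the prefactor and the exponent uniform through the iteration — both the passage from ``small on an open set'' to the one–variable measure statement with the correct dependence on $|\mathcal{O}|$, and the control that prevents $\gamma$ from degenerating to $0$ (or the prefactor from blowing up) as the chain lengthens.
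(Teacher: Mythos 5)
Your proposal is correct in outline and would yield the lemma, but it follows a genuinely different route from the paper's Appendix A in two respects. First, the base ``small on a subset implies small on a disc'' step: you invoke the two-constants theorem (Hadamard three circles upgraded by harmonic measure / a Remez-type argument for a set of positive measure), whereas the paper's Lemma \ref{LemmaA.1}, following Apraiz--Escauriaza, proves this for an \emph{interval} $J$ by Lagrange interpolation at $n+1$ well-spaced nodes of $J$, bounding the interpolation error by the residue theorem (Hermite's remainder formula), and optimizing over $n$; since $\mathcal{O}$ is open it contains a box, so the interval case suffices and your positive-measure generality is unused. Second, the reduction to one variable: you complexify $F$ to a tube in $\C^{d}$ and chain three-circle estimates along curves in the ball, while the paper never leaves one dimension in $\R^d$ --- it fixes a product of intervals $E=I_{1}\times\cdots\times I_{d}\subset\mathcal{O}$ and applies the one-dimensional Lemma \ref{LemmaA.2} (complexification of a single slice $s\mapsto F(x_1,\dots,s,\dots,x_d)$, the interpolation lemma on one subinterval $I_{j_0}\cap I$, then a Hadamard three-circle chain along $[-1,1]$) successively in each coordinate, composing the exponents $\gamma_{1}\cdots\gamma_{d}$. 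The two approaches buy different things: yours is more geometric and handles an arbitrary open (even merely positive-measure) $\mathcal{O}$ and non-axis-aligned geometry directly, at the cost of the harmonic-measure machinery and the multidimensional chaining bookkeeping you yourself flag as the delicate point; the paper's coordinate-by-coordinate slicing keeps every analytic step strictly one-dimensional and elementary (polynomials and the residue theorem), with the exponent degenerating only through the explicit product $\gamma_{1}\cdots\gamma_{d}$ and the length of the one-dimensional chain, which is controlled by $\rho$ alone. The steps you leave unproved (the two-constants inequality on a set of positive measure, and the uniformity of the prefactor and exponent along the chain) are standard but are precisely where the work lies, and the paper's proof can be read as one complete way of discharging them.
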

The above lemma claims conditional stability for the analytic continuation.
For classical results for this type, one can see  Lavrent'ev, Romanov and
Shishatski\v{\i} \cite{[R22]}.  For a fixed $\alpha>0$, we set
$F_{\alpha}(\tau,\xi)=\widehat {a}(\alpha(\xi,\tau))$, for all
 $(\xi,\tau)\in\R^{n+1}.$ It is easy to see that $F_{\alpha}$ is analytic and
we have
$$\begin{array}{lll} |\p^{\kappa} F_
{\alpha}(\xi,\tau)|=\left|\p^{\kappa}
\widehat{a}(\alpha(\xi,\tau))\right|&=&\left|\p^{\kappa}\displaystyle\int_{\R^{n+1}}a(x,t)\,{e^{-i\alpha
(t,x)\cdot(\xi,\tau)}}\,dx\,dt\right|\cr
&=&\left|\displaystyle\int_{\R^{n+1}}a(x,t)(-i)^{|\kappa|}\alpha^{|\kappa|}(x,t)^{\kappa}{e^{-i\alpha(x,t)\cdot(\xi,\tau)}}\,dx\,dt\right|.
\end{array}$$
This entails that
$$\begin{array}{lll}
|\p^{\kappa} F_ {\alpha}(\xi,\tau)|\leq \displaystyle\int_{\R^{n+1}}|a(x,t)| \alpha^{|\kappa|}(|x|^{2}+t^{2})^{\frac{|\kappa|}{2}}\,dx\,dt
\leq\|a\|_{L^{1}(Q_{r,*})}\,\,
\alpha^{|\kappa|}\,\,(2T^{2})^{\frac{|\kappa|}{2}}
\leq C \,\,\displaystyle\frac{|\kappa|!}{(T^{-1})^{|\kappa|}}\,\,e^{\alpha}.
\end{array}
$$
The, upon applying Lemma \ref{Lemma3.4} with $M=Ce^{\alpha}$, $2\rho=T^{-1},$
and $\mathcal{O}=\mathring{E}\cap B(0,1)$, where
$$
\mathring{E}=\{(\xi,\tau)\in\R\times\para{\R^{n}\setminus{\{0_{\R^{n}}\}}},\,\,\,|\tau|<|\xi|\},
$$
one  may find a constant $\mu\in(0,1)$ such that we have for all
$(\xi,\tau)\in B(0,1)$, the following estimation
$$
|F_{\alpha}(\xi,\tau)|=|\widehat{a}(\alpha(\xi,\tau))|\leq C e^{\alpha(1-\gamma)}\|F_{\alpha}\|_{L^{\infty}(\mathcal{O})}^{\gamma}.
$$
Now the idea is to find an estimate for the Fourier transform of $a$ in a
suitable ball. Using the fact that $\alpha\,\mathring{E}=\{\alpha(
\xi,\tau),\,(\xi,\tau)\in\mathring{E}\}=\mathring{E}$, we obtain for all
$(\xi,\tau)\in B(0,\alpha)$
\begin{eqnarray}\label{EQ3.31}
|\widehat{a}(\xi,\tau)|=|F_{\alpha}(\alpha^{-1}(\xi,\tau))|&\leq& C e^{\alpha(1-\gamma)}\,\|F_{\alpha}\|_{L^{\infty}(\mathcal{O})}^{\gamma}\cr
&\leq& C e^{\alpha(1-\gamma)} \|\,\widehat{a}\,\|^{\mu}_{L^{\infty}(B(0,\alpha)\cap\mathring{E})}\cr
&\leq&C e^{\alpha (1-\gamma)}\|\,\widehat{a}\,\|_{L^{\infty}(\mathring{E})}^{\gamma}.
\end{eqnarray}
The next step in the proof is to deduce an estimate that links the unknown
coefficient $a$ to the measurement
$\Lambda_{a_{2},b_{2}}-\Lambda_{a_{1},b_{1}}$. To obtain such estimate, we
need first to decompose the $H^{-1}(\R^{n+1})$ norm of $a$ into the following
way
$$\begin{array}{lll}
\|a\|_{H^{-1}(\R^{n+1})}^{2/\gamma}\!\!\!&=\!\!\!&\displaystyle\Big(\displaystyle\int_{|(\tau,\xi)|<\alpha}\!\!\!\!(1+|(\tau,\xi)|^{2})^{-1}
|\widehat{a}(\xi,\tau)|^{2}\, d\xi d\tau
+\!\!\displaystyle\int_{|(\xi,\tau)|\geq\alpha}\!\!\!\!\!(1+|(\tau,\xi)|^{2})^{-1}|\widehat{a}(\xi,\tau)|^{2}\, d\xi d\tau\,\Big)^{1/\gamma}\\
&\leq&  C\Big(\alpha^{n+1}\,\,\,\|\widehat{a}\|^{2}_{L^{\infty}(B(0,\alpha))}+\,\alpha^{-2}\,\,\|a\|^{2}_{L^{2}(\R^{n+1})}\Big)^{1/\gamma}.\\
\end{array}$$
Hence, in light of  (\ref{EQ3.31}) and Lemma \ref{Lemma3.3}, we get
$$\begin{array}{lll}
\|a\|^{2/\gamma}_{H^{-1}(\R^{n+1})}&\leq&\,C\displaystyle\Big(\alpha^{{n+1}}\,e^{2\alpha(1-\gamma)}\,(\lambda^{\delta}
\|\Lambda_{a_{2},b_{2}}-\Lambda_{a_{1},b_{1}}\|
+\frac{1}{\lambda^{\beta}})^{
2\gamma}+\alpha^{-2}\Big)^{1/\gamma}\\
&\leq&C\displaystyle\Big(\alpha^{\frac{n+1}{\gamma}} \,e^{\frac{2\alpha(1-\gamma)}{\gamma}}
\lambda^{2\beta}\|\Lambda_{a_{2},b_{2}}-\Lambda_{a_{1},b_{1}}\|^{2}
+\alpha^{\frac{n+1}{\gamma}}\,e^{\frac{2\alpha(1-\gamma)}{\gamma}}\,\lambda^{-2\beta}+\alpha^{-2/\gamma}\Big).
\end{array}$$
Let $\alpha_{0}>0$ be sufficiently large and assume that $\alpha>\alpha_{0}$.
Setting
$$
\lambda=\alpha^{\frac{n+3}{2\mu\gamma}}\,e^{\frac{\alpha(1-\mu)}{\mu\gamma}},
$$
and using the fact that  $\alpha>\alpha_{0},$ one can see that
$\lambda>\lambda_{0}$ and
$\alpha^{\frac{n+1}{\gamma}}\,e^{\frac{2\alpha(1-\gamma)}{\gamma}}\,\lambda^{-2\beta}=\alpha^{-2/\gamma}.$
This entails that
 $$\begin{array}{lll} \|a\|^{2/\gamma}_{H^{-1}(\R^{n+1})}&\leq&
C\Big(\alpha^{\frac{\beta(n+1)+\delta(n+3)}{\beta\gamma}}\,e^{\frac{2\alpha(\beta+\delta)(1-\gamma)}{\beta\gamma}}
\|\Lambda_{a_{2},b_{2}}-\Lambda_{a_{1},b_{1}}\|^{2}
+\alpha^{-2/\gamma}\Big)\\
&\leq&
C\,\displaystyle\Big(e^{N\alpha}\|\Lambda_{a_{2},b_{2}}-\Lambda_{a_{1},b_{1}}\|^{2}+\alpha^{-2/\gamma}\Big),
\end{array}$$
where $N$ depends on $\delta,\,\beta,\,n,$ and $\gamma$. The next step is to
minimize the right hand-side of the above inequality with respect to
$\alpha$. We need to take $\alpha$ sufficiently large. So, there exists a
constant $m>0$ such that if
$0<\|\Lambda_{a_{2},b_{2}}-\Lambda_{a_{1},b_{1}}\|<m$, and
 $$
 \alpha=\frac{1}{N}|\,\log\|\Lambda_{a_{2},b_{2}}-\Lambda_{a_{1},b_{1}}\|\,|,
 $$
then, we have the following estimation
\begin{eqnarray}\label{EQ3.32}
\|a\|_{H^{-1}(Q_{r,*})}\leq \|a\|_{H^{-1}(\R^{n+1})}&\leq&
C\Big(\|\Lambda_{a_{2},b_{2}}-\Lambda_{a_{1},b_{1}}\|+|\,\log\|\Lambda_{a_{2},b_{2}}-\Lambda_{a_{1},b_{1}}\|\,|^{-2/\gamma}\Big)^{\gamma/2}\cr
&\leq&C\Big(\|\Lambda_{a_{2},b_{2}}-\Lambda_{a_{1},b_{1}}\|^{\gamma/2}+|\log\|\Lambda_{a_{2},b_{2}}-\Lambda_{a_{1},b_{1}}\||^{-1}\Big).
\end{eqnarray}
Now if  $\|\Lambda_{a_{2},b_{2}}-\Lambda_{a_{1},b_{1}}\|\geq m$, we have
$$
\|a\|_{H^{-1}(Q_{r,*})}\leq C\|a\|_{L^{\infty}(Q_{r,*})}\leq \frac{2CMc^{{\gamma/2}}}{m^ {{\gamma/2}}}\leq
\frac{2CM}{m^{\gamma/2}}\|\Lambda_{a_{2},b_{2}}-\Lambda_{a_{1},b_{1}}\|^{\gamma/2},
$$
hence (\ref{EQ3.32})  holds. Let us now consider $\theta>1$ such that
$p:=s-1=\frac{n+1}{2}+2\theta.$ Use Sobolev's embedding theorem we find by
interpolating
$$\begin{array}{lll}
\|a\|_{L^{\infty}(Q_{r,*})}&\leq&C\|a\|_{H^{\frac{n}{2}+\theta}(Q_{r,*})}\\
&\leq& C \,\|a\|_{H^{-1}(Q_{r,*})}^{1-\eta}\,\|a\|_{H^{s-1}(Q_{r,*})}^{\eta}\\
&\leq&C\,\|a\|_{H^{-1}(Q_{r,*})}^{1-\eta},
\end{array}$$
for some $\eta\!\in\!(0,1)$. This completes the proof of Theorem
\ref{Theorem1}.
 This  will be a key ingredient in proving the result of the
next section.
\subsection{Stability for the potential}\label{subsection3.2}
This section is devoted to the proof of Theorem \ref{Theorem2}. By  means of
the geometrical optics solutions constructed in Section \ref{Sec2}, we will
show using the stability estimate we have already obtained for the absorbing
coefficient $a$, that the time dependent potential $b$ depends stably on the
Dirichlet-to-Neumann map $\Lambda_{a,b}$.  As before, given
$\omega\in\mathbb{S}^{n-1}$, $(a_{i},b_{i})\in\mathcal{A}(M_{1},M_{2})$
 such that $(a_{1},b_{1})=(a_{2},b_{2})$ in $\overline{Q}_{r}\setminus Q_{r,*}$, we set
$$
a=a_{2}-a_{1},\,\,\,\,b=b_{2}-b_{1}\,\,\,\,\,\mbox{and}\,\,\,\,\,\,\,A(x,t)
=(A^{-}A^{+})(x,t)=\exp\Big(-\displaystyle\frac{1}{2}\displaystyle\int_{0}^{t}a(x+(t-s)\omega,s)\,ds\Big),
$$
where $A^{-}$ and $A^{+}$ are given by
$$
A^{-}(x,t)=\exp\Big( \frac{1}{2}\int_{0}^{t}a_{1}(x+(t-s)\omega,s)\,ds \Big),\,\,\,\,A^{+}(x,t)
=\exp\Big(-\frac{1}{2}\int_{0}^{t}a_{2}(x+(t-s)\omega,s)\,ds\Big).
$$
In the rest of this section, we define $b$ in $\R^{n+1}$ by
 $b=b_{2}-b_{1}$ in $\overline{Q}_{r}$ and $b=0$ on $\R^{n+1}\setminus
\overline{Q}_{r}$. We start by giving a preliminary estimate that will be
used to prove the main statement of this section.
\begin{Lemm}\label{Lemma3.5}
Let $(a_{i},b_{i})\in \mathcal{A}(M_{1},M_{2})$, $i=1,\,2$. There exists
$C>0$ such that for any $\omega\in\mathbb{S}^{n-1}$ and $\varphi\in
\mathcal{C}_{0}^{\infty}(\mathcal{A}_{r})$, the following estimate holds
$$\begin{array}{lll}
\Big| \! \displaystyle\int_{0}^{T}\!\!\displaystyle\int_{\R^{n}} b(y-t\omega,t) \varphi^{2}(y)\,dy\,dt\Big|
\!\!\!\!&\leq&\!\!\!\!
C\Big(\lambda^{3}\|\Lambda_{a_{2},b_{2}}-\Lambda_{a_{1},b_{1}}\|+\lambda\|a\|_{L^{\infty}(Q_{r,*})}+\displaystyle\frac{1}{\lambda}  \Big)
\|\varphi\|^{2}_{H^{3}(\R^{n})},
\end{array}$$
for any $\lambda>0$ sufficiently large. Here $C$ depends only on $\Omega$,
$M_{1}$, $M_{2}$ and $T$.
\end{Lemm}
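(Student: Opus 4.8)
The plan is to recycle the entire geometric optics machinery from the proof of Lemma \ref{Lemma3.1}, but now to extract the \emph{zeroth-order} (in $\lambda$) contribution rather than the $\lambda$-order one. Concretely, I would take the same solutions $u^{+}$ and $u^{-}$ supplied by Lemma \ref{Lemma2.2} and Lemma \ref{Lemma2.3}, form $u=u_{1}-u^{+}$, and arrive at exactly the identity (\ref{EQ3.23}):
$$i\lambda\int_{0}^{T}\!\!\int_{\Omega}a\,\varphi^{2}(x+t\omega)A\,dx\,dt=\int_{0}^{T}\!\!\int_{\Gamma}(\Lambda_{a_{2},b_{2}}-\Lambda_{a_{1},b_{1}})(f_{\lambda})\,u^{-}\,d\sigma\,dt-\int_{0}^{T}\!\!\int_{\Omega}b\,u^{+}u^{-}\,dx\,dt-\mathcal{I}_{\lambda}.$$
Instead of solving this for the $a$-term, I would solve it for the potential term $\int_{0}^{T}\int_{\Omega}b\,u^{+}u^{-}\,dx\,dt$, which is precisely the quantity carrying the information about $b$.

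Second, I would estimate the three terms on the right. The boundary term is controlled exactly as in (\ref{EQ3.25}) by $C\lambda^{3}\|\Lambda_{a_{2},b_{2}}-\Lambda_{a_{1},b_{1}}\|\,\|\varphi\|_{H^{3}(\R^{n})}^{2}$. The term $i\lambda\int a\,\varphi^{2}A$ is bounded, using $|A|\le e^{M_{1}T}$ and the fact that $a$ vanishes off $Q_{r,*}$, by $C\lambda\|a\|_{L^{\infty}(Q_{r,*})}\|\varphi\|_{H^{3}(\R^{n})}^{2}$, which is the second term in the claimed estimate. The point I would stress is the treatment of $\mathcal{I}_{\lambda}$: every summand inside $\mathcal{I}_{\lambda}$ comes from expanding $\int a\,\p_{t}u^{+}u^{-}$ and therefore carries a factor of $a$. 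Hence, rather than the crude bound $|\mathcal{I}_{\lambda}|\le C\|\varphi\|_{H^{3}(\R^{n})}^{2}$ used in (\ref{EQ3.24}), I would keep the $a$ dependence and prove the sharper bound $|\mathcal{I}_{\lambda}|\le C\|a\|_{L^{\infty}(Q_{r,*})}\|\varphi\|_{H^{3}(\R^{n})}^{2}$, using the remainder estimates (\ref{EQ2.10}) and (\ref{EQ2.18}) for $r_{\lambda}^{\pm}$ exactly as before. Since $\lambda$ is large, $\|a\|_{L^{\infty}(Q_{r,*})}\le\lambda\|a\|_{L^{\infty}(Q_{r,*})}$, so $\mathcal{I}_{\lambda}$ is absorbed into the $\lambda\|a\|_{L^{\infty}(Q_{r,*})}$ term.

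Third, I would pass from $\int_{0}^{T}\int_{\Omega}b\,u^{+}u^{-}$ to the desired integral. Replacing $u^{+}u^{-}$ by its leading part $\varphi^{2}(x+t\omega)A(x,t)$ costs only the cross terms in $r_{\lambda}^{\pm}$, which are $O(1/\lambda)\|\varphi\|_{H^{3}(\R^{n})}^{2}$ by (\ref{EQ2.10}) and (\ref{EQ2.18}); this produces the last term $1/\lambda$. After the change of variables $y=x+t\omega$ (so that $A$ becomes $\exp(-\frac{1}{2}\int_{0}^{t}a(y-s\omega,s)\,ds)$), I would remove the amplitude $A$ by writing $b\,\varphi^{2}A=b\,\varphi^{2}+b\,\varphi^{2}(A-1)$ and using $|A-1|\le e^{M_{1}T}\big|\frac{1}{2}\int_{0}^{t}a(y-s\omega,s)\,ds\big|\le C\|a\|_{L^{\infty}(Q_{r,*})}$; again this error is $\le C\|a\|_{L^{\infty}(Q_{r,*})}\|\varphi\|_{H^{3}(\R^{n})}^{2}$ and is absorbed into the $\lambda\|a\|_{L^{\infty}(Q_{r,*})}$ term. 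Collecting the three bounds yields the statement.

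The main obstacle is bookkeeping rather than any new idea: because the $b$-term is now the leading object (order $\lambda^{0}$) instead of a negligible perturbation, I must verify that \emph{all} the discarded pieces, namely $\mathcal{I}_{\lambda}$ and the amplitude error $A-1$, genuinely carry a factor of $\|a\|_{L^{\infty}(Q_{r,*})}$ and that none of them contributes a stray $O(1)$ constant. If any term failed to vanish with $a$, the estimate would degrade to an uninformative $O(1)$ bound. This structure is exactly what the next section needs, since it lets the already-established control of $\|a\|_{L^{\infty}(Q_{r,*})}$ from Theorem \ref{Theorem1} be fed in to close the argument for $b$.
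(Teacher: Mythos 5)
Your proposal is correct and follows essentially the same route as the paper: the paper likewise isolates the $b$-term from the identity (\ref{EQ3.22}), bounds the boundary term as in (\ref{EQ3.25}), bounds the lumped remainder $I'_{\lambda}$ by $C(\lambda\|a\|_{L^{\infty}(Q_{r,*})}+1/\lambda)\|\varphi\|^{2}_{H^{3}(\R^{n})}$ exactly because every piece of $\int a\,\p_{t}u^{+}u^{-}$ carries a factor of $a$, and removes the amplitude via $\big|\int b\,\varphi^{2}(1-A)\big|\leq C\|a\|_{L^{\infty}(Q_{r,*})}\|\varphi\|^{2}_{H^{3}(\R^{n})}$ before the change of variables $y=x+t\omega$. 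Your explicit remark that $\mathcal{I}_{\lambda}$ must be bounded with the factor $\|a\|_{L^{\infty}(Q_{r,*})}$ (and not merely by $C\|\varphi\|^{2}_{H^{3}(\R^{n})}$ as in (\ref{EQ3.24}), which would leave a fatal $O(1)$ term) is precisely the point the paper's estimate (\ref{EQ3.34}) relies on.
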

\begin{proof}{}
We start with the identity (\ref{EQ3.22}), except this time we will isolate
the electric potential
$$
\int_{0}^{T}\!\!\!\int_{\Omega}b(x,t)u^{+}u^{-}\,dx\,dt=\int_{0}^{T}\!\!\!\int_{\Gamma} (\Lambda_{a_{2},b_{2}}-\Lambda_{a_{1},b_{1}})(f_{\lambda})
u^{-}\,d\sigma\,dt-\int_{0}^{T}\!\!\!\int_{\Omega}a(x,t)\p_{t}u^{+} u^{-}\,dx\,dt.
$$
By replacing $u^{+}$ and $u^{-}$ by their expressions we get
\begin{eqnarray}\label{EQ3.33}
&&\displaystyle\int_{0}^{T}\!\!\!\int_{\Omega}b(x,t)\varphi^{2}(x+t\omega)A(x,t)\,dx\,dt=\displaystyle\int_{0}^{T}\!\!\!\int_{\Gamma}
(\Lambda_{a_{2},b_{2}}-\Lambda_{a_{1},b_{1}})(f_{\lambda})u^{-}\,d\sigma\,dt\cr
\,\,\,
&&-\displaystyle\int_{0}^{T}\!\!\!\int_{\Omega}b(x,t)\varphi(x+t\omega)A^{-}(x,t)e^{-i\lambda(x\cdot\omega+t)}r_{\lambda}^{+}(x,t)\,dx\,dt
-\displaystyle\int_{0}^{T}\!\!\!\int_{\Omega}b(x,t)r_{\lambda}^{+}(x,t)r_{\lambda}^{-}(x,t)\,dx\,dt\cr
&&-\displaystyle\int_{0}^{T}\!\!\!\int_{\Omega}a(x,t)\p_{t}u^{+}u^{-}\,dx\,dt-\int_{0}^{T}\!\!\!\int_{\Omega}b(x,t) \varphi(x+t\omega)
A^{+}(x,t)
e^{i\lambda(x\cdot\omega+t)}r^{-}(x,t)\,dx\,dt\cr
&&\,\,\,\,\,\,\,\,\,\,\,\,\,\,\,\,\,\,\,\,\,\,\,\,\,\,\,\,\,\,\,\,\,\,\,\,\,\,\,\,\,\,\,\,\,\,\,\,\,\,\,\,\,\,\,\,\,\,\,\,\,\,\,\,\,\,\,\,\,\,\,\,\,
\,\,\,\,\,\,\,\,\,\,\,\,\,\,\,\,\,\,\,\,\,
= \displaystyle\int_{0}^{T}\!\!\!\int_{\Gamma}(\Lambda_{a_{2},b_{2}}-\Lambda_{a_{1},b_{1}})(f_{\lambda})
u^{-}\,d\sigma\,dt+I^{'}_{\lambda}.
\end{eqnarray}
Then, in view of (\ref{EQ3.33}),  we have
$$\begin{array}{lll}
\displaystyle\int_{0}^{T}\!\!\!\int_{\Omega}\!b(x,t)
\varphi^{2}(x+t\omega)dxdt=\!\!
\displaystyle\int_{0}^{T}\!\!\!\int_{\Omega}\!b(x,t)\varphi^{2}(x+t\omega)(1\!-\!A)dxdt
\!+\!\!\displaystyle\int_{0}^{T}\!\!\!\int_{\Gamma}\!(\Lambda_{a_{2},b_{2}}\!-\!\Lambda_{a_{1},b_{1}})(f_{\lambda})u^{-}d\sigma
dt\!+\!I^{'}_{\lambda}.
\end{array}$$
From (\ref{EQ2.10}), (\ref{EQ2.18}) and using the fact that $a=a_{2}-a_{1}=0$
outside $Q_{r,*}$, we find
\begin{equation}\label{EQ3.34}
|I'_{\lambda}|\leq C\Big( \lambda\,\|a\|_{L^{\infty}(Q_{r,*})}+\frac{1}{\lambda} \Big)\|\varphi\|^{2}_{H^{3}(\R^{n})}.
\end{equation}
By the trace theorem, we get
 \begin{eqnarray}\label{EQ3.35}
\Big|\int_{0}^{T}\!\!\!\int_{\Gamma}(\Lambda_{a_{2},b_{2}}-\Lambda_{a_{1},b_{1}} )(f_{\lambda}) u^{-}\,d\sigma\,dt
\Big|&\leq&\|\Lambda_{a_{2},b_{2}}-\Lambda_{a_{1},b_{1}}\|
\|f_{\lambda}\|_{H^{1}(\Sigma)}
\|u^{-}\|_{L^{2}(\Sigma)}\cr
&\leq& C \lambda^{3}\|\Lambda_{a_{2},b_{2}}-\Lambda_{a_{1},b_{1}}\|\,\|\varphi\|^{2}_{H^{3}(\R^{n})}.
\end{eqnarray}
On the other hand, we have
\begin{equation}\label{EQ3.36}
\Big|  \displaystyle\int_{0}^{T}\!\!\!\int_{\Omega}\!b(x,t)\varphi^{2}(x+t\omega)(1\!-\!A)\,dx\,dt \Big|
\leq C \|a\|_{L^{\infty}(Q_{r,*})}\,\|\varphi\|^{2}_{H^{3}(\R^{n})}.
\end{equation}
Then, in light of (\ref{EQ3.34})-(\ref{EQ3.36}), taking to account that
$b=b_{2}-b_{1}=0$ outside $Q_{r,*}$ and using the change of variables
$y=x+t\omega$ we get
$$\Big| \int_{0}^{T}\!\!\!\int_{\R^{n}}b(y-t\omega,t)\varphi^{2}(y)\,dy\,dt\Big|\leq C \Big( \lambda^{3}\|\Lambda_{a_{2},b_{2}}
-\Lambda_{a_{1},b_{1}}\|+\lambda\|a\|_{L^{\infty}(Q_{r,*})}+\frac{1}{\lambda}\Big)\|\varphi\|_{H^{3}(\R^{n})}^{2}.$$
This completes the proof of the Lemma.
\end{proof}
Now the idea is to deduce an estimate for the light ray transform of the
time-dependent unknown coefficient $b$ in order to control thereafter  its
 Fourier transform.
\begin{Lemm}\label{Lemma3.6}
Let $(a_{i},b_{i})\in\mathcal{A}(M_{1},M_{2})$, $i=1,\,2$. There exists
$C>0$, $\delta>0$, $\beta>0$ and $\lambda_{0}>0$ such that for all
$\omega\in\mathbb{S}^{n-1}$, the following estimate holds
 $$
 \Big| \mathcal{R}(b)(y,\omega)\Big|\leq C
 \Big(\lambda^{\delta}\|\Lambda_{a_{2},b_{2}}-\Lambda_{a_{1},b_{1}}\|+\lambda^{\delta}\|a\|_{L^{\infty}(Q_{r,*})}
+\frac{1}{\lambda^{\beta}}\Big),\,\,\,\,\,\,\,\mbox{a.\,e}\,y\in\R^{n},
$$
for any $\lambda>\lambda_{0}$. Here $C$ depends only on $\Omega$, $T$,
$M_{1}$ and $M_{2}$.
\end{Lemm}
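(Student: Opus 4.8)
The plan is to follow the strategy used in the proof of Lemma \ref{Lemma3.2}, replacing the preliminary estimate for $a$ by its analogue for $b$, namely Lemma \ref{Lemma3.5}, and to recover the pointwise value of the light-ray transform $\mathcal{R}(b)(y,\omega)=\int_{\R}b(y-t\omega,t)\,dt$ by testing against a concentrating family of functions. For a fixed $y\in\mathcal{A}_{r}$ and $\omega\in\mathbb{S}^{n-1}$, I would reuse the mollifiers $\varphi_{h}(x)=h^{-n/2}\psi((x-y)/h)$ from (\ref{EQ3.26}), which for $h>0$ small enough are supported in $\mathcal{A}_{r}$ and satisfy $(\mathrm{supp}\,\varphi_{h})\cap\Omega=\emptyset$ as well as $(\mathrm{supp}\,\varphi_{h}\pm T\omega)\cap\Omega=\emptyset$, so that Lemma \ref{Lemma3.5} applies with $\varphi=\varphi_{h}$. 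Writing $G(z)=\int_{0}^{T}b(z-t\omega,t)\,dt=\mathcal{R}(b)(z,\omega)$, where the $t$-integral over $[0,T]$ coincides with the one over $\R$ since $b$ vanishes for $t\notin[0,T]$, Lemma \ref{Lemma3.5} reads
$$\Big|\int_{\R^{n}}G(z)\,\varphi_{h}^{2}(z)\,dz\Big|\leq C\Big(\lambda^{3}\|\Lambda_{a_{2},b_{2}}-\Lambda_{a_{1},b_{1}}\|+\lambda\|a\|_{L^{\infty}(Q_{r,*})}+\frac{1}{\lambda}\Big)\|\varphi_{h}\|_{H^{3}(\R^{n})}^{2}.$$

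Next I would localize. Since $\|\psi\|_{L^{2}(\R^{n})}=1$ we have $\int_{\R^{n}}\varphi_{h}^{2}=1$, so $\varphi_{h}^{2}$ is an approximate identity at $y$ and $\int_{\R^{n}}G(z)\varphi_{h}^{2}(z)\,dz=G(y)+\int_{\R^{n}}(G(z)-G(y))\varphi_{h}^{2}(z)\,dz$. Because $b=b_{2}-b_{1}\in\mathcal{C}^{1}$ with $\mathcal{C}^{1}$-norm bounded by $2M_{2}$, the map $z\mapsto G(z)$ is Lipschitz uniformly in $\omega$, hence $|\int_{\R^{n}}(G(z)-G(y))\varphi_{h}^{2}(z)\,dz|\leq C\int_{\R^{n}}|z-y|\varphi_{h}^{2}(z)\,dz\leq Ch$, using the bound $\int_{\R^{n}}\varphi_{h}^{2}(z)|z-y|\,dz\leq Ch$ established in Lemma \ref{Lemma3.2}. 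Combining this with the displayed inequality and with $\|\varphi_{h}\|_{H^{3}(\R^{n})}^{2}\leq Ch^{-6}$ yields
$$|\mathcal{R}(b)(y,\omega)|\leq C\,h+C\Big(\lambda^{3}\|\Lambda_{a_{2},b_{2}}-\Lambda_{a_{1},b_{1}}\|+\lambda\|a\|_{L^{\infty}(Q_{r,*})}+\frac{1}{\lambda}\Big)h^{-6}.$$
As in Lemma \ref{Lemma3.2} I would then balance the $h$ and $\lambda^{-1}h^{-6}$ contributions by taking $h=\lambda^{-1/7}$, so that both are of order $\lambda^{-1/7}$; this produces $\beta=1/7$ and $\delta=27/7$ (the latter from $\lambda^{3}h^{-6}=\lambda^{27/7}$, which also absorbs the $\lambda\,h^{-6}=\lambda^{13/7}$ coming from the $\|a\|_{L^{\infty}(Q_{r,*})}$-term), giving the claimed estimate for every $y\in\mathcal{A}_{r}$ and $\lambda\geq\lambda_{0}$.

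Finally I would extend the bound to a.e.\ $y\in\R^{n}$ by the geometric support argument already used at the end of Lemma \ref{Lemma3.2}: if $y\in B(0,r/2)$ then $|y-t\omega|\geq|t|-r/2$ forces $(y-t\omega,t)\notin\mathscr{C}_{r}^{+}\supset Q_{r,*}$ for all $t$, and if $|y|\geq T-r/2$ the ray likewise avoids $\mathscr{C}_{r}^{-}\supset Q_{r,*}$; since $b$ vanishes outside $Q_{r,*}$, this gives $\mathcal{R}(b)(y,\omega)=0$ for $y\notin\mathcal{A}_{r}$, where the estimate then holds trivially. I expect no serious obstacle here: the argument is essentially a transcription of Lemma \ref{Lemma3.2}, the one structural simplification being that in Lemma \ref{Lemma3.5} the potential $b$ enters \emph{linearly} rather than through the exponential amplitude, so the linearization step $|X|\leq e^{M}|e^{X}-1|$ is unnecessary and $\mathcal{R}(b)$ is read off directly. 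The price is the additional term $\lambda\|a\|_{L^{\infty}(Q_{r,*})}$ in Lemma \ref{Lemma3.5}, which after optimization contributes the $\lambda^{\delta}\|a\|_{L^{\infty}(Q_{r,*})}$ summand; this is harmless since $\|a\|_{L^{\infty}(Q_{r,*})}$ has already been controlled in Theorem \ref{Theorem1}. The only point needing mild care is the uniform-in-$\omega$ Lipschitz bound on $G$, which follows immediately from the $\mathcal{C}^{1}$ bound on $b$.
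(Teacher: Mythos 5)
Your proposal is correct and follows essentially the same route as the paper: test Lemma \ref{Lemma3.5} against the concentrating family $\varphi_{h}$ from (\ref{EQ3.26}), split off the Lipschitz error $Ch$ coming from the $\mathcal{C}^{1}$ bound on $b$, balance $h=\lambda^{-1/7}$ against $\lambda^{-1}h^{-6}$ exactly as in Lemma \ref{Lemma3.2}, and extend to a.e.\ $y\in\R^{n}$ by the same support argument showing $\mathcal{R}(b)(y,\omega)=0$ for $y\notin\mathscr{A}_{r}$. The only (harmless) difference is that you make the exponents $\delta=27/7$, $\beta=1/7$ explicit where the paper leaves them implicit.
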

\begin{proof}{}
We proceed as in the proof of Lemma \ref{Lemma3.2}. We  consider  the
sequence  $(\varphi_{h})_{h}$ defined by (\ref{EQ3.26}) with
$y\in\mathcal{A}_{r}$. Since we have
$$\begin{array}{lll}
\Big|\displaystyle\int_{0}^{T}b(y-t\omega,t)\!\!\!&dt&\!\!\!\Big|=\Big|\displaystyle\int_{0}^{T}\int_{\R^{n}}b(y-t\omega,t)\,\varphi_{h}^{2}(x)\,dx\,dt
\Big|\\
&\leq& \Big| \displaystyle\int_{0}^{T}\!\!\!\int_{\R^{n}}\!\!b(x-t\omega,t)\varphi_{h}^{2}(x)dx\,dt\Big|
+\Big| \displaystyle\int_{0}^{T}\!\!\!\int_{\R^{n}}\!\!\!\Big(b(y-t\omega,t)-b(x-t\omega,t)\Big)\varphi_{h}^{2}(x)dx\,dt \Big|.
\end{array}$$
Then, by applying Lemma \ref{Lemma3.5} with $\varphi=\varphi_{h}$, and since
$|b(y-t\omega,t)-b(x-t\omega,t)|\leq C |y-x|$, we obtain
$$\begin{array}{lll}
\Big|\displaystyle\int_{0}^{T}b(y-t\omega,t)dt\Big| \leq\!
C\Big(\lambda^{3}\|\Lambda_{a_{2},b_{2}}-\Lambda_{a_{1},b_{1}}\|+\lambda\|a\|_{L^{\infty}(Q_{r,*})}+\displaystyle\frac{1}{\lambda}
\Big)
\|\varphi_{h}\|^{2}_{H^{3}(\R^{n})}+C\!\!\displaystyle\int_{\R^{n}}\!\!|x-y|\varphi_{h}^{2}(x)dx.
\end{array} $$
On the other hand, since $\|\varphi_{h}\|_{H^{3}(\R^{n})}\leq C h^{-3}$
 and  $ \displaystyle\int_{\R^{n}} |x-y|
\varphi_{h}^{2}(x)\,dx\leq \,C \,h$, we conclude that
$$
\Big|\int_{0}^{T}b(y-t\omega,t)\,dt\Big|\leq C \Big( \lambda^{3}\|\Lambda_{a_{2},b_{2}}-\Lambda_{a_{1},b_{1}}\|
+\lambda\|a\|_{L^{\infty}(Q_{r,*})} +\frac{1}{\lambda}\Big)h^{-6}+C\,h.
$$ Selecting $h$ small such that  $h=h^{-6}/\lambda$. Then, we find two
constants $\delta>0$ and $\beta>0$ such that
$$
\Big|\int_{0}^{T}b(y-t\omega,t) \Big|\leq C
\Big(\lambda^{\delta}\|\Lambda_{a_{2},b_{2}}-\Lambda_{a_{1},b_{1}}\|+\lambda^{\delta}\|a\|_{L^{\infty}(Q_{r,*})}
+\frac{1}{\lambda^{\beta}}    \Big).
$$
Using the fact that $b=b_{2}-b_{1}=0$
outside $Q_{r,*}$, we then conclude that for all
 $y\in\mathcal{A}_{r}$ and $\omega\in\mathbb{S}^{n-1}$,
 $$
 \Big| \int_{\R}b(y-t\omega,t)\,dt \Big|\leq  C
 \Big(\lambda^{\delta}\|\Lambda_{a_{2},b_{2}}-\Lambda_{a_{1},b_{1}}\|+\lambda^{\delta}\|a\|_{L^{\infty}(Q_{r,*})}
+\frac{1}{\lambda^{\beta}}    \Big).
$$
Next, by arguing  as in the
derivation of Lemma \ref{Lemma3.2}, we end up upper bounding the light-ray
transform of $b$,  for all $y\in\R^{n}$.
\end{proof}
At this point, it is convenient to recall that our goal is to obtain an
estimate for the Fourier transform of $b$ in a precise set. So, by proceeding
by a similar way as in the previous section, we get this result.
\begin{Lemm}\label{Lemma3.7}
Let $(a_{i},b_{i})\in\mathcal{A}(M_{1},M_{2})$, $i=1,\,2$. There exists
$C>0$, $\delta>0$, $\beta>0$ and $\lambda_{0}>0$, such that the following
estimate
$$|\widehat{b}(\xi,\tau)|\leq C \Big(\lambda^{\delta}\|\Lambda_{a_{2},b_{2}}-\Lambda_{a_{1},b_{1}}\|+\lambda^{\delta}\|a\|_{L^{\infty}(Q_{r,*})}
+\frac{1}{\lambda^{\beta}}\Big),\,\,\,\,\,\,\,\mbox{a.\,e}, (\xi,\tau)\in
E,$$ for any $\lambda>\lambda_{0}$. Here $C$ depends only on $\Omega$, $T$,
$M_{1}$ and $M_{2}$.
\end{Lemm}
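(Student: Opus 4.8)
The plan is to follow the proof of Lemma \ref{Lemma3.3} essentially line by line, substituting the light-ray estimate for the potential (Lemma \ref{Lemma3.6}) for the one used there for the absorbing coefficient (Lemma \ref{Lemma3.2}). The extra term $\lambda^{\delta}\|a\|_{L^{\infty}(Q_{r,*})}$ carried by Lemma \ref{Lemma3.6} is simply propagated through the computation, so no structurally new difficulty should arise.

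First I would fix $(\xi,\tau)\in E$, so that $|\tau|<|\xi|$, and choose $\zeta\in\mathbb{S}^{n-1}$ with $\xi\cdot\zeta=0$. Setting
$$\omega=\frac{\tau}{|\xi|^{2}}\,\xi+\sqrt{1-\frac{\tau^{2}}{|\xi|^{2}}}\,\zeta,$$
one checks as in Lemma \ref{Lemma3.3} that $\omega\in\mathbb{S}^{n-1}$ and $\omega\cdot\xi=\tau$. The condition $|\tau|<|\xi|$ defining $E$ is precisely what keeps the radicand nonnegative, which is why the conclusion must be confined to the set $E$.

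Next, performing for each fixed $t$ the change of variables $x=y-t\omega$ in the definition of the light-ray transform, I would obtain the identity
$$\int_{\mathbb{R}^{n}}\mathcal{R}(b)(y,\omega)\,e^{-iy\cdot\xi}\,dy=\int_{\mathbb{R}}\int_{\mathbb{R}^{n}}b(x,t)\,e^{-ix\cdot\xi}e^{-it(\omega\cdot\xi)}\,dx\,dt=\widehat{b}(\xi,\omega\cdot\xi)=\widehat{b}(\xi,\tau).$$
Since $\mbox{Supp}\,b(\cdot,t)\subset\Omega\subset B(0,r/2)$ for every $t$, the $y$-integration is effectively restricted to the fixed ball $B(0,r/2+T)$, which gives
$$\widehat{b}(\xi,\tau)=\int_{\mathbb{R}^{n}\cap B(0,r/2+T)}\mathcal{R}(b)(y,\omega)\,e^{-iy\cdot\xi}\,dy.$$

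Finally, I would take absolute values, bound $|e^{-iy\cdot\xi}|\le 1$, and insert the pointwise estimate of Lemma \ref{Lemma3.6}; since $B(0,r/2+T)$ has fixed finite measure, integrating the uniform bound in $y$ produces
$$|\widehat{b}(\xi,\tau)|\le C\Big(\lambda^{\delta}\|\Lambda_{a_{2},b_{2}}-\Lambda_{a_{1},b_{1}}\|+\lambda^{\delta}\|a\|_{L^{\infty}(Q_{r,*})}+\frac{1}{\lambda^{\beta}}\Big)$$
for all $(\xi,\tau)\in E$ and $\lambda>\lambda_{0}$, with a possibly enlarged constant $C$ absorbing the volume factor. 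The only point needing a little care is that the admissible direction $\omega$ depends on $(\xi,\tau)$, so the estimate of Lemma \ref{Lemma3.6} must hold uniformly in $\omega\in\mathbb{S}^{n-1}$; this is exactly how that lemma is stated, so there is no genuine obstacle. The argument is otherwise a verbatim transcription of Lemma \ref{Lemma3.3}.
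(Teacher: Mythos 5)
Your proposal is correct and follows exactly the route the paper intends: the paper itself dispatches Lemma \ref{Lemma3.7} with the single remark that one proceeds ``by a similar way as in the previous section,'' i.e.\ by repeating the proof of Lemma \ref{Lemma3.3} with Lemma \ref{Lemma3.6} in place of Lemma \ref{Lemma3.2} and carrying the extra term $\lambda^{\delta}\|a\|_{L^{\infty}(Q_{r,*})}$ through. Your construction of $\omega$, the change of variables identifying $\widehat{b}(\xi,\tau)$ with the partial Fourier transform of $\mathcal{R}(b)(\cdot,\omega)$, and the restriction of the $y$-integration to $B(0,r/2+T)$ all match the paper's argument.
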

Next, using the above estimation as well as the analytic continuation
argument, that is Lemma \ref{Lemma3.4}, we upper bound the Fourier transform
of $b$ in a suitable ball $B(0,\alpha)$  as follows
\begin{equation}\label{EQ3.37} |
\widehat{b}(\xi,\tau)|\leq Ce^{\alpha
(1-\gamma)}\Big(\lambda^{\delta}\|\Lambda_{a_{2},b_{2}}-\Lambda_{a_{1},b_{1}}\|
+\lambda^{\delta}\|a\|_{L^{\infty}(Q_{r,*})}+\frac{1}{\lambda^{\beta}}
\Big)^{\gamma},
\end{equation}
 for some $\gamma\in(0,1)$ and where $\alpha>0$ is
assumed to be sufficiently large.  Then, in order to deduce an estimate
linking the unknown coefficient $b$ to the measurement
$\Lambda_{a_{2},b_{2}}-\Lambda_{a_{1},b_{1}}, $ we control  the
$H^{-1}(\R^{n+1})$ norm of $b$ as follows
$$\begin{array}{lll}
\|b\|^{\frac{2}{\gamma}}_{H^{-1}(\R^{n+1})}\leq C\Big[\alpha^{n+1}\|\widehat{b}\|^{2}_{L^{\infty}(B(0,\alpha))}
+\alpha^{-2}\|b\|^{2}_{L^{2}(\R^{n+1})}   \Big]^{\frac{1}{\gamma}}.
\end{array}$$
So,  by the use of (\ref{EQ3.37}), we obtain the following inequality
\begin{equation}\label{EQ3.38}
\|b\|_{H^{-1}(\R^{n+1})}^{\frac{2}{\gamma}}\leq C \Big[
\alpha^{\frac{n+1}{\gamma}}e^{\frac{2\alpha(1-\gamma)}{\gamma}}\para{\lambda^{2\delta}\epsilon^{2}+\lambda^{2\delta}
\|a\|^{2}_{L^{\infty}(Q_{r,*})}+\lambda^{-2\beta}}+\alpha^{\frac{-2}{\gamma}}
\Big],
\end{equation}
where we have set $\epsilon=\|\Lambda_{a_{2},b_{2}}-\Lambda_{a_{1},b_{1}}\|$. In
light of Theorem \ref{Theorem1}, one gets
$$
\|b\|^{\frac{2}{\gamma}}_{H^{-1}(\R^{n+1})}\leq C \Big[ \alpha^{\frac{n+1}{\gamma}}e^{\frac{2\alpha(1-\gamma)}{\gamma}}\para{\lambda^{2\delta}\epsilon^{2}
+\lambda^{2\delta}\epsilon^{2\mu_{1}\mu_{2}}+\lambda^{2\delta}|\log \,\epsilon|^{-2\mu_{2}}+\lambda^{-2\beta}}+\alpha^{-\frac{2}{\gamma}}
 \Big],
$$
for some $\gamma,\,\mu_{1},\,\mu_{2}\in(0,1)$ and $\delta,\,\beta>0$. Let
$\alpha_{0}>0$ be sufficiently large and we take $\alpha>\alpha_{0}$. Setting
$$
 \lambda=\alpha^{\frac{n+3}{2\gamma\beta}} e^{\frac{\alpha(1-\gamma)}{\gamma\beta}}.
 $$
By $\alpha>\alpha_{0}$, we can assume $\lambda>\lambda_{0}$. Therefore, the
estimate (\ref{EQ3.38}) yields
$$
\|b\|^{\frac{2}{\gamma}}_{H^{-1}(\R^{n+1})}\leq C\Big[ e^{N\alpha}\para{\epsilon^{2}+\epsilon^{2s}+|\log\epsilon|^{-2\mu_{2}}}+\alpha^{-\frac{2}{\gamma}} \Big],
$$
for some $s,\,\mu_{1},\,\mu_{2}\in(0,1)$, and where $N$ is depending on
$n,\,\gamma,\delta$ and $\beta$. Thus, if $\epsilon$ is small, we have
\begin{equation}\label{EQ3.39}
\|b\|^{\frac{2}{\gamma}}_{H^{-1}(\R^{n+1})}\leq C\Big( e^{N\alpha}|\log
\epsilon|^{-2\mu_{2}}+\alpha^{-\frac{2}{\gamma}}  \Big).
\end{equation}
In order to minimize the
right hand side of the above inequality with respect to $\alpha$, we need to
take $\alpha$ sufficiently large. So, we select $\alpha$ as follows
$$\alpha=\frac{1}{N} \log|\log\epsilon|^{\mu_{2}},$$
where we have assumed that  $\epsilon <c\leq1$. Then, the estimate
(\ref{EQ3.39}) yields
$$\|b\|_{H^{-1}(Q_{r,*})}\leq \|b\|_{H^{-1}(\R^{n+1})}\leq C \Big( \log |\log \|\Lambda_{a_{2},b_{2}}-\Lambda_{a_{1},b_{1}}\||^{\mu_{2}}
\Big)^{-1}. $$
This completes the proof of Theorem \ref{Theorem2}.
\section{Determination of coefficients from boundary measurements and final data}\label{Sec4}
In this section, we prove  Theorem \ref{Theorem3} and  \ref{Theorem4}. We
will extend the stability estimates obtained in the first case to a larger
region $Q_{r,\sharp}\supset Q_{r,*}$.  We shall consider the  geometric
optics solutions constructed in Section \ref{Sec2}, associated with a
function $\varphi$ obeying supp $\varphi\cap \Omega=\emptyset.$ Note that
this time, we have more flexibility on the support of the function $\varphi$
and we don't need to assume that supp $\varphi\pm
T\omega\cap\Omega=\emptyset$ anymore. We recall  that the observations in
this case are given by the following  operator
$$\begin{array}{ccc}
\mathscr{R}_{a,b}:\mathcal{H}^{1}_{0}(\Sigma)&\longrightarrow&\mathcal{K}\\
\,\,\,\,\,\,\,\,\,\,\,f&\longmapsto&(\p_{\nu}u,u(\cdot,T),\p_{t}u(\cdot,T)),
\end{array}$$
associated to the problem (\ref{EQ1}) with $(u_{0},u_{1})=(0,0)$. We denote
by
$$
\mathscr{R}^{1}_{a,b}(f)=\p_{\nu}u,\,\,\,\,\,\mathscr{R}^{2}_{a,b}(f)=u(\cdot,T),\,\,\,\,\,\,\mathscr{R}_{a,b}^{3}(f)=\p_{t}u(\cdot,T).
$$
\subsection{Stability for the absorbing coefficient}\label{subsection4.1}
 In this section we will prove that the
absorbing coefficient $a$ can be stably recovered in a larger region if we
further know the final data of the solution $u$ of the dissipative wave
equation (\ref{EQ1}). In the rest of this section, we define $a=a_{2}-a_{1}$
in $\overline{Q}_{r}$ and $a=0$ on $\R^{n+1}\setminus \overline{Q}_{r}$. We
shall first prove the following statement
\begin{Lemm}\label{Lemma4.1}
Let $(a_{i},b_{i})\in\mathcal{A}(M_{1},M_{2})$, $i=1,\,2$. Let
$\varphi\in\mathcal{C}^{\infty}_{0}(\R^{n})$ be such that supp
$\varphi\cap\Omega=\emptyset$. There exists $C>0$, such that for any
$\omega\in\mathbb{S}^{n-1}$, the following estimate holds
$$\Big|\int_{\R^{n}}\varphi^{2}(y)\Big[ \exp\Big( -\frac{1}{2}\int_{0}^{T}a(y-s\omega,s)\,ds \Big)-1\Big] \,dy  \Big|\leq C\Big(\lambda^{2}
\|\mathscr{R}_{a_{2},b_{2}}-\mathscr{R}_{a_{1},b_{1}}\|+\frac{1}{\lambda}
\Big)\|\varphi\|_{H^{3}(\R^{n})}^{2}.$$ Here $C$ depends only on $\Omega$, $T$, $M_{1}$ and $M_{2}$.
\end{Lemm}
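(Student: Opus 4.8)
The plan is to reproduce the proof of Lemma \ref{Lemma3.1} essentially verbatim, the only structural change being that we no longer impose $(\mathrm{supp}\,\varphi \pm T\omega)\cap\Omega=\emptyset$. Consequently the temporal boundary terms at $t=T$ in Green's formula no longer vanish; instead they become \emph{observable} through the two extra components $\mathscr{R}^2$ and $\mathscr{R}^3$ of the response operator, and this is exactly what lets us enlarge the region from $Q_{r,*}$ to $Q_{r,\sharp}$. First I would reconstruct the geometric optics solutions as before: by Lemma \ref{Lemma2.2} produce the forward solution $u^+$ for $(a_2,b_2)$ in the form (\ref{EQ3.19}), set $f_\lambda=u^+{}_{|\Sigma}$, let $u_1$ solve (\ref{EQ1}) for $(a_1,b_1)$ with the same datum $f_\lambda$ and zero initial data, and put $u=u_1-u^+$, so that $u$ solves (\ref{EQ3.20}) with source $a\,\p_t u^+ + b\,u^+$, zero initial data and $u_{|\Sigma}=0$. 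By Lemma \ref{Lemma2.3}, take the backward solution $u^-$ of the adjoint equation for $a_1$ and $(b_1-\p_t a_1)$ in the form (\ref{EQ3.21}), with $r^-_\lambda(\cdot,T)=\p_t r^-_\lambda(\cdot,T)=0$ by (\ref{EQ2.17}).

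Next I would multiply the first equation of (\ref{EQ3.20}) by $u^-$ and integrate by parts over $Q$. Since $u^-$ solves $\p_t^2u^--\Delta u^- -a_1\p_t u^- +(b_1-\p_t a_1)u^-=0$, the interior terms cancel; the $t=0$ contribution vanishes because $u(\cdot,0)=\p_t u(\cdot,0)=0$; and, using $u_{|\Sigma}=0$, the lateral contribution reduces to $-\int_\Sigma\p_\nu u\,u^-\,d\sigma\,dt=\int_\Sigma(\mathscr{R}^1_{a_2,b_2}-\mathscr{R}^1_{a_1,b_1})(f_\lambda)\,u^-\,d\sigma\,dt$, exactly as in Section \ref{Sec3}. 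The genuinely new object is the $t=T$ boundary term
$$
\mathcal{J}_\lambda=\int_\Omega\Big(\p_t u(x,T)\,u^-(x,T)-u(x,T)\,\p_t u^-(x,T)+a_1(x,T)\,u(x,T)\,u^-(x,T)\Big)\,dx,
$$
which in Section \ref{Sec3} was annihilated by the extra support hypothesis. The key observation is that $u(\cdot,T)$ and $\p_t u(\cdot,T)$ are now measured: since $u=u_1-u^+$ and $u^+$ is the solution for $(a_2,b_2)$ with datum $f_\lambda$, one has $u(\cdot,T)=-(\mathscr{R}^2_{a_2,b_2}-\mathscr{R}^2_{a_1,b_1})(f_\lambda)$ and $\p_t u(\cdot,T)=-(\mathscr{R}^3_{a_2,b_2}-\mathscr{R}^3_{a_1,b_1})(f_\lambda)$, whence $\|u(\cdot,T)\|_{L^2(\Omega)}+\|\p_t u(\cdot,T)\|_{L^2(\Omega)}\le C\|\mathscr{R}_{a_2,b_2}-\mathscr{R}_{a_1,b_1}\|\,\|f_\lambda\|_{H^1(\Sigma)}$. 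Expanding $u^\pm$ and collecting amplitudes precisely as between (\ref{EQ3.22}) and (\ref{EQ3.23}) then gives the analogue of (\ref{EQ3.23}): $i\lambda\int_0^T\!\!\int_\Omega a\,\varphi^2(x+t\omega)A\,dx\,dt$ equals the lateral pairing, minus $\int_0^T\!\!\int_\Omega b\,u^+u^-$, minus the remainder $\mathcal{I}_\lambda$ of (\ref{EQ3.24}), plus $\mathcal{J}_\lambda$.

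It remains to estimate the right-hand side and divide by $\lambda$. As in (\ref{EQ3.25}) the lateral term is $O\big(\lambda^3\|\mathscr{R}_{a_2,b_2}-\mathscr{R}_{a_1,b_1}\|\,\|\varphi\|^2_{H^3(\R^n)}\big)$, while $\int b\,u^+u^-$ and $\mathcal{I}_\lambda$ are $O\big(\|\varphi\|^2_{H^3(\R^n)}\big)$ by (\ref{EQ3.24}). For $\mathcal{J}_\lambda$ the crucial simplification is that $r^-_\lambda(\cdot,T)=\p_t r^-_\lambda(\cdot,T)=0$, so $u^-(\cdot,T)$ and $\p_t u^-(\cdot,T)$ reduce to the principal amplitude and its time derivative at $t=T$, giving $\|u^-(\cdot,T)\|_{L^2(\Omega)}\le C\|\varphi\|_{H^3(\R^n)}$ and $\|\p_t u^-(\cdot,T)\|_{L^2(\Omega)}\le C\lambda\|\varphi\|_{H^3(\R^n)}$. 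Combined with the two $L^2$ bounds above and the trace estimate $\|f_\lambda\|_{H^1(\Sigma)}\le C\lambda^2\|\varphi\|_{H^3(\R^n)}$ used in (\ref{EQ3.25}), this yields $|\mathcal{J}_\lambda|\le C\lambda^3\|\mathscr{R}_{a_2,b_2}-\mathscr{R}_{a_1,b_1}\|\,\|\varphi\|^2_{H^3(\R^n)}$, the dominant piece being $\int_\Omega u(\cdot,T)\,\p_t u^-(\cdot,T)\,dx$. Dividing by $\lambda$ gives $\big|\int_0^T\!\!\int_\Omega a\,\varphi^2(x+t\omega)A\,dx\,dt\big|\le C(\lambda^2\|\mathscr{R}_{a_2,b_2}-\mathscr{R}_{a_1,b_1}\|+\lambda^{-1})\|\varphi\|^2_{H^3(\R^n)}$, and the change of variables $y=x+t\omega$ together with the identity $a(y-t\omega,t)A=-2\,\tfrac{d}{dt}\exp\!\big(-\tfrac12\int_0^t a(y-s\omega,s)\,ds\big)$ and integration in $t$ converts this into the claimed estimate, exactly as at the close of Lemma \ref{Lemma3.1}.

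The hard part will be the bookkeeping of $\mathcal{J}_\lambda$: one must check that \emph{all three} pieces, including the lower-order contribution $\int_\Omega a_1(\cdot,T)\,u(\cdot,T)\,u^-(\cdot,T)\,dx$ coming from integrating $a_1\p_t u$ by parts, are genuinely controlled by the single norm $\|\mathscr{R}_{a_2,b_2}-\mathscr{R}_{a_1,b_1}\|$, and that the worst power of $\lambda$ they produce does not exceed the $\lambda^2$ already dictated by the lateral term after division by $\lambda$. Once the vanishing of the final-time remainders from (\ref{EQ2.17}) is exploited, everything else is a verbatim repetition of the argument for Lemma \ref{Lemma3.1}.
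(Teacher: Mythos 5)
Your proposal is correct and follows essentially the same route as the paper: the paper's identity (\ref{EQ4.43}) is exactly your Green's-formula computation with the $t=T$ boundary term $\mathcal{J}_\lambda$ rewritten via $u(\cdot,T)=-(\mathscr{R}^{2}_{a_{2},b_{2}}-\mathscr{R}^{2}_{a_{1},b_{1}})(f_{\lambda})$ and $\p_{t}u(\cdot,T)=-(\mathscr{R}^{3}_{a_{2},b_{2}}-\mathscr{R}^{3}_{a_{1},b_{1}})(f_{\lambda})$, and the paper then bounds all three measured components at once by a single Cauchy--Schwarz against $\phi_{\lambda}=(u^{-}_{|\Sigma},u^{-}(\cdot,T),\p_{t}u^{-}(\cdot,T))$, which is just a more compact packaging of your term-by-term estimate of $\mathcal{J}_\lambda$. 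The powers of $\lambda$ you track ($\lambda^{3}$ before division, dominated by the $\int_\Omega u(\cdot,T)\,\p_t u^-(\cdot,T)\,dx$ piece) agree with the paper's final bound.
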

\begin{proof}{}
In view of Lemma \ref{Lemma2.1} and using the fact that supp
$\varphi\cap\Omega=\emptyset$, there exists a geometrical optic solution
$u^{+}$ to the wave equation
$$
\left\{
  \begin{array}{ll}
   \Big(\p_{t}^{2}-\Delta+a_{2}(x,t)\p_{t}+b_{2}(x,t)\Big)u^{+}=0  & \mbox{in}\,\,\,Q, \\
\\
   u^{+}(x,0)=\p_{t}u^{+}(x,0)=0  & \mbox{in}\,\,\,\Omega,
  \end{array}
\right.
$$
in the following form
\begin{equation}\label{EQ4.40}
u^{+}(x,t)=\varphi(x+t\omega)A^{+}(x,t)e^{i\lambda(x\cdot\omega+t)}+r_{\lambda}^{+}(x,t),
\end{equation}
corresponding to the coefficients $a_{2}$ and $b_{2}$,  where
$r_{\lambda}^{+}(x,t)$ satisfies (\ref{EQ2.9}) and (\ref{EQ2.10}). We denote
$$
f_{\lambda}(x,t)= u^{+}(x,t)_{|\Sigma}=\varphi(x+t\omega) A^{+}(x,t)e^{i\lambda(x\cdot\omega+t)}.
$$
Let $u_{1}$ be the solution of
$$\left\{
  \begin{array}{ll}
    \p_{t}^{2}u_{1}-\Delta u_{1}+a_{1}(x,t)\p_{t}u_{1}+b_{1}(x,t)u_{1}=0 & \mbox{in}\,\,\,Q, \\
\\
    u_{1}(x,0)=\p_{t}u_{1}(x,0)=0 & \mbox{in}\,\,\,\Omega, \\
\\
    u_{1}=f_{\lambda} & \mbox{on}\,\,\,\Sigma.
  \end{array}
\right.
$$
Putting $u=u_{1}-u^{+}$. Then, $u$ is a solution to the following system
\begin{equation}\label{EQ4.41}
\left\{
  \begin{array}{ll}
    \p_{t}^{2}u-\Delta u +a_{1}(x,t) \p_{t}u+b_{1}(x,t)u=a(x,t)\p_{t}u^{+}+b(x,t)u^{+} & \mbox{in}\,\,\,Q, \\
\\
    u(x,0)=\p_{t}u(x,0)=0 & \mbox{in}\,\,\,\Omega, \\
\\
    u(x,t)=0 & \mbox{on}\,\,\,\Sigma,
  \end{array}
\right.
\end{equation}
 where $a=a_{2}-a_{1}$ and $b=b_{2}-b_{1}$. On the other hand, Lemma \ref{Lemma2.3} guarantees the existence of a
geometrical optic solution $u^{-}$ to the adjoint problem
$$
\p_{t}^{2}u^{-}-\Delta u^{-} -a_{1}(x,t)\p_{t}u^{-}+(b_{1}(x,t)-\p_{t}a_{1}(x,t)) u^{-} =0 \,\,\,\,\,\,\,\, \mbox{in}\,\,\,Q,
$$
corresponding to the coefficients $a_{1}$ and $(-\p_{t} a_{1}+b_{1})$, in the
form
\begin{equation}\label{EQ4.42}
u^{-}(x,t)=\varphi(x+t\omega)e^{-i\lambda(x\cdot\omega+t)}A^{-}(x,t)+r_{\lambda}^{-}(x,t),
\end{equation}
 where $r_{\lambda}^{-}(x,t)$
satisfies (\ref{EQ2.17}) and (\ref{EQ2.18}). Multiplying the first equation
of (\ref{EQ4.41}) by $u^{-}$, integrating by parts and using Green's formula,
we get
\begin{eqnarray}\label{EQ4.43}
\displaystyle\int_{0}^{T}\!\!\!\int_{\Omega}a(x,t)\p_{t}u^{+}u^{-}dx dt\!\!\!\!&=&\!\!\!\!\!
\displaystyle\int_{0}^{T}\!\!\!\int_{\Gamma}(\mathscr{R}^{1}_{a_{2},b_{2}}-\mathscr{R}^{1}_{a_{1},b_{1}})(f_{\lambda})u^{-}(x,t)d\sigma dt
\!-\!\displaystyle\int_{\Omega}\!(\mathscr{R}^{3}_{a_{2},b_{2}}-\mathscr{R}^{3}_{a_{1},b_{1}})(f_{\lambda})u^{-}(x,T)\,dx\cr
&&\,\,\,\,\,-\displaystyle\int_{\Omega}(\mathscr{R}^{2}_{a_{2},b_{2}}-\mathscr{R}^{2}_{a_{1},b_{1}})(f_{\lambda})
\Big[a_{1}(x,T)u^{-}(x,T)-\p_{t}u^{-}(x,T)\Big]
\,dx\cr
&&\,\,\,\,\,\,\,\,\,\,\,\,\,\,\,\,\,\,\,\,-\displaystyle\int_{0}^{T}\!\!\!\int_{\Omega}b(x,t)u^{+}(x,t)u^{-}(x,t)\,dx\,dt.
\end{eqnarray}
By replacing $u^{+}$ and $u^{-}$ by their expressions,  using (\ref{EQ3.24})
and the Cauchy-Schwartz inequality, we obtain
$$\begin{array}{lll}
&&\!\!\!\!\!\!\!\!\!\!\!\!\Big|\displaystyle\int_{0}^{T}\!\!\!\int_{\Omega}a(x,t)\varphi^{2}(x+t\omega)A(x,t)\,dx\,dt
\Big|\leq\displaystyle\frac{C}{\lambda}
\Big[\Big(\|u^{-}\|^{2}_{L^{2}(\Sigma)}+\|u^{-}(\cdot,T)\|^{2}_{L^{2}(\Omega)}+\|\p_{t}u^{-}(\cdot,T)\|^{2}_{L^{2}(\Omega)}
\Big)^{\frac{1}{2}}\\
&&\!\!\!\!\!\!\!\Big(
\|(\mathscr{R}^{1}_{a_{2},b_{2}}-\mathscr{R}^{1}_{a_{1},b_{1}})(f_{\lambda})\|^{2}_{L^{2}(\Sigma)}
\!\!+\!\!\|(\mathscr{R}^{2}_{a_{2},b_{2}}-\mathscr{R}^{2}_{a_{1},b_{1}})(f_{\lambda})\|^{2}_{H^{1}(\Omega)}
\!\!+\!\!\|(\mathscr{R}^{3}_{a_{2},b_{2}}-\mathscr{R}^{3}_{a_{1},b_{1}})(f_{\lambda})\|_{L^{2}(\Omega)}^{2}\Big)^{\frac{1}{2}}\!\!+\!\!
\|\varphi\|^{2}_{H^{3}(\R^{n})}\Big].\\
\end{array}$$
Then, by setting
$\phi_{\lambda}=\displaystyle\para{u^{-}_{|\Sigma},\,u^{-}(\cdot,T),\,\p_{t}u^{-}(\cdot,T)}$,
one can see that
$$
\Big| \displaystyle\int_{0}^{T}\!\!\!\int_{\Omega}a(x,t) \varphi^{2}(x+t\omega)
A(x,t)\,dx\,dt\Big|\leq\frac{C}{\lambda}\Big(
\|\mathscr{R}_{a_{2},b_{2}}-\mathscr{R}_{a_{1},b_{1}}\|
\|f_{\lambda}\|_{H^{1}(\Sigma)}\|\phi_{\lambda}\|_{\mathcal{K}}+\|\varphi\|^{2}_{H^{3}(\R^{n})}\Big).
$$
Therefore, by the trace theorem we get
$$\Big| \int_{0}^{T}\!\!\!\int_{\Omega}a(x,t) \varphi^{2}(x+t\omega)
A(x,t)\,dx\,dt\Big|\leq C\Big(\lambda^{2}
\|\mathscr{R}_{a_{2},b_{2}}-\mathscr{R}_{a_{1},b_{1}}\|
+\frac{1}{\lambda}\Big)\|\varphi\|^{2}_{H^{3}(\R^{n})}.$$ Finally, we use the
fact that $a=a_{2}-a_{1}=0$ outside $Q_{r,\sharp}$ and we complete the proof
of the lemma by arguing as in the proof of Lemma \ref{Lemma3.1}.
\end{proof}
 Next, by considering the sequence $\varphi_{h}$ defined by (\ref{EQ3.26}) with $y\notin\Omega$,
 taking to account that  $a=a_{2}-a_{1}=0$ outside $Q_{r,\sharp}$
and  arguing as in Section \ref{subsection3.1}, we complete the proof of
Theorem \ref{Theorem3}.
\subsection{Stability for the  potential}\label{subsection4.2}
We are now in position to prove Theorem \ref{Theorem4}. We aim to show by the
use of Theorem\ref{Theorem3}, that the potential $b$ can be stably recovered
in the region $Q_{r,\sharp}$, with respect to the operator
$\mathscr{R}_{a,b}$ . In the rest of this section, we define $b$  in
$\R^{n+1}$ by $b=b_{2}-b_{1}$ in $\overline{Q}_{r}$ and $b=0$ on
$\R^{n+1}\setminus \overline{Q}_{r}$.
\begin{Lemm}\label{Lemma4.2}
Let $(a_{i},b_{i})\in \mathcal{A}(M_{1},M_{2})$, $i=1,\,2$. There exists
$C>0$ such that for any $\omega\in\mathbb{S}^{n-1}$ and $\varphi\in
\mathcal{C}_{0}^{\infty}(\R^{n})$ such that
$\mbox{supp}\,\varphi\cap\Omega=\emptyset$, the following estimate holds
$$
\Big|  \displaystyle\int_{0}^{T}\!\!\!\displaystyle\int_{\R^{n}} b(y-t\omega,t) \varphi^{2}(y)
\,dy\,dt\Big|
\leq
C\Big(\lambda^{3}\|\mathscr{R}_{a_{2},b_{2}}-\mathscr{R}_{a_{1},b_{1}}\|+\lambda\|a\|_{L^{\infty}(Q_{r,\sharp})}+\displaystyle\frac{1}{\lambda}
\Big)
\|\varphi\|^{2}_{H^{3}(\R^{n})},
 $$
where $C$ depends only on $\Omega$, $M_{1}$, $M_{2}$ and $T$.
\end{Lemm}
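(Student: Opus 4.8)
The plan is to repeat the argument of Lemma \ref{Lemma3.5}, but to start from the integral identity (\ref{EQ4.43}) obtained in the proof of Lemma \ref{Lemma4.1} rather than from (\ref{EQ3.22}): since here we only impose $\mbox{supp}\,\varphi\cap\Omega=\emptyset$ and drop the condition $\mbox{supp}\,\varphi\pm T\omega\cap\Omega=\emptyset$, the boundary contributions at $t=T$ no longer vanish and are instead absorbed into the final-data components $\mathscr{R}^{2}_{a,b}$ and $\mathscr{R}^{3}_{a,b}$ of the response operator.

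First I would rearrange (\ref{EQ4.43}) so as to isolate the potential term, writing $\int_{0}^{T}\!\int_{\Omega}b\,u^{+}u^{-}\,dx\,dt$ as the boundary term $\int_{0}^{T}\!\int_{\Gamma}(\mathscr{R}^{1}_{a_{2},b_{2}}-\mathscr{R}^{1}_{a_{1},b_{1}})(f_{\lambda})u^{-}\,d\sigma\,dt$, plus the two final-data integrals over $\Omega$ carrying $\mathscr{R}^{2}$ and $\mathscr{R}^{3}$, minus $\int_{0}^{T}\!\int_{\Omega}a\,\p_{t}u^{+}u^{-}\,dx\,dt$. Inserting the geometric-optics expansions (\ref{EQ4.40}) and (\ref{EQ4.42}) for $u^{+}$ and $u^{-}$, the leading part of the left-hand side is $\int_{0}^{T}\!\int_{\Omega}b\,\varphi^{2}(x+t\omega)A\,dx\,dt$ with $A=A^{+}A^{-}$, while every remaining piece carries at least one factor $r_{\lambda}^{\pm}$ and is therefore bounded by $C\lambda^{-1}\|\varphi\|_{H^{3}(\R^{n})}^{2}$ by virtue of the decay estimates (\ref{EQ2.10}) and (\ref{EQ2.18}), exactly as in (\ref{EQ3.24}).

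Next I would bound the source terms on the right. The boundary and final-data terms are grouped together and estimated by Cauchy--Schwarz in $\mathcal{K}$: with $\phi_{\lambda}=(u^{-}_{|\Sigma},u^{-}(\cdot,T),\p_{t}u^{-}(\cdot,T))$ as in the proof of Lemma \ref{Lemma4.1}, they are dominated by $\|\mathscr{R}_{a_{2},b_{2}}-\mathscr{R}_{a_{1},b_{1}}\|\,\|f_{\lambda}\|_{H^{1}(\Sigma)}\|\phi_{\lambda}\|_{\mathcal{K}}$. Bounding the trace loosely through the interior, $\|f_{\lambda}\|_{H^{1}(\Sigma)}\le C\|u^{+}-r_{\lambda}^{+}\|_{H^{2}(Q)}\le C\lambda^{2}\|\varphi\|_{H^{3}(\R^{n})}$ and $\|\phi_{\lambda}\|_{\mathcal{K}}\le C\lambda\|\varphi\|_{H^{3}(\R^{n})}$, which produces the factor $\lambda^{3}\|\mathscr{R}_{a_{2},b_{2}}-\mathscr{R}_{a_{1},b_{1}}\|$. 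For the last term, the only piece surviving at leading order is $i\lambda\int_{0}^{T}\!\int_{\Omega}a\,\varphi^{2}(x+t\omega)A\,dx\,dt$, bounded by $C\lambda\|a\|_{L^{\infty}(Q_{r,\sharp})}\|\varphi\|_{H^{3}(\R^{n})}^{2}$, the remaining $r_{\lambda}^{\pm}$-corrections being again of order $\lambda^{-1}$.

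Finally, I would replace $A$ by $1$ in the main term: since $|1-A|\le C\|a\|_{L^{\infty}(Q_{r,\sharp})}$, the resulting error is $\le C\|a\|_{L^{\infty}(Q_{r,\sharp})}\|\varphi\|_{H^{3}(\R^{n})}^{2}$ (compare (\ref{EQ3.36})) and is absorbed into the $\lambda\|a\|_{L^{\infty}(Q_{r,\sharp})}$ term. Using that $b=b_{2}-b_{1}=0$ outside $Q_{r,\sharp}$ and performing the change of variables $y=x+t\omega$ then yields the claimed estimate. The delicate point is not any single bound but the careful bookkeeping of the powers of $\lambda$ across the (deliberately loose) trace theorem and the energy estimate for the final-data block $\phi_{\lambda}$, which is precisely what feeds the $\lambda^{3}$ in front of $\|\mathscr{R}_{a_{2},b_{2}}-\mathscr{R}_{a_{1},b_{1}}\|$; this is the genuinely new feature compared with Lemma \ref{Lemma3.5}, where the $t=T$ terms were simply dropped under the stronger support hypothesis on $\varphi$.
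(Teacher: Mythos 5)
Your proposal is correct and follows essentially the same route as the paper: starting from the identity (\ref{EQ4.43}), isolating the potential term, substituting the geometric optics expansions (\ref{EQ4.40}) and (\ref{EQ4.42}), invoking the bounds (\ref{EQ3.34}) and (\ref{EQ3.36}) together with the Cauchy--Schwarz estimate in $\mathcal{K}$ for the boundary and final-data blocks, and concluding with $b=0$ outside $Q_{r,\sharp}$ and the change of variables $y=x+t\omega$. Your write-up in fact supplies the $\lambda$-bookkeeping that the paper leaves implicit, but the argument is the same.
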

\begin{proof}{}
We start with the identity (\ref{EQ4.43}), except this time we  isolate the
potential $b$, we get
$$\begin{array}{lll}
\displaystyle\int_{0}^{T}\!\!\!\int_{\Omega}b(x,t)u^{+}u^{-}dx
dt\!\!\!\!&=&\!\!\!\!\!
\displaystyle\int_{0}^{T}\!\!\!\int_{\Gamma}(\mathscr{R}^{1}_{a_{2},b_{2}}-\mathscr{R}^{1}_{a_{1},b_{1}})(f_{\lambda})u^{-}(x,t)d\sigma
dt
\!-\!\displaystyle\int_{\Omega}\!(\mathscr{R}^{3}_{a_{2},b_{2}}-\mathscr{R}^{3}_{a_{1},b_{1}})(f_{\lambda})u^{-}(x,T)\,dx\cr
&&\,\,\,\,\,-\displaystyle\int_{\Omega}(\mathscr{R}^{2}_{a_{2},b_{2}}-\mathscr{R}^{2}_{a_{1},b_{1}})(f_{\lambda})\Big[a_{1}(x,T)u^{-}(x,T)
-\p_{t}u^{-}(x,T)\Big]
\,dx\cr
&&\,\,\,\,\,\,\,\,\,\,\,\,\,\,\,\,\,\,\,\,-\displaystyle\int_{0}^{T}\!\!\!\int_{\Omega}a(x,t)\p_{t}u^{+}(x,t)u^{-}(x,t)\,dx\,dt.
\end{array}$$
So, by replacing $u^{+}$ and $u^{-}$ by their expressions,  taking to account
(\ref{EQ3.34}), (\ref{EQ3.36}) and the fact that $a=a_{2}-a_{1}=0$ outside
$Q_{r,\sharp}$, and making the change of variables $y=x+t\omega$, we obtain
$$
\Big| \int_{0}^{T}\!\!\!\int_{\R^{n}}b(y-t\omega,t)\varphi^{2}(y)\,dy\,dt
 \Big|\leq C \Big( \lambda^{3}\|\mathscr{R}_{a_{2},b_{2}}-\mathscr{R}_{a_{1},b_{1}}\|
+\lambda\|a\|_{L^{\infty}(Q_{r,\sharp})}+\frac{1}{\lambda}
\Big)\|\varphi\|_{H^{3}(\R^{n})}^{2}.
$$
This completes the proof of the lemma.
\end{proof}
In order to complete the proof of Theorem \ref{Theorem4}, it will be enough
to consider the sequence $(\varphi_{h})$ defined by (\ref{EQ3.26}), with,\,
$y\notin\Omega$, use the fact $b=b_{2}-b_{1}=0$ outside $Q_{r,\sharp}$ and
repeat the same arguments of Section \ref{subsection3.2}
\section{Determination of coefficients from boundary measurements and final data by varying the initial data}\label{Sec5}
In the present section, we deal with the same inverse problem, except the set
of data, in this case, is made of the responses of the medium for all
possible initial data. For
$(a_{i},b_{i})\in\mathcal{C}^{2}(\overline{Q})\times\mathcal{C}^{1}(\overline{Q})$,
$i=1,\,2$, we define $(a,b)=(a_{2}-a_{1},b_{2}-b_{1})$ in $Q$ and
$(a,b)=(0,0)$ on $\R^{n+1}\setminus Q$. By proceeding  as in
the derivation of Theorem \ref{Theorem1} and Theorem \ref{Theorem3}, we prove
a $\log$-type stability estimate in the determination of the absorbing
coefficient  $a$ over the whole domain $Q$, from the knowledge of the
measurement $\mathcal{I}_{a,b}$.

To prove such estimate, we proceed as in Section \ref{subsection3.1} and
\ref{subsection4.1}, except  this time, we have more flexibility on the
support of the function $\varphi_{h}$ defined by (\ref{EQ3.26}). Namely, we don't
need to impose any condition on its support anymore (we fix $y\in\R^{n}$).

The same thing for the determination of the  time-dependent potential $b$. we
argue as in Section \ref{subsection3.2} and \ref{subsection4.2} to prove  a
$\log$-$\log$-type stability estimate in recovering the time dependent
coefficient $b$ with respect to the operator $\mathcal{I}_{a,b}$, over the
whole domain $Q$.
\appendix
\section{Proof of Lemma \ref{Lemma3.4}}\label{Appendix A}
In this section, we give the proof of Lemma \ref{Lemma3.4} for analytic
continuation. The proof is inspired from estimates given in
\cite{[R1]}[Theorem 3] for one variable analytic function. We simplify an
adapted proof for our case. In order to express the main goal of this section
we first prove the following Lemma
\begin{Lemm}\label{LemmaA.1}
Let $J$ be an open interval in $[-\frac{1}{5},\frac{1}{5}]$, and $g$ be an
holomorphic function in the unit disc $D(0,1)\subset \C$ satisfying
\begin{equation}\label{A.44}
|g(z)|\leq 1,\,\,\,\,\,|z|<1.
\end{equation}
Then, there exist $\gamma\in(0,1)$ and $N>0$ such that the following estimate
holds
$$
\|g\|_{L^{\infty}(B(0,\frac{1}{2}))}\leq N\|g\|_{L^{\infty}(J)}^{\gamma},
$$
 where  $N$ and $\gamma$ are depending only on
$|J|$.
\end{Lemm}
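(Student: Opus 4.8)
The plan is to derive the estimate from the two-constants principle for subharmonic functions, using the harmonic measure of the interval $J$ in the slit disc. Since $\|g\|_{L^{\infty}(J)}\le\|g\|_{L^{\infty}(D(0,1))}\le1$, I set $m:=\|g\|_{L^{\infty}(J)}\in[0,1]$. If $m=0$ then $g$ vanishes on $J$, which has an accumulation point inside $D(0,1)$, so by the identity theorem $g\equiv0$ and the inequality is trivial; hence I may assume $m\in(0,1]$. First I would pass to the subharmonic function $u:=\log|g|$, which satisfies $u\le0$ throughout $D(0,1)$ (because $|g|\le1$) and $u\le\log m$ on $J$. The value $-\infty$ that $u$ takes at zeros of $g$ only helps in what follows.

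Next I introduce the harmonic measure of the slit. Let $\Omega:=D(0,1)\setminus J$ and let $\omega$ be the bounded harmonic function on $\Omega$ with boundary value $1$ on both sides of $J$ and $0$ on $\partial D(0,1)$, i.e. $\omega=\omega(\cdot,J,\Omega)$; by the strong maximum principle $0<\omega<1$ in $\Omega$, the two endpoints of $J$ being removable polar sets. The comparison step is to consider $v:=u-(\log m)\,\omega$ on $\Omega$, which is subharmonic there as the difference of a subharmonic and a harmonic function. On $\partial D(0,1)$ one has $u\le0$ and $\omega=0$, so $v\le0$; on $J$ one has $u\le\log m$ and $\omega=1$, so $v=u-\log m\le0$. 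The maximum principle then yields $v\le0$ on $\Omega$, that is $\log|g(z)|\le(\log m)\,\omega(z)$ for every $z$, extending trivially to the points of $J$.

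To conclude, set $\gamma:=\min_{z\in\overline{B(0,1/2)}}\omega(z)$. Because $\overline{B(0,1/2)}$ is a compact subset of $D(0,1)$ on which $\omega$ is continuous, strictly positive off $J$ and equal to $1$ on $J$, we have $\gamma\in(0,1)$. Since $\log m\le0$ and $\omega(z)\ge\gamma$ on $\overline{B(0,1/2)}$, one gets $(\log m)\,\omega(z)\le\gamma\log m$, whence $\log|g(z)|\le\gamma\log m=\log(m^{\gamma})$, i.e. $|g(z)|\le m^{\gamma}$ on $\overline{B(0,1/2)}$. Taking the supremum gives $\|g\|_{L^{\infty}(B(0,1/2))}\le\|g\|_{L^{\infty}(J)}^{\gamma}$, so the claim holds with $N=1$ (indeed any $N\ge1$ works).

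The step I expect to be the main obstacle is the control of the harmonic measure: establishing the strict lower bound $\gamma>0$ on $\overline{B(0,1/2)}$ and, in particular, arranging that $\gamma$ depends only on $|J|$ and not on the exact position of $J$. For this I would use that $\omega(\cdot,J,\Omega)$ is monotone increasing in $J$, so for a fixed length $\ell=|J|$ the quantity $\min_{\overline{B(0,1/2)}}\omega$ is a continuous function of the interval's center ranging over a compact set of admissible positions inside $[-1/5,1/5]$, hence attains a positive minimum $\gamma(\ell)>0$ depending only on $\ell$; alternatively one may evaluate $\omega$ explicitly via the conformal map of the slit disc onto a half-plane or annulus. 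The only remaining technical points, both standard in potential theory, are the removability of the two endpoints of $J$ and the harmlessness of the value $-\infty$ of $u=\log|g|$ at the zeros of $g$.
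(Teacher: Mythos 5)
Your proof is correct, but it follows a genuinely different route from the paper. You invoke the two-constants theorem: writing $u=\log|g|$, comparing $u$ with $(\log m)\,\omega$ where $\omega$ is the harmonic measure of the slit $J$ in $D(0,1)\setminus\overline{J}$, and taking $\gamma=\min_{\overline{B(0,1/2)}}\omega$; this yields the estimate with the sharper constant $N=1$. The paper instead argues by Lagrange--Hermite interpolation, following Apraiz and Escauriaza: it picks $n+1$ points in $\overline{J}$ with mutual spacing at least $|J|/(n+1)$, bounds the interpolation polynomial by $e\,(6/|J|)^{n}\|g\|_{L^{\infty}(J)}$ on $|z|\le 1/2$, bounds the remainder $|g-P_{n}|$ by $2(7/8)^{n}$ via the residue theorem, and optimizes over $n$. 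The trade-offs are worth noting. The interpolation proof is entirely elementary complex analysis, produces explicit $N$ and $\gamma$, and --- importantly --- its constants depend on $J$ only through the spacing $|J|/(n+1)$, so the independence from the position of $J$ inside $[-\frac15,\frac15]$ is automatic. Your argument is shorter and conceptually cleaner, but it leans on potential-theoretic machinery (the extended maximum principle with a polar exceptional set at the endpoints, regularity of the slit's boundary points so that $\omega\to1$ there), and the uniformity of $\gamma$ in the position of $J$ is not automatic: you correctly flag this as the main remaining obstacle and your proposed fixes (compactness of the set of admissible centers combined with continuity of harmonic measure under domain perturbation, or an explicit conformal map of the slit disc, or alternatively Carleman's domain-monotonicity applied to a translated subdisc $D(c,3/4)\supset\overline{B(0,1/2)}$) do close it, though each requires a further standard but nontrivial fact. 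Both proofs are valid; the paper's buys explicitness and self-containedness, yours buys brevity and the constant $N=1$.
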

\begin{proof}{}
We should first notice that for all  $n\geq1$, there exist $(n+1)$ points
such that $$-\frac{1}{5}\leq x_{0}<...<x_{n}\leq \frac{1}{5},$$
 with $x_{i}\in \overline{J}$, $i=0,..,n$, and satisfying the following estimation
\begin{equation}\label{A.45}
x_{i}-x_{i-1}\geq \frac{|J|}{n+1},\,\,\,\,\mbox{for}\,\,i=1,...,n.
\end{equation}
Let $z\in \mathbb{C}$. We denote by
$$
P_{n}(z)=\sum_{i=0}^{n}g(x_{i}){\displaystyle\prod_{j\neq i}(z-x_{j})}{\displaystyle\prod_{j\neq i}(x_{i}-x_{j})^{-1}}.
$$
In order to prove this lemma, we need first to find an upper bound for
$|P_{n}(z)|$. To do that we first notice that for  $l'>l$ we have
$x_{l'}-x_{l}=\displaystyle\sum_{i=l+1}^{l'} (x_{i}-x_{i-1})$. Hence,
(\ref{A.45}) entails that
$$
\left\{
  \begin{array}{ll}
   (x_{j}-x_{i})\geq (j-i)\,\displaystyle\frac{|J|}{n+1}  & j>i, \\
   (x_{i}-x_{j})\geq (i-j)\, \displaystyle\frac{|J|}{n+1}  & j<i.
  \end{array}
\right.
$$
As a consequence we have the following estimation
\begin{eqnarray}\label{A.46}
\,\,\,\,\,\,\,\,\,
\prod_{j\neq i}|x_{i}-x_{j}|\geq \prod_{j=0}^{i-1}(i-j)
\frac{|J|}{(n+1)}\prod_{j=i+1}^{n}(j-i)\frac{|J|}{(n+1)} \geq i!
\frac{|J|^{i}}{(n+1)^{i}} (n-i)! \frac{|J|^{n-i}}{(n+1)^{n-i}}.
\end{eqnarray}
On the other hand, it is easy to see that for $|z|\leq\frac{1}{2}$ and
$x_{j}\in \overline{J}$, $j=0,..,n$, we have
$$
\displaystyle\prod_{j\neq i}|z-x_{j}|\leq \displaystyle\prod_{j\neq i} \para{|z|+|x_{j}|}\leq 1,
$$
Putting this together with (\ref{A.46}), we end up getting this result
\begin{equation}\label{A.47}
|P_{n}(z)|\leq \sum_{i=0}^{n}C_{n}^{i}\,\frac{(n+1)^{n}}{n!
|E|^{n}}\,\|g\|_{L^{\infty}(J)}\leq e\para{\frac{6}{|J|}}^{n}
\,\|g\|_{L^{\infty}(J)}.
\end{equation}
The next step of the proof is to control $|g(z)-P_{n}(z)|$. For this purpose,
let us introduce the following function: for all $\xi\in \mathbb{C}$, such
that $|\xi|=1$, we denote by
$$G(\xi)={g(\xi)}{(\xi-z)^{-1}\displaystyle\prod_{j=0}^{n}(\xi-x_{j})^{-1}}.$$
Applying the residue Theorem, one obtains the following identity
$$
\frac{1}{2i\pi}\int_{|\xi|=1}G(\xi)\,d\xi=\para{\mbox{Res}(G,z)+\sum_{k=0}^{n}\mbox{Res}(G,x_{k})}=\Big({g(z)-P_{n}(z)}\Big)
\displaystyle\prod_{j=0}^{n}(z-x_{j})^{-1}.
$$
From this and the hypothesis (\ref{A.44}), it follows that for $|z|\leq
\frac{1}{2}$, and $x_{i}\in\overline{J}$,
 we have
 \begin{equation}\label{A.48}
|g(z)-P_{n}(z)|\leq
2\,{\para{\frac{1}{2}+\frac{1}{5}}^{n+1}}\Big(1-\frac{1}{5}\Big)^{-(n+1)}\leq
2
\para{\frac{7}{8}}^{n}.
\end{equation}
Combining   (\ref{A.47}) with (\ref{A.48}), one gets
$$
||g||_{L^{\infty}(B(0,1/2))}\leq 2 \para{\frac{7}{8}}^{n}+e\,\para{\frac{6}{|J|}}^{n}\|g\|_{L^{\infty}(J)},\,\,\,\,\,n\geq
1.
$$
 To complete the proof of the lemma, we need to  minimize the right hand
side of the last estimate with respect to $n$. To this end, let us define the
following function
$$
\psi(x)=2 e^{-x\log(8/7) }+e
\,\|g\|_{L^{\infty}(J)}\,e^{x\,\log(6/|J|)},\,\,\,\,\,\,x\in\R.
$$
A simple calculation show that the function $\psi$ reaches a minimum at this
point
$$
x_{0}=\Big[\log \Big(\frac{48}{7 |J|}\Big)\Big]^{-1}\log\Big[ {\frac{\log (8/7)}{e\,\|g\|_{L^{\infty}(J)}\log (6/|J|) }}\Big].
$$
Then, we end up getting the desired result.
\end{proof}
We move now to establish the second  result by the use of Hadamard's
three-circle theorem and Lemma \ref{LemmaA.1}.
\begin{Lemm}\label{LemmaA.2}
Let $\varphi$ be an analytic function in $[-1,1]$, and I an open interval in
$[-1,1]$. We assume that there exist positive constants $M$ and $\rho$ such
that
\begin{equation}\label{A.49}
|\varphi^{(k)}(s)|\leq \frac{M k!}{ (2\rho)^{k}},\,\,\,\,\,\,\,\,\, k\geq 0,\,\,s\in [-1,1].
\end{equation}
Then, there exist $N=N(\rho,|I|)$ and $\gamma=\gamma(\rho,|I|)$ such that we
have
\begin{equation}\label {A.50}
 |\varphi(s)|\leq N
\|\varphi\|_{L^{\infty}(I)}^{\gamma}M^{1-\gamma},\,\,\,\,\,\mbox{for\,\,any}\,\,\,\,s\in[-1,1].
\end{equation}
\end{Lemm}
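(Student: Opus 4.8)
The plan is to convert the analytic (Gevrey-type) bound (\ref{A.49}) into a genuine holomorphic extension of $\varphi$ to a complex neighbourhood of $[-1,1]$ on which $\varphi$ is bounded by a fixed multiple of $M$, and then to combine one rescaled application of Lemma \ref{LemmaA.1} with a finite chain of Hadamard three-circle inequalities to propagate the smallness of $\varphi$ from $I$ across the whole segment. First I would fix $s_{0}\in[-1,1]$, expand $\varphi$ in its Taylor series at $s_{0}$, and use (\ref{A.49}) to get, for $|z-s_{0}|<2\rho$,
\[
|\varphi(z)|\le \sum_{k\ge 0}\frac{M}{(2\rho)^{k}}|z-s_{0}|^{k}=\frac{M}{1-|z-s_{0}|/(2\rho)}.
\]
Thus $\varphi$ extends holomorphically to $V_{\rho}=\{z\in\C:\ \mathrm{dist}(z,[-1,1])<\rho\}$ with $|\varphi|\le 2M$ there, and $|\varphi|\le 4M$ on $V_{3\rho/2}$. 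Normalising $g=\varphi/(2M)$ gives a holomorphic function with $|g|\le 1$ on $V_{\rho}$, so that Lemma \ref{LemmaA.1} becomes available after a rescaling.

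Next I would produce a first local estimate near $I$. Choosing a subinterval $I'\subset I$ centred at some $s_{1}$ of length $\ell=\min(|I|,2\rho/5)$ and setting $z=s_{1}+\rho w$, the function $\tilde g(w)=g(s_{1}+\rho w)$ is holomorphic and bounded by $1$ on $D(0,1)$, while the image $J$ of $I'$ is an interval contained in $[-\tfrac15,\tfrac15]$. Lemma \ref{LemmaA.1} then yields $\|\tilde g\|_{L^{\infty}(B(0,1/2))}\le N\|\tilde g\|_{L^{\infty}(J)}^{\gamma_{0}}$, which after undoing the scaling reads
\[
\|\varphi\|_{L^{\infty}(D(s_{1},\rho/2))}\le N\,(2M)^{1-\gamma_{0}}\,\|\varphi\|_{L^{\infty}(I)}^{\gamma_{0}},
\]
with $N,\gamma_{0}$ depending only on $|J|$, hence only on $\rho$ and $|I|$.

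Finally I would propagate this H\"older-type smallness along $[-1,1]$. I would cover the segment by discs $D(c_{j},\rho/2)$, $j=0,\dots,K$, with $c_{0}=s_{1}$ and centres $c_{j}\in[-1,1]$ spaced by $\rho/4$, so that a number $K=K(\rho)$ of order $1/\rho$ of them reaches both endpoints. At each step, with circles of radii $\rho/4<\rho/2<3\rho/4$ centred at $c_{j+1}$, the small disc $D(c_{j+1},\rho/4)$ lies in $D(c_{j},\rho/2)$ (already controlled) and the large disc lies in $V_{3\rho/2}$, so Hadamard's three-circle theorem gives
\[
\sup_{D(c_{j+1},\rho/2)}|\varphi|\le\Big(\sup_{D(c_{j+1},\rho/4)}|\varphi|\Big)^{\theta}(4M)^{1-\theta},\qquad \theta=\frac{\log(3/2)}{\log 3}.
\]
Iterating from the estimate at $c_{0}$ keeps the disc radius equal to $\rho/2$ and multiplies the H\"older exponent by $\theta$ at each step, so after $K$ steps every point of $[-1,1]$ lies in some $D(c_{j},\rho/2)$ and
\[
|\varphi(s)|\le N'\,M^{1-\gamma}\,\|\varphi\|_{L^{\infty}(I)}^{\gamma},\qquad \gamma=\gamma_{0}\,\theta^{K}\in(0,1),
\]
which is exactly (\ref{A.50}) with $N',\gamma$ depending only on $\rho$ and $|I|$.

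I expect the main difficulty to be the bookkeeping in this last step: the radii and the spacing of the chain must be arranged so that the three-circle inequality can be iterated \emph{without shrinking} the discs (this is why the extension is pushed out to $V_{3\rho/2}$ and a gap is kept between the radii), and then one must track the compounded exponent $\gamma_{0}\theta^{K}$ to confirm it stays strictly positive and depends only on $\rho$ and $|I|$, the number of links $K$ being itself controlled by $\rho$. The regime of large $\rho$, where $V_{\rho}$ already engulfs $[-1,1]$ and a single application of Lemma \ref{LemmaA.1} suffices, together with the verification that all constants remain uniform, is routine once the chaining is in place.
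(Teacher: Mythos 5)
Your proposal is correct and follows essentially the same route as the paper's proof: extend $\varphi$ holomorphically to a $\rho$-neighbourhood of $[-1,1]$ with bound $O(M)$ via the Taylor estimate (\ref{A.49}), apply Lemma \ref{LemmaA.1} after rescaling a suitable subinterval of $I$ of length comparable to $\min(|I|,\rho)$ to get a first disc estimate, and then propagate along a chain of overlapping discs by iterating Hadamard's three-circle theorem, tracking the compounded exponent $\gamma_{0}\theta^{K}$. The only differences from the paper (your direct choice of $I'\subset I$ versus the paper's covering by intervals $I_{j}$ and selection of the one meeting $I$ maximally, and the specific radii $\rho/4<\rho/2<3\rho/4$ in the chaining) are cosmetic.
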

\begin{proof}{}
In light of (\ref{A.49}), we have for all $s\in[-1,1]$,
$$\begin{array}{lll}
\Big|\displaystyle\sum_{k\geq 0}\varphi^{(k)}(s)\frac{1}{k!} (z-s)^{k}\Big|&\leq& \displaystyle\sum_{k\geq 0} M (2\rho)^{-k}|z-s|^{k}.
\end{array}$$
This entails that for all $s\in[-1,1]$  and for all $z\in B(s,\rho)$, we have
the following estimation
\begin{eqnarray}\label{A.51}
 \Big|\displaystyle \sum _{k\geq
0}\varphi^{(k)}(s) \frac{1}{k!}(z-s)^{k}\Big|\leq M \displaystyle\sum _{k\geq 0}
(2\rho)^{-k} \rho^{k}\leq 2M,
\end{eqnarray}
which implies that $\varphi$ can be extended to an holomorphic function in
$D_{\rho}=\cup B(s,\rho)$ for $-1\leq s\leq 1$.  We need first  to construct
a specific open interval in $[-\frac{1}{5},\frac{1}{5}]$ to apply Lemma
\ref{LemmaA.1}. To this end, we notice that
\begin{equation}\label{A.52}
[-1,1]\subset\bigcup_{1\leq j\leq n_{0}}I_{j}=\bigcup_{1\leq j\leq n_{0}}\Big[s_{j}-\frac{\rho}{5},s_{j}+\frac{\rho}{5}\Big[,
\end{equation}
where we have putted  $s_{j}=-1+(2j-1) \rho/5$,
$5/\rho\leq n_{0}\leq 5/\rho+1/2$ and  assumed that
 $I_{j}\displaystyle\cap I_{j'}=\emptyset$, for all $j,\,j'=1,...n_{0},\,\,\,j\neq j'.$
Therefore, the open interval $I$ can be written as  the meeting of
$(I_{j}\cap I),$ for $1\leq j\leq n_{0}$ where
$$
(I_{j}\cap I)\displaystyle\bigcap_{j\neq j'} (I_{j'}\cap
I)=\emptyset,\,\,\mbox {for}\,\, j,\,j'=1,...,n_{0}.
$$
Now, we fix $j_{0}\in\{1,...,n_{0}\}$ such that $|I_{j_{0}}\cap
I|=\displaystyle\max_{1\leq j\leq n_{0}}|I_{j}\cap I|.$
We define  $J_{s_{j_{0}},\rho} =\frac{1}{\rho}(I_{j_{0}}\cap I-s_{j_{0}}).$
In light of (\ref{A.52}) , we deduce  that $J_{{s_{j_{0}}},\rho}$ is an open
interval of $[-\frac{1}{5},\frac{1}{5}]$. Next, we consider the function $g$
defined on $D(0,1)$ as follows
$$
g(z)=\frac{\varphi(s_{j_{0}}+\rho z)}{2M}.
$$
The estimate (\ref{A.51}) entails that $|g(z)|\leq 1$ for $|z|\leq 1$.
Bearing in mind that the function $g$ is holomorphic in the unit disc, we
deduce from Lemma \ref{LemmaA.1} the existence of two constants $N=N(|I|)$
and $\gamma=\gamma(|I|)$ such that the following estimate holds
$$
\|g\|_{L^{\infty}(B(0,1/2))}\leq N
\|g\|_{L^{\infty}(J_{s_{j_{0}},\rho})}^{\gamma}\leq
N(2M)^{-\gamma}\|\varphi\|_{L^{\infty}(I_{j_{0}}\cap I)}.
$$
 This combined with the fact that $\|g\|_{L^{\infty}(B(0,1/2))}=(2M)^{-1}\|\varphi\|_{L^{\infty}(B(s_{j_{0}},\rho/2))}$
yield the following result
\begin{equation}\label{A.54}
\|\varphi\|_{L^{\infty}(B({s_{j_{0}},\rho/2}))}\leq N \|\varphi\|^{\gamma}_{L^{\infty}(I)}M^{1-\gamma}.
\end{equation}
Now, we aim to extend this result to the interval $[-1,1]$. To this end, let
$r>0$, satisfying
\begin{equation}\label{A.55}
\frac{\rho}{2}\leq r\leq 2r\leq \rho.
\end{equation}
Let $(a_{j})_{j\geq 1}=(s_{j})_{j\geq 1}$ be a sequence such that
$
[-1,1]\subset \displaystyle\bigcup_{1\leq j\leq n_{0}}B(a_{j},2r)
$
and satisfying
\begin{equation}\label{A.56'}
\left\{
                              \begin{array}{ll}
                                B(a_{j+1},r)\subset B(a_{j},2r) & \mbox{for } \,\,j\in\{j_{0},...,n_{0}\} \\
\\
                                 B(a_{j-1},r)\subset B(a_{j},2r)& \mbox{ for}\,\,j\in\{1,...,j_{0}\}.
                              \end{array}
                            \right.
\end{equation}
In view of Hamdamard's three-circle theorem, using (\ref{A.54}) and (\ref{A.55}) we
get
\begin{eqnarray}\label{A.57'}
\|\varphi\|_{L^{\infty}(B(a_{j_{0}},2r))}\leq
\|\varphi\|_{L^{\infty}(B(a_{j_{0}},\frac{\rho}{2}))}^{\theta}\|\varphi\|_{L^{\infty}(B(a_{j_{0}},\rho))}^{1-\theta}
\leq N \|\varphi\|_{L^{\infty}(I)}^{\gamma}M^{1-\gamma},
\end{eqnarray}
where $\theta=\frac{\log {\rho/2r}}{\log 2}$. Then, using the fact that $B(a_{j+1},r)\subset B(a_{j},2r)$ for $j\in\{j_{0},...,n_{0}\}$, we deduce
$$\|\varphi\|_{L^{\infty}(B(a_{j_{0}+1},r))}\leq \|\varphi\|_{L^{\infty}(B(a_{j_{0}},2r))}\leq N\|\varphi\|_{L^{\infty}(I)}^{\gamma}M^{1-\gamma}.$$
From this and Hadamard's three-circle theorem, we obtain
$$\|\varphi\|_{L^{\infty}(B(a_{j_{0}+1},2r))}\leq \|\varphi\|_{L^{\infty}(B(a_{j_{0}+1},r))}^{\theta '}\|\varphi\|_{L^{\infty}(B(a_{j_{0}+1},\rho)}^{1-\theta'}\leq N\|\varphi\|_{L^{\infty}(I)}^{\gamma}M^{1-\gamma},$$
 where $\theta'=\frac{\log \rho/2r}{\log \rho/r}$. So, from (\ref{A.56'}) and a repeated application of Hadamard's three circle theorem, we get
$$\|\varphi\|_{L^{\infty}(B(a_{j},2r))}\leq N\|\varphi\|_{L^{\infty}(I)}^{\gamma}M^{1-\gamma},\,\,\,\,\,\,j\in \{j_{0}+2,...n_{0}\}.$$
By a similar way, we prove that
$$\|\varphi\|_{L^{\infty}(B(a_{j},2r))}\leq N\|\varphi\|_{L^{\infty}(I)}^{\gamma}M^{1-\gamma},\,\,\,\,\,\,j\in \{1,...j_{0}\}.$$
As a consequence, we obtain
$$
\begin{array}{lll}
\|\varphi\|_{L^{\infty}([-1,1]))}\leq \displaystyle\sum_{j=1}^{n_{0}}\|\varphi\|_{L^{\infty}(B(a_{j},2r))}\leq N\|\varphi\|^{\gamma}_{L^{\infty}(I)}M^{1-\gamma}.
\end{array}
$$
This completes the proof of the Lemma.
\end{proof}
\subsection{Proof of Lemma \ref{Lemma3.4}}
Notice first that there exists a sequence of open intervals $(I_{j})_{j}$
such that $$E=I_{1}\times...\times I_{j}\times...\times I_{d}\subset
\mathcal{O}\subset B(0,1).$$ Let $x=(x_{1},x_{2},...,x_{d})$ be fixed in
B(0,1). We consider the analytic function $\varphi_{j}$ defined as follows
\begin{equation}\label{A.56}
\varphi_{j}(s)=F(x_{1},...,x_{j-1},s,x_{j+1},...,x_{d}),\,\,\,\,\,\,\,s\in[-1,1].
\end{equation}
Assume  that there exist positive constants $M$ and $\rho$ such that
$$|\varphi_{j}(s)^{(k)}|\leq \frac{M k!}{(2\rho)^{k}},\,\,\,\,\,\,\,\,s\in[-1,1].$$
Then, in view of lemma \ref{LemmaA.2}, we conclude the existence of of
$N=N(\rho,|I_{j}|)$ and $\gamma=\gamma(\rho,|I|) $ such that we have
$$|\varphi_{j}(s)|\leq N\|\varphi_{j}\|_{L^{\infty}(I_{j})}^{\gamma_{j}}M^{1-\gamma_{j}},\,\,\,\,\,\,\,\,\,s\in[-1,1],$$
 This and (\ref{A.56}) yield
\begin{equation}\label{A.66}
|F(x)|\leq N_{j}\sup_{x_{j}\in
I_{j}}|F(x)|^{\gamma_{j}}M^{1-\gamma_{j}}.
\end{equation}
Therefore, by iterating (\ref{A.66}), we get
$$|F(x)|\leq N_{1}N_{2}^{\gamma_{1}}...N_{d}^{\gamma_{1}...\gamma_{d-1}}\sup_{x\in E}|F(x)|^{\gamma_{1}...\gamma_{d}}M^{1-\gamma_{1}...\gamma_{d}}.$$
This completes the of the lemma.


\end{document}